\newtheorem{thm}{Theorem}[section]
\newtheorem{cor}[thm]{Corollary}
\newtheorem{lem}[thm]{Lemma}
\newtheorem{ques}[thm]{Question}
\newtheorem{prop}[thm]{Proposition}
\theoremstyle{definition}
\newtheorem{defn}[thm]{Definition}
\theoremstyle{remark}
\newtheorem{rem}[thm]{Remark}
\numberwithin{equation}{section}
\newcommand{\F}{\mathcal{F}}
\newcommand{\Ff}{\mathbb{F}}
\newcommand{\Hh}{\mathcal{H}}
\newcommand{\Tr}{\mbox{Tr}}
\newcommand{\Res}{\mbox{Res}}
\newcommand{\WP}{\raisebox{.15\baselineskip}{\Large\ensuremath{\wp}}}
\newlist{primenumerate}{enumerate}{1}
\setlist[primenumerate,1]{label={(\arabic*$'$})}
\begin{document}

\title[Moments]{Moments of Traces of Frobenius of Higher Order Dirichlet $L$-functions over $\mathbb{F}_q[T]$}%

\author{Patrick Meisner}
\address{KTH Royal Institute of Technology}
\email{pfmeisner@gmail.com}

\thanks{The author was supported in part by the Verg foundation.}

\begin{abstract}

We study the moments of $\Tr(\Theta_\chi)$ as $\chi$ runs over Dirichlet characters defined over $\Ff_q[T]$ of fixed order $r$. In particular, we show that after an appropriate normalization, the $q$-limit of the power sum moments behave like the power sum moments of the group of unitary matrices multiplied by a weight function.

\end{abstract}
\maketitle

\section{Introduction}

Fix a prime power $q$ and let $\Ff_q[T]$ be the ring of polynomials over the finite field of $q$ elements. Then an $L$-series defined over $\Ff_q[T]$ is a series of the form
\begin{align}\label{LSerDef}
L(u) := \sum_{F \mbox{ monic}} \frac{a_L(F)}{|F|^s} = \sum_{F \mbox{ monic}} a_L(F)u^{\deg(F)}
\end{align}
where $|F|=q^{\deg(F)}$, $u=q^{-s}$ and $a_L(F)\in \mathbb{C}$. If the $a_L(F)$ are chosen such that they capture arithmetic or geometric structure then we call $L$ an $L$-\textit{function}. In this case the properties of this structure (ex. how many primes exhibit this structure) can be understood by analyzing the zeroes of the $L$-functions.

The connection between $L$-functions and primes can be seen through the fact that $L$-functions typically admit an Euler product
\begin{align}\label{EulProd}
    L(u) = \prod_P \left( 1+a_L(P)u^{\deg(P)} + a_L(P^2)u^{2\deg(P)} + \cdots \right)
\end{align}
that converges for $|u|<q^{-1}$ where the product is over all monic, irreducible (prime) polynomials in $\Ff_q[T]$.

The Riemann Hypothesis, which has been proven for a large class of $L$-functions defined over $\Ff_q[T]$ by Weil \cite{wei}, states in part that $L(u)$ will be a polynomial, all of whose roots lie on the half-line $Re(s)=\frac{1}{2}$ or $|u|=q^{-1/2}$. As a result, we may find a unitary matrix $\Theta_L$, called the Frobenius of the $L$-function, such that
\begin{align}\label{FrobDef}
L(u) = \det(1-\sqrt{q}u\Theta_L).
\end{align}

In particular, the zeroes of the $L$-function correspond to the eigenvalues of $\Theta_L$.  Thus the statistics of zeroes of the $L$-function are equivalent to the statistics of the eigenvalues of its Frobenius. 

Katz and Sarnak \cite{KS} developed a philosophy that claims that for every ``natural" family of $L$-functions, $\mathcal{F}$, the Frobenii will equidistribute in some compact matrix Lie group, $G$, as $q$ tends to infinity. Here $G$ is called the \textit{monodromy group} or \textit{symmetry type} of the family. 

More specifically, for any set $S$ and any function $\phi$ on $S$ we use the notation $\left\langle \phi(s)\right\rangle_S := \frac{1}{|S|}\sum_{s\in S} \phi(s)$. Then for any continuous class function of $G$, $f$, the Katz-Sarnak philosophy predicts that there exists a compact matrix Lie group, $G$, such that 
\begin{align}\label{KSEqui}
    \lim_{q\to\infty} \left\langle f(\Theta_L)\right\rangle_{\mathcal{F}}  = \int_{G} f(M) dM
\end{align}
where $dM$ is the corresponding Haar measure of $G$. Further, they predict that $G$ will typically be one of the classical compact Lie groups: $U$, the unitaries; $USp$, the unitary symplectics; $O$, the orthogonal; $SO(even)$, the special orthogonal of even dimension; or $SO(odd)$, the special orthogonals of odd dimension. Sarnak, Shin and Templier \cite{SST} give an explicit description of conditions for a family of $L$-function to exhibit in order to have each symmetry type.


An important subset of continuous class functions are the mixed power trace functions. For every partition\footnote{This notation means that $\lambda$ is the tuple $(1,1,\cdots,2,2\cdots)$ consisting of $\lambda_1$ ones, $\lambda_2$ twos and so on.} $\lambda = 1^{\lambda_1}2^{\lambda_2}\cdots k^{\lambda_k}$ and every unitary matrix $U$, define
$$P_{\lambda}(U) = \prod_{j=1}^k \Tr(U^j)^{\lambda_j},$$
the mixed power trace function associated to $\lambda$. Applying Poisson summation, knowing the right hand side of \eqref{KSEqui} when $f=P_{\lambda}$ for all $\lambda$ allows us to compute the $n$-level densities of our family: a measure of the $n$-tuples of zeroes near the real-line. Hence, this paper will focus only on the functions $P_\lambda$.

The right hand side of \eqref{KSEqui} with $f = P_\lambda$ is well studied for the compact classical matrix Lie groups (\cite{DE,DS,Ram}). Therefore, the typical method of proving a statement as in \eqref{KSEqui} for a given family of $L$-function $\F$ is to use number theoretic tools to compute the left hand side and see which group $G$ matches.

The prototypical example of a family with symmetry type $USp$ is that of $L$-functions attached to quadratic Dirichlet characters whereas the prototypical example of a family with symmetry type $O$, $SO(even)$, $SO(odd)$ is that of $L$-functions attached to elliptic curves with mixed, positive and negative root numbers, respectively. See \cite{KS,Mil,Rud,You} for examples of these families and symmetry types.

These are examples of families of $L$-functions that we will say have ``quadratic structure"\footnote{Quadratic structure is not a well defined term. We use it merely to try and illustrate the difference between the more classical families of $L$-functions and the ones we wish to study here.}. If a family does not have ``quadratic structure" then we expect them to have unitary symmetry type. This is slightly disappointing as we would then see no difference in the statistics of the zeroes between two different families even if they may have differing ``higher order structures".

This paper is devoted to refining \eqref{KSEqui} for certain families with ``higher order structure". That is, by analyzing the multiplication of the scalars from the unit circle and utilizing the invariance of the Haar measure, one can show that  
\begin{align}\label{MatInt}
\int_{U(N)} P_\lambda(U) dU =0
\end{align}
for all partitions $\lambda$ (see \cite{DS,DE}). We then expect for all families of $L$-functions, $\F$, with no ``quadratic structure" that
\begin{align}\label{qlim0}
    \lim_{q\to\infty} \left\langle P_{\lambda}(\Theta_L)\right\rangle_{\F} =0.
\end{align}

Some question that can then be asked are:
\begin{enumerate}
\item Can we determine how fast \eqref{qlim0} tends to $0$ as $q$ tends to infinity?
\item Can we normalize \eqref{qlim0} in a natural way to get a non-zero limit?
\item If so, can we then express this non-zero limit as a matrix integral? 
\end{enumerate} 
We show in Theorem \ref{MainThm} the answer to these three question for the family of Dirichlet characters of fixed order is: yes. 

\subsection{Statement of Main Result}

Fix $r\geq 2$ an integer such that $q\equiv 1 \bmod{2r}$. For any $r$-th power free $G\in \Ff_q[T]$, let
\begin{align}\label{CharDef}
    \chi_G(F) = \left(\frac{F}{G}\right)_{r}
\end{align}
be the $r$-th power residue symbol for $\Ff_q[T]$ modulo $G$. Note that for this residue symbol to exist we need $q\equiv 1 \bmod{r}$. We impose the stronger condition $q\equiv 1 \bmod{2r}$ to make the $r$-th power reciprocity easier. Then we define the Dirichlet $L$-function attached to $\chi_G$ as  
\begin{align}\label{LFunDef}
L(u,\chi_G) := \sum_{F \mbox{ monic} } \chi_G(F) u^{\deg(F)}.
\end{align}
This will be a polynomial of degree $\deg(G)$. Hence $\Theta_L$ will be a $\deg(G)\times \deg(G)$ unitary matrix. The family we are interested in is then
\begin{align}\label{FamDef}
\mathcal{F}_{r}(N) := \{L(u,\chi_G) :  G \mbox{ is $r$-th power free}, \deg(G)=N\}
\end{align}
and we wish to determine the expected value of $P_\lambda(\Theta_L)$ as $L$ ranges over $\F_r(N)$.

Various statistics for the family quadratic characters ($r=2$) has been studied by many authors; see for example \cite{AK,BF,BF2,OS,Rud,Sound} as well as many others. The statistics for the family of higher order characters ($r>2$) is less well known. However, there has been some recent progress towards studying them \cite{BCDGL,CP,DFL,EP}. The author has already considered simpler statistics for a similar, more geometric, family in \cite{Meis} extending those result of \cite{BCDGL}.





\begin{thm}\label{MainThm}
For any $N\geq r\geq 2$ such that $q\equiv 1 \bmod{2r}$ and partition $\lambda= 1^{\lambda_1}2^{\lambda_2}\cdots k^{\lambda_k}$ such that $|\lambda| := \lambda_1+2\lambda_2+\cdots+k\lambda_k<\frac{rN}{2r-2}$, we get
$$\lim_{q\to\infty} \left\langle q^{\frac{r-2}{2r}|\lambda|} P_{\lambda}(\Theta_L)\right\rangle_{\mathcal{F}_{r}(N)} = \int_{U(N)} P_{\lambda}(U) \overline{\omega_r(U)} dU$$
where 
$$\omega_{r}(U) = \prod_{1\leq i_1<i_2<\dots <i_r \leq N} \left(1-x_{i_1}x_{i_2}\cdots x_{i_r}\right)^{(-1)^{r+1}}$$
and the $x_i$ are the eigenvalues of $U$. 
\end{thm}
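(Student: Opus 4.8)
The plan is to expand $P_\lambda(\Theta_L)$ in terms of the coefficients of $L(u,\chi_G)$ and reduce everything to computing averages of $\chi_G$ over monic polynomials. First I would use the Newton-type identity relating $\Tr(\Theta_L^j)$ to the polynomial coefficients: writing $L(u,\chi_G)=\sum_{F \text{ monic}}\chi_G(F)u^{\deg F}$ and comparing with $\det(1-\sqrt{q}u\Theta_L)=\exp\left(-\sum_{j\geq 1}\Tr(\Theta_L^j)q^{j/2}u^j/j\right)$, one gets that $\sqrt{q}^{\,j}\Tr(\Theta_L^j)$ is (up to sign) the sum $\sum_{\deg F=j,\ F\text{ monic}}\chi_G(F)$ corrected by lower-degree contributions coming from products of primes — i.e. the ``von Mangoldt'' version $-\sum_{\deg F=j}\Lambda(F)\chi_G(F)$ where $\Lambda$ is the polynomial von Mangoldt function. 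So $P_\lambda(\Theta_L)$ becomes a fixed polynomial (independent of $q$, depending only on $\lambda$) in the quantities $\psi_j(\chi_G):=\sum_{\deg F=j}\Lambda(F)\chi_G(F)$, with the leading term being $\prod_j(-q^{-j/2}\psi_j(\chi_G))^{\lambda_j}$. Because $|\lambda|<\frac{rN}{2r-2}$, after multiplying by $q^{\frac{r-2}{2r}|\lambda|}$ only the ``main'' terms — those where each $\psi_j$ is essentially replaced by the prime sum $\sum_{\deg P=j}(\deg P)\chi_G(P)$, or better yet by the full monic sum — survive in the $q$-limit; everything with a prime power $P^e$, $e\geq 2$, or with cross terms, is lower order.

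The core computation is then $\left\langle \prod_{a}\chi_G(F_a)\right\rangle_{\mathcal{F}_r(N)}$ for fixed monic $F_a$, as $G$ ranges over $r$-th power free polynomials of degree $N$. Here I would invoke $r$-th power reciprocity (this is why $q\equiv 1\bmod 2r$ is imposed) to flip $\chi_G(F_a)=\left(\frac{F_a}{G}\right)_r$ into $\left(\frac{G}{F_a}\right)_r$ up to a root of unity, turning the average over $G$ into a character sum over the residue of $G$ modulo $\prod F_a$. By orthogonality of characters in $(\Ff_q[T]/\prod F_a)^\times$, this average tends (as $q\to\infty$, with $N$ fixed and all $\deg F_a$ bounded) to $1$ if $\prod_a F_a$ is an $r$-th power in $\Ff_q[T]$ and to $0$ otherwise — the $r$-th power free sieve on $G$ contributing only a harmless constant factor $(1-1/q)\cdots\to 1$. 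Reassembling: in the generating-function variables, $\left\langle\prod_j\exp\left(t_j\cdot(-q^{-j/2})\sum_{\deg F=j,\ F\text{ monic}}\chi_G(F)\right)\right\rangle$ converges to the ``diagonal'' contribution counting tuples whose product is an $r$-th power, which is exactly the combinatorial content of $\int_{U(N)}\exp\left(\sum_j t_j\Tr(U^j)\right)\overline{\omega_r(U)}\,dU$: expanding $\overline{\omega_r(U)}=\prod_{i_1<\dots<i_r}(1-\overline{x_{i_1}\cdots x_{i_r}})^{(-1)^{r+1}}$ and using the known moments $\int_{U(N)}\prod_j\Tr(U^j)^{a_j}\overline{\Tr(U^j)^{b_j}}\,dU$ produces precisely the same ``product is an $r$-th power'' constraint (each $r$-fold eigenvalue product $x_{i_1}\cdots x_{i_r}$ plays the role of one unit of ``$r$-th power slack''). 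I would make this matching precise by checking it on power sums: verify that for $|\lambda|,|\mu|<\frac{rN}{2r-2}$,
$$\int_{U(N)} P_\lambda(U)\overline{P_\mu(U)\,\omega_r(U)}\,dU$$
equals the count of ways to pair the $\lambda$-cycles and $\mu$-cycles together with $r$-fold ``$\omega_r$-cycles'' so that everything cancels — the same bookkeeping that arises from the reciprocity/orthogonality step on the number theory side.

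The degree restriction $|\lambda|<\frac{rN}{2r-2}$ enters in two places that I would track carefully: it guarantees (via Weil's bounds / the explicit size of the relevant character sums) that the error terms from non-squarefull $G$ and from prime powers are $o(q^{-\frac{r-2}{2r}|\lambda|})$, and it guarantees on the matrix side that the truncation of $\omega_r(U)$ to its low-degree part (the only part that matters when integrated against $P_\lambda$ of bounded degree) is exact — i.e. no eigenvalue relations of the form $x_{i_1}\cdots x_{i_N}$ (the determinant) interfere. \textbf{The main obstacle} is the uniform control of the averages $\left\langle\prod_a\chi_G(F_a)\right\rangle_{\mathcal{F}_r(N)}$ when some $F_a$ share common factors or fail to be coprime to $G$: the reciprocity law must be applied with care to the $r$-th-power-free sieve, and one must show that the ``bad'' polynomials $G$ (sharing a factor with some $F_a$, or for which the reciprocity root of unity is non-trivial after averaging) contribute negligibly — this is exactly the step where the hypothesis $q\equiv1\bmod 2r$ and the bound on $|\lambda|$ are used in tandem, and where one most easily makes an error by dropping a lower-order term that is in fact of the same order as the main term. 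I expect the rest (the purely combinatorial identification with the $U(N)$ integral against $\overline{\omega_r}$) to be routine given the generating-function setup above.
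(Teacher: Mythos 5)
Your overall plan is the same as the paper's: the exact trace formula $\Tr(\Theta_G^n)=-q^{-n/2}\sum_{\deg F=n}\Lambda(F)\chi_G(F)$, reciprocity (exact, with no root of unity, thanks to $q\equiv1\bmod 2r$) to flip $\chi_G(F)$ into $\chi_F(G)$, a split into the ``product is an $r$-th power'' main term and a character-sum error term, and finally an identification of the resulting combinatorics with $\int_{U(N)}P_\lambda\overline{\omega_r}\,dU$ via the Diaconis--Shahshahani/Evans moment formulas. Your route through the random matrix side (verify the claimed $\omega_r$ by expanding it against power sums) is the mirror image of the paper's (build $\omega$ from the computed moments using the orthogonality $\int_{U(N)}P_\lambda\overline{P_\mu}\,dU=\delta_{\lambda\mu}z_\lambda$ and then resum it into the product over $r$-tuples of eigenvalues); these are equivalent in content.

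Three concrete soft spots. First, ``orthogonality of characters in $(\Ff_q[T]/\prod F_a)^\times$'' with the $r$-free sieve as ``a harmless constant factor'' is not enough to reach the stated range: because $G$ is restricted to $r$-th power free polynomials, exact orthogonality is broken, and the elementary sieve-plus-orthogonality bound is weaker than what is needed. The paper instead bounds $\sum_{G\in\Hh_r(N)}\chi_F(G)$ by writing its generating function as $\prod_{P\mid F}(1-u^{r\deg P})^{-1}\,L(u,\chi_F)/\zeta_q(u^r)$ and invoking the Riemann Hypothesis for $L(u,\chi_F)$ on the circle $|u|=q^{-1/2}$, giving $\ll 2^{\deg F}q^{N/2}$; it is exactly this square-root saving, against the $q^{(1-1/r)|\lambda|}$ total weight of the off-diagonal tuples, that produces the hypothesis $|\lambda|<\frac{rN}{2r-2}$. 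You gesture at Weil bounds later, but this step must carry the whole error analysis. Second, your sentence that the surviving terms are ``those where each $\psi_j$ is essentially replaced by the prime sum'' is backwards for $r>2$: the genuine prime terms ($e=1$) give non-trivial characters and are part of the error term, while the main term consists precisely of prime-power configurations whose full product $\prod_j P_j^{n_j/\deg P_j}$ is an exact $r$-th power (so necessarily involving exponents $\geq2$ or repeated primes across factors); you state the correct criterion later, but the two statements are inconsistent. Third, the ``routine'' combinatorial matching is where most of the work lies: in the $q$-limit only those configurations contribute in which each block uses a prime of degree exactly $N_i/r$ (yielding the weights $(N_i/r)^{|J_i|-1}$ via the prime-counting lemma), one must count set partitions of each type, and one must prove the power-sum identity $\sum_{\mu\vdash r}\prod_k\frac{(-1)^{\mu_k}}{k^{\mu_k}\mu_k!}P_\mu(U)=(-1)^r e_r(x_1,\dots,x_N)$, which after exponentiating over the dilates $j\mu$ is what turns the moment data into the product $\prod_{i_1<\cdots<i_r}(1-x_{i_1}\cdots x_{i_r})^{(-1)^{r+1}}$. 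None of this is conceptually hard, but it is the substance of the proof rather than an afterthought.
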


\begin{rem}
The statement of Theorem \ref{MainThm} for $r=2$ may seem contradictory as it is known that in this case we should get a matrix integral over the symplectics on the right hand side \cite{KS,Rud}. However, there is no contradiction here as we show in Section \ref{QuadStruFam} that the matrix integral over the unitaries can be written as a matrix integral over the symplectics.
\end{rem}


The method to proving Theorem \ref{MainThm} is to use the theory of $L$-functions to write the expected value over $\F_r(N)$ combinatorially. This is written explicitly in Theorem \ref{CombThm}. We delegate writing the left hand side of Theorem \ref{MainThm} explicitly to Section \ref{CombSec} as the notation needed to write the theorem is outside the scope of an introduction. We then use results due to Diaconis and Evans \cite{DE} to construct a weight function such that Theorem \ref{MainThm} holds. Section \ref{WtFuncSec} is devoted to showing that this weight function is what we claim it to be.


This is now consistent with the philosophy that families with no quadratic structure should have unitary symmetry type. Indeed, when $r>2$, then it is reasonable to say that our family $\F_r(N)$ does not have quadratic structure as they are attached to characters of order higher than $2$. Moreover, we see that in the case $r>2$ then $q^{\frac{r-2}{2r}|\lambda|}$ tends to infinity with $q$ from which we may then conclude that the left hand side of \eqref{KSEqui} will be $0$ in this situation. This leads to an immediate Corollary.

\begin{cor}\label{UnitCor}
For any $N\geq r>2$ and partition $\lambda$ such that $|\lambda|<N$, we get
$$\lim_{q\to\infty} \left\langle P_\lambda(\Theta_L)\right\rangle _{\F_r(N)} = \int_{U(N)} P_\lambda(U)dU.$$
\end{cor}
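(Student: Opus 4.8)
The plan is to derive Corollary~\ref{UnitCor} directly from Theorem~\ref{MainThm} by a limiting argument. Fix $N > 2$, $r > 2$ and a partition $\lambda$ with $|\lambda| < N$. First I would check that the hypothesis of Theorem~\ref{MainThm} is met: since $r > 2$ we have $\frac{rN}{2r-2} > \frac{N}{2}\cdot\frac{r}{r-1}$, but more to the point $\frac{rN}{2r-2} \geq \frac{3N}{4}$ is not quite enough, so one should instead simply observe that $\frac{rN}{2r-2} > N$ fails in general; the correct move is to note $|\lambda| < N$ together with $r \geq 3$ gives $|\lambda| < N \leq \frac{rN}{2r-2}$ precisely when $r \geq 3$, because $\frac{r}{2r-2} \geq \frac{1}{2}$ is the wrong bound — rather $\frac{r}{2r-2} > \frac{1}{2}$ holds iff $r > 2(r-1)$, i.e. never for $r\geq 2$. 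So in fact $\frac{rN}{2r-2} < N$ for $r > 2$, and the hypothesis $|\lambda| < \frac{rN}{2r-2}$ is \emph{stronger} than $|\lambda| < N$. Hence I would restate: the hypothesis "$|\lambda| < N$" in the corollary should be read as being subsumed under "$|\lambda| < \frac{rN}{2r-2}$" — or, more honestly, the corollary as stated needs $|\lambda| < \frac{rN}{2r-2}$, and I would either prove it under that hypothesis or remark that for the intended application ($|\lambda|$ small, $N$ large) there is no loss. Assuming $|\lambda| < \frac{rN}{2r-2}$, Theorem~\ref{MainThm} applies verbatim.

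Granting the applicability of Theorem~\ref{MainThm}, the core of the argument is the following. Write $c := \frac{r-2}{2r}|\lambda|$, which is a \emph{strictly positive} rational number since $r > 2$ and $|\lambda| \geq 1$. Theorem~\ref{MainThm} gives
$$\lim_{q\to\infty} \left\langle q^{c}\, P_{\lambda}(\Theta_L)\right\rangle_{\mathcal{F}_{r}(N)} = \int_{U(N)} P_{\lambda}(U)\,\overline{\omega_r(U)}\,dU =: I,$$
a finite complex number (finite because $\omega_r$ is a continuous function on the compact group $U(N)$ and $P_\lambda$ is continuous, so the integrand is bounded). Since $q^{c} \to \infty$ as $q \to \infty$ through prime powers $\equiv 1 \bmod 2r$, dividing by $q^{c}$ forces
$$\lim_{q\to\infty}\left\langle P_{\lambda}(\Theta_L)\right\rangle_{\mathcal{F}_r(N)} = \lim_{q\to\infty} \frac{\left\langle q^{c} P_\lambda(\Theta_L)\right\rangle_{\mathcal{F}_r(N)}}{q^{c}} = \frac{I}{+\infty} = 0.$$
To make this rigorous I would argue: for any $\varepsilon > 0$, there is $Q$ so that $|\langle q^c P_\lambda(\Theta_L)\rangle - I| < 1$ for $q > Q$, hence $|\langle P_\lambda(\Theta_L)\rangle| \leq (|I|+1)/q^c$, and the right side $\to 0$. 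Finally, by equation~\eqref{MatInt} in the introduction (the Diaconis--Shahshahani / Diaconis--Evans vanishing of $\int_{U(N)} P_\lambda(U)\,dU$ for all partitions $\lambda$), the limit $0$ equals $\int_{U(N)} P_\lambda(U)\,dU$, which is exactly the asserted identity.

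I do not expect any genuine obstacle here: the corollary is a soft consequence of the theorem, the only subtlety being bookkeeping on the exponent and the range of $\lambda$. The one point requiring care is the interplay of hypotheses noted above — ensuring the constraint "$|\lambda| < N$" is compatible with (indeed implied by, in the regime of interest, or else replaced by) the constraint "$|\lambda| < \frac{rN}{2r-2}$" needed to invoke Theorem~\ref{MainThm}. A second, more cosmetic, point is that one should confirm the family $\mathcal{F}_r(N)$ is nonempty for all large $q$ so that the averages $\langle\cdot\rangle_{\mathcal{F}_r(N)}$ are well-defined; this is immediate since there are $\asymp q^N$ monic $r$-th-power-free polynomials of degree $N$ once $N \geq r$. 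With these remarks in place the proof is three lines: apply Theorem~\ref{MainThm}, divide by $q^{\frac{r-2}{2r}|\lambda|} \to \infty$, and identify the resulting $0$ with the Haar integral via~\eqref{MatInt}.
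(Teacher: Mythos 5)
There is a genuine gap, and you in fact flagged it yourself but then resolved it the wrong way. Your argument invokes Theorem \ref{MainThm} as a black box, so it only works when $|\lambda|<\frac{rN}{2r-2}$; since $\frac{rN}{2r-2}<N$ for $r>2$, this is a strictly smaller range than the corollary's hypothesis $|\lambda|<N$, and your proposed remedy (replace ``$|\lambda|<N$'' by ``$|\lambda|<\frac{rN}{2r-2}$'') proves a weaker statement, not Corollary \ref{UnitCor} as stated. The enlarged range is not an oversight in the paper: the remark following the corollary explains that it comes from re-running the error analysis \emph{without} the normalization. Concretely, one goes back to the decomposition $\langle P_\lambda(\Theta_L)\rangle_{\F_r(N)}=q^{-\frac{r-2}{2r}|\lambda|}\bigl(MT_\lambda(N)+ET_\lambda(N)\bigr)$. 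By Proposition \ref{ErrTermProp}, $q^{-\frac{r-2}{2r}|\lambda|}ET_\lambda(N)\ll q^{(1-\frac1r)|\lambda|-\frac{r-2}{2r}|\lambda|-\frac N2}=q^{\frac{|\lambda|-N}{2}}$, which tends to $0$ precisely under the weaker hypothesis $|\lambda|<N$; and $MT_\lambda(N)=MT_\lambda+O(q^{-1})$ with $MT_\lambda=O(1)$, so the main term is killed by the factor $q^{-\frac{r-2}{2r}|\lambda|}\to 0$ (this is where $r>2$ and $\lambda$ nonempty enter). Hence the left side is $0$, and the right side is $0$ by \eqref{MatInt}, giving the identity. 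So the correct proof requires opening up the proof of Theorem \ref{MainThm}, not merely dividing its conclusion by $q^{\frac{r-2}{2r}|\lambda|}$.

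Within the restricted range $|\lambda|<\frac{rN}{2r-2}$ your soft argument is fine and coincides with half of the paper's logic (both sides vanish, the right side via the Diaconis--Shahshahani/Diaconis--Evans vanishing \eqref{MatInt}); the quantitative $\varepsilon$-bookkeeping and the nonemptiness of $\F_r(N)$ are not issues. Also, your opening paragraph's chain of inequalities is garbled before arriving at the correct comparison $\frac{rN}{2r-2}<N$; in a final write-up state that comparison directly and then address the range discrepancy by the error-term argument above rather than by weakening the hypothesis.
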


The proof of Corollary \ref{UnitCor} is just to show that both sides of the equation therein is $0$. Note that we get an improved range of $|\lambda|<N$. This is due to the fact that the error term in the computation of Theorem \ref{MainThm} is bounded by some power of $q$. The bound of $|\lambda|$ is then taken so that this power is negative. Removing the normalization allows us to take $|\lambda|$ larger with the respective power still being negative.

\subsection{Quadratic Structure Families}\label{QuadStruFam}

We see that $\F_2(N)$ is one of the families we noted had ``quadratic structure". Therefore, we should expect it behaves more like either the symplectics or the orthogonals. Indeed, it is shown in \cite{KS} that for any continuous class function $f$, 
\begin{align}\label{KSr=2}
    \lim_{q\to \infty} \left\langle f(\Theta_L) \right\rangle_{\F_2(2N)} = \int_{USp(2N)} f(U)dU.
\end{align}

Since in the case of $r=2$, the normalization factor in Theorem \ref{MainThm} is just $1$, we see that the right hand side of Theorem \ref{MainThm} falls in the purview of \eqref{KSr=2}. In this light, it may seem odd then that we write the statistics of the family $\F_2(2N)$ in terms of the unitary matrices as it is know that they are all controlled by those of the  symplectics.

One conclusion that can be reached here is that the integral appearing in Theorem \ref{MainThm} and that in \eqref{KSr=2} are equal when $f=P_{\lambda}$ and $|\lambda| \leq 2N$. Indeed, one may prove exactly this directly without relying on either Theorem \ref{MainThm} or \eqref{KSr=2}.


\begin{thm}\label{ThmSymp}
For any positive integer $N$ and any partition $\lambda$ with $|\lambda|<2N$
$$\int_{U(2N)} P_\lambda(U) \overline{\omega_{Sp}(U)}  dU = \int_{USp(2N)} P_\lambda(U) dU$$
where $\omega_{Sp}(U) = \omega_2(U)$, as in Theorem \ref{MainThm}.
\end{thm}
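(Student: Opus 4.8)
The plan is to evaluate both sides as the same explicit combinatorial sum over partitions: classical symmetric-function identities for the left-hand integral, and the Diaconis--Evans moment formula for the symplectic group on the right.

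First I would rewrite the left-hand side in symmetric-function language. Writing $x_1,\dots,x_{2N}$ for the eigenvalues of $U\in U(2N)$ we have $P_\lambda(U)=p_\lambda(x)$, the power-sum symmetric function, while Littlewood's identity gives
$$\omega_{Sp}(U)=\prod_{1\le i<j\le 2N}(1-x_ix_j)^{-1}=\sum_{\kappa\,:\,\kappa'\text{ even}}s_\kappa(x),$$
the sum running over the partitions $\kappa$ each of whose columns has even length. Since $|\lambda|<2N$, every $\kappa\vdash|\lambda|$ has fewer than $2N$ parts, so the Schur functions in the degree-$|\lambda|$ part are genuine irreducible characters of $U(2N)$ and are orthonormal for Haar measure; combining this with the Frobenius expansion $p_\lambda=\sum_{\kappa\vdash|\lambda|}\chi^\kappa(\lambda)\,s_\kappa$, where $\chi^\kappa$ is the corresponding symmetric-group character, yields
$$\int_{U(2N)}P_\lambda(U)\,\overline{\omega_{Sp}(U)}\,dU=\sum_{\substack{\kappa\vdash|\lambda|\\ \kappa'\text{ even}}}\chi^\kappa(\lambda).$$
Equivalently this is the Hall pairing $\langle p_\lambda,\prod_{i<j}(1-x_ix_j)^{-1}\rangle$, which one may instead compute via $\prod_{i<j}(1-x_ix_j)^{-1}=\exp\bigl(\sum_{k\ge1}\tfrac{p_k^2-p_{2k}}{2k}\bigr)$ together with $\langle p_\mu,p_\nu\rangle=z_\mu\delta_{\mu\nu}$, obtaining the Gaussian moment $\mathbb{E}\bigl[\prod_j(\sqrt j\,\eta_j+c_j)^{\lambda_j}\bigr]$ with $\eta_j$ independent standard normal, $c_j=-1$ for $j$ even and $c_j=0$ for $j$ odd.

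Next I would identify the right-hand side. By the Diaconis--Evans description of the joint distribution of $(\Tr U,\Tr U^2,\dots)$ on the classical groups \cite{DE}, for $|\lambda|<2N$ one has $\int_{USp(2N)}P_\lambda(U)\,dU=\mathbb{E}\bigl[\prod_j(\sqrt j\,\eta_j+c_j)^{\lambda_j}\bigr]$ with exactly the same constants $c_j$ (the bias $c_j=-1$ at even $j$ is already visible in $\langle\Tr U^j\rangle_{USp(2N)}=-1$), and comparison with the previous paragraph gives the claim. Alternatively one can keep everything combinatorial: $\int_{USp(2N)}s_\kappa(U)\,dU$ equals $1$ when $\kappa'$ is even and $0$ otherwise (Littlewood's $GL$-to-$Sp$ restriction rule, in the relevant range of $\kappa$), so $\int_{USp(2N)}P_\lambda(U)\,dU=\int_{USp(2N)}p_\lambda\,dU=\sum_{\kappa\vdash|\lambda|,\ \kappa'\text{ even}}\chi^\kappa(\lambda)$, matching the left side directly.

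The symmetric-function bookkeeping is routine; the points needing care are the ranges of validity. I must check that $|\lambda|<2N$ really suffices both for the orthonormality/restriction statements on $U(2N)$ and $USp(2N)$ and for the Diaconis--Evans formula on $USp(2N)$ to be an exact identity rather than merely an asymptotic one — this works precisely because every partition of $|\lambda|$ has at most $|\lambda|<2N$ parts. A minor additional point is that $\omega_{Sp}(U)$ has a mild pole on the measure-zero locus where a pair of eigenvalues satisfies $x_ix_j=1$; this is integrable against Haar measure thanks to the vanishing of the Vandermonde factor in the Weyl integration formula, so the left-hand integral is well defined and the manipulations above are legitimate. I expect the main obstacle to be extracting the symplectic moment formula from \cite{DE} in precisely the range $|\lambda|<2N$ and with the correct normalizing constants $c_j$, rather than anything in the combinatorics.
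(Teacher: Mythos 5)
Your argument is correct in substance, but it takes a genuinely different route from the paper. The paper never expands the weight in Schur functions: it takes the explicit Diaconis--Shahshahani/Diaconis--Evans value $A_\lambda=\prod_j(-1)^{(j-1)\lambda_j}g_j(\lambda_j)$ of the symplectic moment, feeds it into Proposition \ref{WeightProp} (orthogonality of the mixed power traces $P_\mu$ on $U(2N)$) to produce a weight $w_{Sp}(U)=\sum_\mu \frac{A_\mu}{z_\mu}P_\mu(U)$ with the required integrals, and then resums this series by an exponential generating-function computation to recognize it as $\prod_{i<k}(1-x_ix_k)^{-1}=\omega_2(U)$. You instead identify both sides with the single combinatorial quantity $\sum_{\kappa\vdash|\lambda|,\ \kappa'\ \text{even}}\chi^\kappa(\lambda)$: on the unitary side via Littlewood's identity and Schur orthonormality (legitimate since every $\kappa\vdash|\lambda|$ has $\ell(\kappa)\le|\lambda|<2N$ rows), and on the symplectic side either via the Diaconis--Evans Gaussian description (the same external input the paper uses, with the correct constants $c_j=-1$ for $j$ even) or, better, via $\int_{USp(2N)}s_\kappa\,dU=1$ if $\kappa'$ is even and $0$ otherwise, i.e.\ the sphericity of the pair $(GL(2N),Sp(2N))$. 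This second version is arguably cleaner than the paper's own proof: it needs no explicit $g_j$'s and no resummation, and it yields the full stated range $|\lambda|<2N$ directly, whereas the paper invokes the symplectic moment formula that it only states for $|\lambda|\le N$.

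One claim you should repair: the integrability of $\omega_{Sp}$ is not rescued by the Vandermonde factor, which does not vanish on the singular locus $x_ix_j=1$ (there $x_j=\overline{x_i}$, not $x_j=x_i$); already on $U(2)$ the integral of $|1-x_1x_2|^{-1}$ against Haar measure diverges logarithmically, so the left-hand side is not absolutely convergent. It must be read as termwise integration of the expansion of $\omega_{Sp}$ (Littlewood or power-sum) against $P_\lambda$ --- which is exactly how the paper itself implicitly treats it in Proposition \ref{WeightProp}, also without comment --- so this does not put you behind the paper's standard of rigor, but your stated justification is incorrect as written and should be replaced by saying explicitly that the pairing is defined degree-by-degree through the expansion.
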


This can easily be checked to be true as the right hand side was calculated by \cite{DS}. Then we can use the same method as in Section \ref{WtFuncSec} to use these results to construct a weight function that satisfies Theorem \ref{ThmSymp} and then prove that it has the appropriate form. 

As well as computing the expected value for $P_\lambda(U)$ for $USp(2N)$, the expected value over $O(N)$, the group of orthogonal matrices, was also computed in \cite{DS}. In the same way, we can then find a weight function that has the same property but for the orthogonal group. 

\begin{thm}\label{ThmOrth}
For any positive integer $N$ and any partition with $|\lambda|<N$
$$\int_{U(N)} P_{\lambda}(U) \overline{w_O(U)} dU = \int_{O(N)} P_{\lambda}(U) dU$$
where
$$w_O(U) = \prod_{1\leq i\leq j\leq N} \frac{1}{ 1-x_ix_j}$$
and the $x_i$ are the eigenvalues of $U$.
\end{thm}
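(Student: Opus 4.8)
The plan is to mirror the strategy used for the symplectic case in Theorem \ref{ThmSymp}: reduce both sides to explicit combinatorial quantities and match them. Concretely, the right-hand side $\int_{O(N)} P_\lambda(U)\,dU$ was computed by Diaconis and Shahshahani \cite{DS}; for $|\lambda| < N$ it equals the number of ways to match up the power-sum factors so that the result is a class function of $O(N)$, which works out to a product over the parts of $\lambda$ of certain explicit constants (involving double factorials for the even parts, reflecting the fact that $\Tr(U^j)$ for $U \in O(N)$ behaves in the limit like a Gaussian of variance depending on the parity of $j$, shifted by $\pm 1$). The point of the inequality $|\lambda| < N$ is exactly that in this range these moments are \emph{stable}, i.e. independent of $N$ and given by the limiting Gaussian formula. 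So the first step is to record this formula precisely from \cite{DS}.

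Next I would expand the left-hand side. Writing $w_O(U)^{-1}$... rather, note $\overline{w_O(U)} = \prod_{i\le j}(1-\bar x_i \bar x_j)^{-1}$, and since the $x_i$ lie on the unit circle, on the spectrum of $U(N)$ this is $\prod_{i \le j}(1 - x_i^{-1}x_j^{-1})^{-1}$. The key identity is that $\prod_{1\le i\le j\le N}(1-x_ix_j)^{-1}$ is the generating function $\sum_\mu s_\mu(x_1,\dots,x_N)$ over partitions $\mu$ with all columns of even length (equivalently, the sum of Schur functions $s_\mu$ where $\mu'$ has only even parts), by the classical Littlewood identity. Therefore $\overline{w_O(U)} = \sum_\mu \overline{s_\mu(U)}$ over that set of $\mu$, and
$$\int_{U(N)} P_\lambda(U)\,\overline{w_O(U)}\,dU = \sum_{\mu} \int_{U(N)} P_\lambda(U)\,\overline{s_\mu(U)}\,dU = \sum_{\mu} \langle P_\lambda, s_\mu\rangle,$$
which by the standard expansion $P_\lambda = \sum_\nu \chi^\nu_\lambda s_\nu$ (character of the symmetric group) picks out $\sum_\mu \chi^\mu_\lambda$, the sum of symmetric-group character values $\chi^\mu_\lambda$ over partitions $\mu \vdash |\lambda|$ whose conjugate has only even parts. (One must check the truncation: for $|\lambda| < N$ every such $\mu$ has at most $N$ rows, so no Schur function vanishes identically on $U(N)$ and the orthogonality is exact; this is where the hypothesis is used.)

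The final step is a purely combinatorial identity: $\sum_{\mu:\ \mu' \text{ even}} \chi^\mu_\lambda$ equals the Diaconis–Shahshahani orthogonal moment $\int_{O(N)} P_\lambda\,dU$. This is a known branching/Frobenius-type identity — it is essentially the statement that the multiplicity of the trivial representation of $O(N)$ (or rather the relevant Littlewood restriction rule $GL_N \downarrow O_N$) in the $GL_N$-irreducible $V_\nu$ is $1$ precisely when $\nu$ has even column lengths, combined with Schur–Weyl-type bookkeeping — and in the stable range $|\lambda| < N$ it reduces to an identity of symmetric-group characters independent of $N$. I expect this last identity to be the main obstacle: one has to either cite the precise form of Littlewood's restriction rule (and verify it applies with no correction terms in the range $|\lambda| < N$, which is exactly Littlewood's stability condition) or give a direct bijective/generating-function proof matching the double-factorial count of \cite{DS}. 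Everything else is formal manipulation of symmetric functions and Weyl integration. As with Theorems \ref{ThmSymp} and \ref{MainThm}, an alternative and perhaps cleaner route is to invoke the Diaconis–Evans machinery \cite{DE} used in Section \ref{WtFuncSec}: verify directly that the claimed $w_O$ has the right Fourier–Schur coefficients so that multiplication by it implements the $U(N)\to O(N)$ change of measure on the span of the $P_\lambda$ with $|\lambda|<N$, which again comes down to the same Littlewood identity.
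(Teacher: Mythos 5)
Your route is genuinely different from the paper's. The paper never expands $w_O$ in Schur functions: it takes the Diaconis--Shahshahani values $\int_{O(N)}P_\lambda\,dU=\prod_j g_j(\lambda_j)$ as target coefficients, invokes Proposition \ref{WeightProp} (Diaconis--Evans orthogonality of the $P_\lambda$ on $U(N)$) to define a weight with exactly those coefficients, and then resums the resulting series as an exponential, exactly as in Section \ref{SympWtSec}, arriving at $\exp\bigl(\tfrac12\sum_j (P_{(j)}^2+P_{(2j)})/j\bigr)=\prod_{1\le i\le k\le N}(1-x_ix_k)^{-1}$. Your plan --- expand $\overline{w_O}$ by Littlewood's identity into Schur functions, use Schur orthogonality on $U(N)$ together with $P_\lambda=\sum_\mu\chi^\mu_\lambda s_\mu$, and match against the $GL_N\downarrow O_N$ restriction rule --- is viable and, done correctly, arguably cleaner: combined with the classical fact that $\int_{O(N)}s_\mu\,dU$ equals $1$ if all parts of $\mu$ are even and $0$ otherwise (valid once $\ell(\mu)\le N$), it identifies \emph{both} sides with $\sum_{\mu\ \mathrm{even}}\chi^\mu_\lambda$, so the explicit $g_j$ formula of \cite{DS} is not even needed; the hypothesis $|\lambda|<N$ enters exactly where you say it does.

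However, as written there is a concrete error: you have the two Littlewood identities swapped. The expansion of $\prod_{1\le i\le j\le N}(1-x_ix_j)^{-1}$ runs over partitions $\mu$ all of whose \emph{parts} are even (even rows); the sum over $\mu$ with $\mu'$ even (even columns) is the expansion of $\prod_{i<j}(1-x_ix_j)^{-1}$, i.e.\ of the symplectic weight $w_2$. Correspondingly, the $O(N)$-invariants in the $GL_N$-irreducible $V_\mu$ occur precisely for even-row $\mu$, while even columns is the symplectic condition. With your stated versions, the intermediate identity you would establish, $\int_{U(N)}P_\lambda\overline{w_O}\,dU=\sum_{\mu'\ \mathrm{even}}\chi^\mu_\lambda$, is false: that sum gives the symplectic moments (for $\lambda=(2)$ it equals $-1$, whereas $\int_{O(N)}\Tr(U^2)\,dU=1$), and your conclusion would only come out right because the same swap is repeated in the restriction rule, so the two errors cancel. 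Correct the swap and supply or cite the even-row form of the invariant/branching statement (classical, due to Littlewood; no correction terms arise since $\ell(\mu)\le|\lambda|<N$), and your argument closes.
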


Thus we see that the only difference in the symplectic weight function and the orthogonal weight function is whether we take the diagonal terms or not. In general, $w_r(U)$ is written as a product of strictly increasing $r$-tuples. The existence of families of $L$-functions with symplectic and orthogonal symmetry type then leads naturally into considering whether if we replace some strictly less than signs in $\omega_r(U)$ with less than or equal signs then are there natural families of $L$-functions whose statistics are governed by this new weight function? More precisely, for any $r$ and any $J\subset\{1,\dots,r-1\}$ consider the following weight functions
$$w_{J}(U) = \prod_{\substack{i_j =1,\cdots,N \\ i_j\leq i_{j+1}, j\in J \\ i_j< i_{j+1}, j\not\in J}} (1-x_{i_1}x_{i_2}\cdots x_{i_r})^{(-1)^{r+1}}.$$

\begin{ques}
For any $r$ and any $J\subset\{1,\dots,r-1\}$, does there exists a family of $L$-functions such that the normalized statistics tend to the unitaries weighted by the function $w_J$? 
\end{ques}

The answer in the case $J=\emptyset$ is clearly yes as we have just demonstrated that $\F_r(N)$ is such a family.

\textbf{Acknowledgements:} I would like to greatly thank Emilia Alvarez for the many fruitful conversations about random matrix theory over the months it took to finish this paper. I would also like to thank Alexander Lazar for directing me to useful references on combinatorics and Ze\'ev Rudnick for useful comments on an early draft.

\section{Heuristics and Quadratic Structure}

In this section, we will give heuristic arguments for why one should expect a result like Theorem \ref{MainThm} to be true and explain what is meant when we say a family of $L$-functions has ``quadratic structure". For simplicity of the heuristics, we will only work with the easiest power trace function: $\Tr(\Theta_L^n)$. Similar heuristics can be made about the general $P_\lambda$ functions using the analysis done in the proof of Theorem \ref{MainThm}.

\subsection{Euler Product and Trace Formula}

Recall that by the Riemann Hypothesis, for any $L$-function defined over $\Ff_q[T]$, we can find a unitary matrix $\Theta_L$ such that
$$L(u) = \det(1-\sqrt{q}u\Theta_L).$$
Taking the logarithmic derivative of the right hand side we get that
\begin{align}\label{logder1}
    \frac{d}{du} \log(L(u)) = \frac{d}{du} \log\left(\det(1-\sqrt{q}u\Theta_L)\right) = -\frac{1}{u} \sum_{n=1}^{\infty} q^{n/2} \Tr(\Theta_L^n)u^n.
\end{align}

On the other hand, if we take the logarithmic derivative of the Euler product formula in \eqref{EulProd}, we fine that
\begin{align}\label{logder2}
    \frac{d}{du} \log(L(u)) & = \sum_P \frac{d}{du} \log\left( 1 + a_L(P)u^{\deg(P)} + a_L(P^2)u^{2\deg(P)} + \cdots \right) \\
    & = \frac{1}{u}\sum_{F \mbox{ monic}} \Lambda(F) a_L^*(F) u^{\deg(F)}\nonumber
\end{align}
where 
\begin{align}\label{VMdef}
\Lambda(F) = \begin{cases} \deg(P) & F=P^k,  \mbox{ $P$ prime} \\ 0 & \mbox{otherwise} \end{cases}
\end{align}
is the function field von Mangoldt function.

Equating the coefficients of \eqref{logder1} and \eqref{logder2} we get a formula for the trace
\begin{align}\label{TrForm}
\Tr(\Theta_L^n) = -\frac{1}{q^{n/2}} \sum_{\deg(F)=n} \Lambda(F) a_L^*(F) = -\frac{1}{q^{n/2}} \sum_{d|n} d  \sum_{\deg(P)=d} a_L^*\left(P^\frac{n}{d}\right).
\end{align}

\subsection{Prime Polynomial Theorem}

In order to continue our heuristic, we need some facts about the number of primes in $\Ff_q[T]$ of degree $n$. The Prime Polynomial Theorem (see Proposition 2.1 of \cite{Rose}) states that
\begin{align}\label{PPT}
\sum_{\deg(F)=n} \Lambda(F) = q^n.
\end{align}

Now, if we denote
$$\pi_q(d) = \#\{ P\in \Ff_q[T] : P \mbox{ is prime and } \deg(P)=d \} $$
then we may rewrite \eqref{PPT} and conclude
$$q^n = \sum_{d|n} d\pi_q(d) \implies \pi_q(n) = \frac{q^n}{n} + O\left(q^{n/2}\right).$$
We can now use this asymptotic for $\pi_q(n)$ to prove a lemma that will be useful later.
\begin{lem}\label{PPPT}
For any $k$ and $n$, we have
$$\sum_{\deg(P)|n} \deg(P)^k = n^{k-1}q^n + O\left(n^kq^{\frac{n}{2}}\right)$$
where the sum is over prime polynomials.
\end{lem}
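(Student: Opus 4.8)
The plan is to reduce the sum over prime polynomials $P$ with $\deg(P)\mid n$ to a sum over the divisors $d$ of $n$, grouping primes by their common degree. Since there are $\pi_q(d)$ primes of degree $d$, each contributing $d^k$, we have
\begin{align*}
\sum_{\deg(P)\mid n} \deg(P)^k = \sum_{d\mid n} d^k \,\pi_q(d).
\end{align*}
First I would substitute the asymptotic $\pi_q(d) = \frac{q^d}{d} + O(q^{d/2})$ derived just above from the Prime Polynomial Theorem, so that the sum becomes
\begin{align*}
\sum_{d\mid n} d^k\left(\frac{q^d}{d} + O(q^{d/2})\right) = \sum_{d\mid n} d^{k-1} q^d + O\!\left(\sum_{d\mid n} d^k q^{d/2}\right).
\end{align*}

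The next step is to isolate the main term, which should come from the largest divisor $d=n$. Writing the main sum as $n^{k-1}q^n + \sum_{d\mid n,\, d<n} d^{k-1}q^d$, I would bound the remaining divisors by noting that any proper divisor of $n$ satisfies $d \le n/2$, hence $q^d \le q^{n/2}$, and there are at most $n$ divisors (crudely, $d(n)\le n$), so $\sum_{d\mid n,\,d<n} d^{k-1}q^d \ll n\cdot n^{k-1} q^{n/2} = n^k q^{n/2}$. The error term from the $\pi_q$ approximation is handled identically: $\sum_{d\mid n} d^k q^{d/2} \le n \cdot n^k q^{n/2}$, wait—more carefully, $\sum_{d\mid n} d^k q^{d/2} \ll n^k q^{n/2}$ since the $d=n$ term $n^k q^{n/2}$ dominates and there are $O(n)$ terms each $\le n^k q^{n/2}$, actually each proper term is $\le n^k q^{n/4}$, so this is also $O(n^{k}q^{n/2})$ (even $O(n^{k+1}q^{n/4})$ for the proper part, which is absorbed). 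Collecting everything yields $n^{k-1}q^n + O(n^k q^{n/2})$, as claimed.

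I do not expect any genuine obstacle here; the lemma is essentially bookkeeping on top of the Prime Polynomial Theorem. The only point requiring a little care is making sure the implied constants are absolute (independent of $q$, $n$, $k$) or at worst depend only on $k$ — in particular, checking that the number-of-divisors factor and the geometric decay in $q^{d/2}$ combine to give a clean $O(n^k q^{n/2})$ rather than something with extra logarithmic or divisor-function factors. Using the trivial bound $d(n)\le n$ suffices and keeps the statement in the stated form.
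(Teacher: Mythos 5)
Your proposal is correct and follows essentially the same route as the paper: write the sum as $\sum_{d\mid n} d^k\pi_q(d)$, insert the asymptotic $\pi_q(d)=\frac{q^d}{d}+O(q^{d/2})$ from the Prime Polynomial Theorem, and observe that the divisor $d=n$ gives the main term while proper divisors ($d\le n/2$) are absorbed into $O(n^kq^{n/2})$. Your treatment is in fact slightly more detailed than the paper's one-line computation, and the care with the divisor-count and constants is sound.
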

\begin{proof}
Indeed,
$$\sum_{\deg(P)|n} \deg(P)^k = \sum_{d|n} d^k\pi_q(d) = \sum_{d|n}\left( d^{k-1}q^d + O\left(d^kq^{d/2}\right)\right) = n^{k-1}q^n + O\left(n^kq^{\frac{n}{2}}\right)$$
\end{proof}

This now allows us to show that the contribution for when $d\geq 3$ in \eqref{TrForm} tends to $0$ as $q$ tends to infinity. That is, assuming $a^*_L(F)=O(1)$, we get
$$\frac{-1}{q^{n/2}} \sum_{\substack{d|n \\d\geq 3}} d \sum_{\deg(P)=d} a^*_L(P^{n/d}) \ll \frac{1}{q^{n/2}} \sum_{\substack{d|n \\ d\geq 3}} d\pi_q(d) \ll \frac{q^{n/3}}{q^{n/2}} = \frac{1}{q^{n/6}}.$$

Applying this, we see that as $q$ tends to infinity we are able to truncate \eqref{TrForm} to only consider the primes and prime squares  
\begin{align}\label{TrFormTrunc}
    \Tr(\Theta_L^n) = -\frac{n}{q^{n/2}} \left( \sum_{\deg(P)=n} a^*_L(P) + \frac{1}{2} \sum_{\deg(P)=n/2} a^*_L(P^2)  \right) + O\left(\frac{1}{q^{n/6}}\right)
\end{align}
where the second sum is understood to be empty if $n$ is odd. 

\subsection{Expected Value and Quadratic Structure}

Taking the expected value over our family we find that
$$\left\langle \Tr(\Theta_L^n)\right\rangle_{\F} = -\frac{n}{q^{n/2}}\left( \sum_{\deg(P)=n} \left\langle a^*_L(P)\right\rangle_{\F} + \frac{1}{2} \sum_{\deg(P)=n/2} \left\langle a^*_L(P^2)\right\rangle \right) + O\left(\frac{1}{q^{n/6}}\right).$$

For the primes we note that $a^*_L(P)=a_L(P)$, and so if we assume $a_L$ captures some arithmetic or geometric structure and $\F$ is a ``natural family" then the primes, being random, should be equidistributed among the structures. That is, it is natural to expect, at least for $n$ small enough\footnote{Note that one should not expect this limit to be $0$ for all $n$. In fact, determining if and for what values of $n$ this limit is not $0$ is crucial in determining the symmetry type of your family.}, that
$$\lim_{q\to\infty} \frac{n}{q^{n/2}}\sum_{\deg(P)=n} \left\langle a^*_L(P)\right\rangle_\F = 0.$$

Therefore, the expected value of $\Tr(\Theta_L^n)$ is determined precisely on what happens at the prime squares. We will then say that our family has ``quadratic structure" if
$$\lim_{q\to\infty} \frac{n}{q^{n/2}}\sum_{\deg(P)=n/2} \left\langle a^*_L(P^2)\right\rangle_\F \not=0$$
whenever $n$ is even. We see that in the above, our sum is of length $\approx \frac{q^{n/2}}{n}$. Therefore, for this limit to be non-zero we would need $\left\langle a^*_L(P^2)\right\rangle_\F$ to not equidistribute in the complex plane as the primes vary. That is the prime squares would not behave randomly.

\subsection{Higher Order Structures}

The first thing we did was trivially bound the primes that appeared with power higher than $2$. So, it begs the question: if a family has no quadratic structure then can the higher powers contribute? For instance, what happens if our family has ``cubic structure" so that $\left\langle a^*_L(P^3) \right\rangle_{\F}$ does not equidistribute in the complex plane as the primes vary? Then the prime cubes would not behave randomly and we could expect
$$\lim_{q\to\infty} \frac{n}{q^{n/3}} \sum_{\deg(P)=n/3} \left\langle a^*_L(P^3) \right\rangle_{\F} \not=0.$$
Hence, if one could show also that the contribution from the primes and prime squares tend to $0$ fast enough as $q$ tends to infinity, then one would be able to conclude that 
$$\lim_{q\to\infty}\left\langle q^{n/6}\Tr(\Theta_L^n)\right\rangle_{\F}$$
exists, is non-zero and, possibly, has a meaningful interpretation as a matrix integral.

\section{Trace Formula for our Family}\label{TraceSect}

Now we switch our attention to the specific families we are interested in: $\F_r(N)$. For ease of notation, we will define the set
\begin{align}\label{HrN}
\Hh_r(N) := \{G\in \Ff_q[T] :  G \mbox{ is $r$-th power free and } \deg(G)=N\}
\end{align}
so that our family becomes
$$\F_r(N) = \{L(u,\chi_G) : G\in\Hh_r(N)\}.$$
For further ease of notation, for any $G\in \Hh_r(N)$, we will define $\Theta_G$ as the Frobenius of $L(u,\chi_G)$.

\subsection{Formula for the Expected Value of a Normalized $P_{\lambda}$}

For any partition  $\lambda=1^{\lambda_1}2^{\lambda_2}\cdots k^{\lambda_k}$ we will equivalently write
$$\lambda = (n_1,n_2,\cdots,n_\ell)$$
where the first $\lambda_1$ of the $n_j$ are $1$, the next $\lambda_2$ of the $n_j$ are $2$ and so on. Then we may write
\begin{align}\label{Plambform1}
P_\lambda(\Theta_G) = \prod_{j=1}^{k} \Tr(\Theta_G^j)^{\lambda_j} = \prod_{j=1}^{\ell} \Tr(\Theta_G^{n_j}).
\end{align}
Since 
$$L(u,\chi_G) = \sum_{F \mbox{ monic}} \chi_G(F) u^{\deg(F)} = \prod_P \left( 1 - \chi_G(P)u^{\deg(P)} \right)^{-1}$$
we get that
$$\frac{d}{du} \log(L(u,\chi_G)) = \frac{1}{u}\sum_{F \mbox{ monic}} \Lambda(F) \chi_G(F) u^{\deg(F)}$$
and so by \eqref{TrForm}
\begin{align}\label{TrForm2}
\Tr(\Theta_G^n) = -\frac{1}{q^{\frac{n}{2}}} \sum_{\deg(F)=n} \Lambda(F) \chi_G(F) = -\frac{1}{q^{\frac{n}{2}}} \sum_{\deg(P)|n} \deg(P) \chi_G\left(P^{\frac{n}{\deg(P)}}\right).
\end{align}
Combining this with \eqref{Plambform1}, we obtain
\begin{align*}
    P_\lambda(\Theta_G) & = \prod_{j=1}^{\ell} \left(-\frac{1}{q^{\frac{n_j}{2}}} \sum_{\deg(P)|n_j} \deg(P) \chi_G\left(P^{\frac{n_j}{\deg(P)}}\right) \right) \\
    & = \frac{(-1)^{\ell}}{q^{\frac{|\lambda|}{2}}} \sum_{\substack{\deg(P_j)|n_j \\ j=1,\dots,\ell }} \prod_{j=1}^{\ell} \left(\deg(P_j) \chi_G\left(P_j^{\frac{n_j}{\deg(P_j)}}\right)\right).
\end{align*}
Where we have used the fact that if we write $\lambda = (n_1,n_2,\dots,n_\ell)$ then 
$$|\lambda|:= \lambda_1+2\lambda_2+\cdots+k\lambda_k = n_1+n_2+\cdots+n_\ell.$$

For ease of notation, for any tuple $(P_j) = (P_1,\dots,P_\ell)$ such that $\deg(P_j)|n_j$, we will write
\begin{align}\label{FPj}
    F(P_j) := \prod_{j=1}^{\ell} P_j^{\frac{n_j}{\deg(P_j)}}.
\end{align}

Further, since we are assuming $q\equiv 1 \bmod{2r}$, we have a simple form of $r$-th power reciprocity in that for any $F$ (see Theorem 3.3 of \cite{Rose})
$$\chi_G(F) = \left(\frac{F}{G}\right)_r = \left(\frac{G}{F}\right)_r = \chi_F(G).$$

Hence, averaging over $\Hh_r(N)$, we get
\begin{align}\label{Plambform2}
    \left\langle q^{\frac{r-2}{2r}|\lambda|} P_\lambda(\Theta_G)\right\rangle_{\Hh_r(N)} =  \frac{(-1)^{\ell}}{q^{\frac{|\lambda|}{r}}} \sum_{\substack{\deg(P_j)|n_j \\ j=1,\dots,\ell }} \left(\prod_{j=1}^{\ell} \deg(P_j)\right) \left\langle \chi_{F(P_j)}\left(G\right)\right\rangle_{\Hh_r(N)}
\end{align}

Finally, we get that $\chi_{F(P_j)}$ will be a non-trivial character if and only if $F(P_j)$ is not a perfect $r$-th power. Hence, we will define
\begin{align}\label{MainTerm}
MT_{\lambda}(N) := \frac{(-1)^{\ell}}{q^{\frac{|\lambda|}{r}}} \sum_{\substack{\deg(P_j)|n_j \\ j=1,\dots,\ell \\ F(P_j) = F^r }} \left(\prod_{j=1}^{\ell} \deg(P_j)\right) \left\langle \chi_{F^r}\left(G\right)\right\rangle_{\Hh_r(N)}.
\end{align}
and
\begin{align}\label{ErrTerm}
ET_{\lambda}(N) := \frac{(-1)^{\ell}}{q^{\frac{|\lambda|}{r}}} \sum_{\substack{\deg(P_j)|n_j \\ j=1,\dots,\ell \\ F(P_j) \not= F^r }} \left(\prod_{j=1}^{\ell} \deg(P_j)\right) \left\langle \chi_{F(P_j)}\left(G\right)\right\rangle_{\Hh_r(N)}.
\end{align}

\subsection{Coprimality Probability}

We see that if $F(P_j)=F^r$, then 
$$\chi_{F^r}(G) = \left(\frac{G}{F^r}\right)_r = \left(\frac{G}{F}\right)^r_r = \begin{cases} 1 & (F,G)=1 \\ 0 & (F,G)\not=1 \end{cases}.$$

Therefore,
$$\left\langle \chi_{F^r}(G) \right\rangle_{\Hh_r(N)} = \frac{1}{|\Hh_r(N)|} \sum_{G\in \Hh_r(N)} \chi_{F^r}(G) = \frac{|\Hh_r(N;F)|}{|\Hh_r(N)|} $$
where, we define
\begin{align}\label{HrNF}
\Hh_r(N;F) := \{G\in\Hh_r(N) : (G,F)=1\}.
\end{align}
So it remains to determine the size of $\Hh_r(N;F)$ for all $F$.

\begin{prop}\label{CoprSetSize}
For any $N\geq r\geq 2$ and any $F\in\Ff_q[T]$, we have
$$|\Hh_r(N;F)| = \frac{\phi(F)}{|F|} \prod_{P|F}\left(1+ \frac{1}{|P|^r-1}\right)  \left(q^N-q^{N+1-r}\right) + O(1).$$
where $\phi(F)$ is the Euler totient function. In particular, if we set $F=1$, then we get that
$$|\Hh_r(N)| = \begin{cases} q^N & N<r \\ q^N - q^{N+1-r} & N\geq r \end{cases}.$$
\end{prop}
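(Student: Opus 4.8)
The plan is to count $r$-th power free polynomials $G$ of degree $N$ coprime to a fixed $F$ by a standard M\"obius-type inversion over $r$-th powers. First I would recall that the indicator that $G$ is $r$-th power free can be written as $\sum_{D^r \mid G} \mu(D)$, where the sum is over monic $D$; this is the function-field analogue of the classical identity for squarefree integers. Writing $G = D^r H$ with $H$ monic of degree $N - r\deg(D)$, the coprimality condition $(G,F)=1$ becomes the pair of conditions $(D,F)=1$ and $(H,F)=1$. Hence
\begin{align*}
|\Hh_r(N;F)| = \sum_{\substack{D \text{ monic}, (D,F)=1 \\ r\deg(D) \le N}} \mu(D) \cdot \#\{H \text{ monic} : \deg(H) = N - r\deg(D),\ (H,F)=1\}.
\end{align*}

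The inner count is the number of monic polynomials of a given degree coprime to $F$. For degree $m$ large enough (specifically $m \ge \deg(F)$), this count is exactly $\tfrac{\phi(F)}{|F|} q^m$; for small $m$ it may differ, but only on a bounded range of $m$, contributing $O(1)$ overall after we sum against $\mu(D)$ (since the relevant range of $\deg D$ is then also bounded, and each term is $O(q^{\deg F})=O(1)$ for fixed $F$). So up to $O(1)$ I would replace the inner count by $\tfrac{\phi(F)}{|F|} q^{N - r\deg(D)}$ and extend the sum over $D$ to \emph{all} monic $D$ with $(D,F)=1$ — again the tail $r\deg(D) > N$ contributes only finitely many terms, each $O(1)$ for fixed $F$. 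This gives
\begin{align*}
|\Hh_r(N;F)| = \frac{\phi(F)}{|F|}\, q^N \sum_{\substack{D \text{ monic} \\ (D,F)=1}} \frac{\mu(D)}{q^{r\deg(D)}} + O(1) = \frac{\phi(F)}{|F|}\, q^N \prod_{P \nmid F}\left(1 - \frac{1}{|P|^r}\right) + O(1),
\end{align*}
using the Euler product for the (convergent, since $r \ge 2$) Dirichlet series $\sum_D \mu(D)|D|^{-s}$ at $s=r$.

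Next I would massage the Euler product into the stated shape. Since $\prod_{P}(1 - |P|^{-r}) = \zeta_q(r)^{-1} = 1 - q^{1-r}$ (the function-field zeta function is $\zeta_q(s) = (1-q^{1-s})^{-1}$), we have
\begin{align*}
\prod_{P \nmid F}\left(1 - \frac{1}{|P|^r}\right) = (1 - q^{1-r}) \prod_{P \mid F}\left(1 - \frac{1}{|P|^r}\right)^{-1} = (1 - q^{1-r}) \prod_{P \mid F}\frac{|P|^r}{|P|^r - 1}.
\end{align*}
It remains to check that $\prod_{P\mid F} \tfrac{|P|^r}{|P|^r-1}$ equals $\prod_{P\mid F}\bigl(1 + \tfrac{1}{|P|^r-1}\bigr)$, which is immediate, and that the prefactor $q^N(1-q^{1-r}) = q^N - q^{N+1-r}$ matches. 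Multiplying everything together yields the claimed formula. For the special case $F=1$ with $N \ge r$, the product over $P \mid F$ is empty and $\phi(1)/|1| = 1$, giving exactly $q^N - q^{N+1-r}$ with no error term; for $N < r$ the only $D$ with $r\deg(D) \le N$ is $D=1$, so the sum collapses and $|\Hh_r(N)|$ counts all monic degree-$N$ polynomials, which is $q^N$ — here one should double-check the $O(1)$ is genuinely absent, i.e. that the small-$m$ corrections to the coprimality count vanish when $F=1$, which is clear since every $H$ is coprime to $1$.

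The main obstacle, such as it is, is bookkeeping the $O(1)$ error carefully: one must confirm that the discrepancy between the true count of monic polynomials of degree $m$ coprime to $F$ and the main term $\tfrac{\phi(F)}{|F|}q^m$ is supported on $0 \le m < \deg F$ and is $O(q^{\deg F}) = O(1)$ for fixed $F$, and that the truncation of the $D$-sum to $r\deg D \le N$ likewise costs only $O(1)$; neither the implied constant's uniformity in $N$ (it is uniform, since the error terms do not grow with $N$) nor its dependence on $F$ (allowed to depend on $F$, as the statement only claims $O(1)$) causes trouble, but it is the one place where a sloppy argument could go wrong. Everything else is a routine manipulation of Euler products over $\Ff_q[T]$.
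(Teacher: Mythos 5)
Your M\"obius decomposition $|\Hh_r(N;F)|=\sum_{r\deg D\le N,\ (D,F)=1}\mu(D)\,\#\{H:\deg H=N-r\deg D,\ (H,F)=1\}$ and the Euler-product manipulation of the main term are correct and do reproduce the stated main term (including the exact values at $F=1$, though for $N\ge r$ the exactness also uses $\sum_{\deg D=d}\mu(D)=0$ for $d\ge 2$, which you did not invoke). The genuine gap is in the claim that the corrections are $O(1)$. You argue that the range of $m=N-r\deg D$ where the coprime count deviates from $\frac{\phi(F)}{|F|}q^m$ is bounded, and that the tail $r\deg D>N$ has ``only finitely many terms, each $O(1)$''; but this conflates the number of admissible \emph{degrees} $d=\deg D$ with the number of \emph{polynomials} $D$ of those degrees. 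There are $q^{d}$ monic $D$ of degree $d$, and the relevant degrees are $d\approx N/r$, so bounding term-by-term in absolute value the small-$m$ corrections cost up to roughly $q^{N/r+\deg F}$ and the tail (which is an infinite sum, not a finite one) costs roughly $q^{N/r}$. These are positive powers of $q$, not $O(1)$, and the $O(1)$ in the Proposition must be uniform in $q$: it is later divided by $|\Hh_r(N)|\asymp q^N$ to yield the $O(q^{-N})$ term in Corollary \ref{CoprCor}, and ultimately feeds the $q\to\infty$ limit.

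The estimate can be repaired only by exploiting cancellation in the signed sums $M_F(d):=\sum_{\deg D=d,\ (D,F)=1}\mu(D)$, whose generating function is $(1-qu)\prod_{P\mid F}(1-u^{\deg P})^{-1}$, so that $M_F(d)=O_F\!\left(q\,d^{\omega(F)}\right)$ rather than the trivial $q^{d}$; with this input both the small-$m$ corrections and the tail become bounded independently of $q$. But at that point you are effectively working coefficient-by-coefficient with the generating series, which is the paper's proof: it forms $\mathcal{G}_F(u)=\prod_{P\mid F}\frac{1-u^{\deg P}}{1-u^{r\deg P}}\cdot\frac{\zeta_q(u)}{\zeta_q(u^r)}$, notes it is meromorphic in $|u|<1$ with a single simple pole at $u=q^{-1}$, and extracts the $N$-th coefficient as the residue there (the main term) plus a contour integral over $|u|=\tfrac12$ (the $O(1)$ error). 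So either adopt that route outright or insert the bound on $M_F(d)$ explicitly; as written, the $O(1)$ is not justified.
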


\begin{proof}

Consider the generating series 
\begin{align*}
    \mathcal{G}_F(u) := \sum_{N=0}^{\infty} |\Hh_r(N;F)|u^N = \sideset{}{^*}\sum_{(G,F)=1} u^{\deg(G)}
\end{align*}
where the $^*$ indicates we take $G$ to be $r$-th power free. Then this generating series has an Euler product
\begin{align*}
    \mathcal{G}_F(u) & = \prod_{P\nmid F} \left(1 +u^{\deg(P)} + u^{2\deg(P)} + \cdots + u^{(r-1)\deg(P)} \right) \\
    & = \prod_{P\nmid F} \frac{1-u^{r\deg(P)}}{1-u^{\deg(P)}}\\
    & = \prod_{P|F} \frac{1-u^{\deg(P)}}{1-u^{r\deg(P)}} \frac{\zeta_q(u)}{\zeta_q(u^r)} \\
\end{align*}
where $\zeta_q(u)$ is the zeta function defined over $\Ff_q[T]$ given by the following equivalencies for $|u|<q^{-1}$
$$\zeta_q(u) :=\prod_P \left(1-u^{\deg(P)}\right)^{-1} =  \sum_F u^{\deg(F)} =  \frac{1}{1-qu}$$

Thus, we see that $\mathcal{G}_F(u)$ may be meromorphically extended to the region $|u|<1$ with a simple pole at $u=q^{-1}$. Thus, if $\Gamma = \{u : |u|=\frac{1}{2}\}$
\begin{align*}
    |\Hh_r(N;F)| & = -\Res_{u=q^{-1}}\left(\frac{\mathcal{G}_F(u)}{u^{N+1}}\right) + \frac{1}{2\pi i}\oint_{\Gamma} \frac{\mathcal{G}_F(u)}{u^{N+1}} du \\
    & = \prod_{P|F} \frac{1-\frac{1}{|P|}}{1-\frac{1}{|P|^r}} \left(q^N-q^{N+1-r}\right)+ O(1).
\end{align*}

To simplify, we note that
$$\prod_{P|F} 1-\frac{1}{|P|} = \prod_{P|F} \frac{|P|-1}{|P|} = \frac{\phi(F)}{|F|} \quad \quad \mbox{whereas} \quad \quad \frac{1}{1-\frac{1}{|P|^r}} = 1+\frac{1}{|P|^r-1}.$$

In particular, if $F=1$, then we can meromorphically extend $\mathcal{G}_1(u)$ to the whole complex plane. Hence if $\Gamma_d = \{u: |u|=d\}$, we get
\begin{align*}
    |\Hh_r(N)| & = -\Res_{u=q^{-1}}\left(\frac{\mathcal{G}_1(u)}{u^{N+1}}\right) + \frac{1}{2\pi i}\oint_{\Gamma_d} \frac{\mathcal{G}_1(u)}{u^{N+1}} du \\
    & = \begin{cases} q^N & N<r \\ q^N-q^{N+1-r} &N\geq r \end{cases} + O\left(\frac{1}{d^N}\right).
\end{align*}
Sending $d\to\infty$ concludes the proof.
\end{proof}

This leads to an immediate corollary for the expected value of $\chi_{F^r}(G)$ as $G$ runs over $\Hh_r(N)$.

\begin{cor}\label{CoprCor}
For any $F\in\Ff_q[T]$ and any $N\geq r\geq2$,
$$\left\langle \chi_{F^r}(G) \right\rangle_{\Hh_r(N)} = 1 + O\left(\frac{\tau(F)}{q} \right)$$
where $\tau(F)$ is the number of divisors of $F$.
\end{cor}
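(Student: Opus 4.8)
The plan is to derive Corollary \ref{CoprCor} directly from Proposition \ref{CoprSetSize} by taking the ratio $|\Hh_r(N;F)|/|\Hh_r(N)|$ and bounding the resulting error. First I would write, using the main term of Proposition \ref{CoprSetSize} with $N \geq r$,
\begin{align*}
\left\langle \chi_{F^r}(G) \right\rangle_{\Hh_r(N)} = \frac{|\Hh_r(N;F)|}{|\Hh_r(N)|} = \frac{\frac{\phi(F)}{|F|}\prod_{P|F}\left(1+\frac{1}{|P|^r-1}\right)\left(q^N-q^{N+1-r}\right) + O(1)}{q^N - q^{N+1-r}},
\end{align*}
using the $F=1$ case of the Proposition in the denominator. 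The $O(1)$ in the numerator, divided by $q^N - q^{N+1-r} = q^N(1 - q^{1-r}) \gg q^N$ (for $N\geq r\geq 2$), contributes an error of size $O(q^{-N})$, which is certainly absorbed into $O(\tau(F)/q)$.

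Next I would handle the main ratio $\frac{\phi(F)}{|F|}\prod_{P|F}\left(1+\frac{1}{|P|^r-1}\right)$, which I want to show equals $1 + O(\tau(F)/q)$. Writing $\frac{\phi(F)}{|F|} = \prod_{P|F}\left(1 - \frac{1}{|P|}\right)$, the whole product becomes $\prod_{P|F}\left(1-\frac{1}{|P|}\right)\left(1+\frac{1}{|P|^r-1}\right)$. Each factor is $1 + O(1/|P|)$, and since every prime $P$ satisfies $|P| \geq q$, each factor is $1 + O(1/q)$. Expanding the product over the (at most $\omega(F) \leq \tau(F)$) distinct primes dividing $F$ and using that $1/q$ is small, the product is $1 + O(\omega(F)/q) = 1 + O(\tau(F)/q)$; more carefully, $\prod_{P|F}(1+c_P/q)$ with $|c_P|$ bounded is $\exp\left(\sum_{P|F} O(1/q)\right) = 1 + O(\tau(F)/q)$ once $\tau(F)/q$ is bounded, and when $\tau(F)/q \gg 1$ the statement is vacuous since the left side is trivially $O(1)$.

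Finally I would combine: the numerator main term is $\left(1 + O(\tau(F)/q)\right)(q^N - q^{N+1-r})$, so dividing by $q^N - q^{N+1-r}$ gives $1 + O(\tau(F)/q)$, and adding the $O(q^{-N})$ from the $O(1)$ term leaves $1 + O(\tau(F)/q)$, as claimed. I expect no real obstacle here — this is essentially a bookkeeping exercise once Proposition \ref{CoprSetSize} is in hand; the only mild subtlety is making sure the error in the product over $P \mid F$ is controlled uniformly in $F$, which works because the number of prime factors is $\ll \tau(F)$ and each prime has norm $\geq q$, so there is genuine saving of a power of $q$ per prime factor.
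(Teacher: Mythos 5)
Your proposal is correct and follows essentially the same route as the paper: divide the count from Proposition \ref{CoprSetSize} by the $F=1$ case, absorb the $O(1)$ term as $O(q^{-N})$, and show the Euler-type product over $P \mid F$ is $1+O(\tau(F)/q)$. The paper does the last step by expanding the products into divisor sums ($1+\sum_{D\mid F,\,D\neq 1}\mu(D)/|D|$, etc.) rather than your factor-by-factor estimate with $|P|\geq q$, but this is only a cosmetic difference in bookkeeping.
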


\begin{proof}

From Proposition \ref{CoprSetSize}, we get that
$$\left\langle \chi_{F^r}(G) \right\rangle_{\Hh_r(N)} =  \frac{|\Hh_r(N;F)|}{|\Hh_r(N)|} = \frac{\phi(F)}{|F|} \prod_{P|F} \left(1 + \frac{1}{|P|^r-1}\right) + O\left(\frac{1}{q^N}\right).$$
Further
$$\prod_{P|F} \left(1 + \frac{1}{|P|^r-1}\right) = 1 + \sum_{\substack{D|F \\ D\not=1 }} \mu^2(D) \prod_{P|D} \frac{1}{|P|^r-1} =  1 + O\left(\sum_{D\not=1} \frac{1}{|D|^r} \right) = 1+ O\left(\frac{1}{q^{r-1}}\right)  $$
and
$$\frac{\phi(F)}{|F|} = \prod_{P|F} 1-\frac{1}{|P|} = 1 + \sum_{\substack{D|F\\ D\not=1}} \frac{\mu(D)}{|D|} = 1 + O\left(\frac{\tau(F)}{q}\right).$$

\end{proof}

\begin{cor}\label{MainTermCor}
For any partition $\lambda=(n_1,n_2,\dots,n_\ell)$ and any $N\geq r\geq 2$ such that $q\equiv 1 \bmod{2r}$, we get
\begin{align}\label{MTN}
    MT_{\lambda}(N) = \frac{(-1)^{\ell}}{q^{\frac{|\lambda|}{r}}} \sum_{\substack{\deg(P_j)|n_j \\ j=1,\dots,\ell \\ F(P_j) = F^r }} \left(\prod_{j=1}^{\ell} \deg(P_j)\right) \left(1 + O\left(\frac{1}{q}\right)\right).
\end{align}
\end{cor}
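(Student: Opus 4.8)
The plan is to feed the result of Corollary~\ref{CoprCor} into the definition~\eqref{MainTerm} of $MT_\lambda(N)$ and to check that the accumulated error terms are genuinely $O(1/q)$ relative to the main sum. Concretely, Corollary~\ref{CoprCor} gives
$$\left\langle \chi_{F^r}(G)\right\rangle_{\Hh_r(N)} = 1 + O\!\left(\frac{\tau(F)}{q}\right),$$
so substituting into~\eqref{MainTerm} yields
$$MT_\lambda(N) = \frac{(-1)^\ell}{q^{|\lambda|/r}} \sum_{\substack{\deg(P_j)\mid n_j \\ j=1,\dots,\ell \\ F(P_j)=F^r}} \left(\prod_{j=1}^\ell \deg(P_j)\right)\left(1 + O\!\left(\frac{\tau(F)}{q}\right)\right).$$
To reach the claimed form~\eqref{MTN} I must absorb the factor $\tau(F)$ into the $O(1/q)$. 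So the key step is to observe that on the support of this sum $F$ is not arbitrary: it is determined by $(P_1,\dots,P_\ell)$ via $F^r = \prod_j P_j^{n_j/\deg(P_j)}$, so in particular $\deg(F) = |\lambda|/r$ is a fixed constant (independent of $q$), which forces $\tau(F) = O_\lambda(1)$. Hence $O(\tau(F)/q) = O(1/q)$ uniformly over the summation range, and pulling this bound out of the sum gives exactly~\eqref{MTN}.

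First I would spell out the constraint that the summation condition $F(P_j) = F^r$ imposes: taking degrees in~\eqref{FPj} gives $r\deg(F) = \sum_{j=1}^\ell n_j = |\lambda|$, so $\deg(F)$ is the fixed number $|\lambda|/r$ (and in particular the condition is vacuous unless $r \mid |\lambda|$, in which case $MT_\lambda(N) = 0$ trivially and there is nothing to prove). Since a polynomial of degree $|\lambda|/r$ over $\Ff_q$ has at most $|\lambda|/r + 1$ monic divisors, $\tau(F) \leq |\lambda|/r + 1$, a bound depending only on $\lambda$. Second, I would note that each $\deg(P_j) \leq n_j \leq |\lambda|$, so each factor $\deg(P_j)$ in the product is also $O_\lambda(1)$; this is not strictly needed to derive~\eqref{MTN} from the substitution, but it is the same elementary observation and reassures that every implied constant is harmless. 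Third, with $\tau(F) = O_\lambda(1)$ in hand, the error contribution is
$$\frac{1}{q^{|\lambda|/r}} \sum_{\substack{\deg(P_j)\mid n_j \\ F(P_j)=F^r}} \left(\prod_{j=1}^\ell \deg(P_j)\right) O\!\left(\frac{1}{q}\right),$$
which is exactly $O(1/q)$ times the main sum, so we may write the bracketed factor uniformly as $1 + O(1/q)$ and factor it out of the summation, yielding the stated identity.

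I do not expect any real obstacle here: the proposition is essentially a bookkeeping consequence of Corollary~\ref{CoprCor}. The only point requiring a moment's care — and the one I would make explicit in the write-up — is the implicit claim of \emph{uniformity}: the error term $O(1/q)$ in~\eqref{MTN} must not hide a dependence on the particular tuple $(P_j)$ being summed over, and this is precisely what is guaranteed by the observation that $\deg(F)$, hence $\tau(F)$, is bounded in terms of $\lambda$ alone. Everything else is a one-line substitution.
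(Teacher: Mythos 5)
Your argument is the paper's own: substitute Corollary \ref{CoprCor} into \eqref{MainTerm} and observe that on the support of the sum $\tau$ is bounded in terms of $\lambda$ alone (the paper bounds it by $2^{\ell}$, counting the squarefree divisors that actually enter the proof of Corollary \ref{CoprCor}), so the error is uniformly $O(1/q)$. One small slip: a polynomial of degree $d$ over $\Ff_q$ can have up to $2^{d}$ monic divisors (e.g.\ a product of $d$ distinct linear factors), not $d+1$; since $F$ has degree $|\lambda|/r$ and hence at most $|\lambda|/r$ prime factors with multiplicity, the correct bound $\tau(F)\leq 2^{|\lambda|/r}$ still depends only on $\lambda$, so your conclusion and the uniformity claim are unaffected.
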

\begin{proof}
Recall that 
$$F(P_j) = \prod_{j=1}^{\ell} P_j^{\frac{n_j}{\deg(P_j)}}$$
so that the result follows from Corollary \ref{CoprCor} and the fact that $\tau(F(P_j))=2^{\ell} = O(1)$ 
\end{proof}

Note that the right hand side \eqref{MTN} now does not depend on $N$. Hence we will define simply
\begin{align}\label{MT}
MT_{\lambda} := \frac{(-1)^{\ell}}{q^{\frac{|\lambda|}{r}}} \sum_{\substack{\deg(P_j)|n_j \\ j=1,\dots,\ell \\ F(P_j) = F^r }} \left(\prod_{j=1}^{\ell} \deg(P_j)\right).
\end{align}
Thus it remains to determine the limit at $q$ tends to infinity of $MT_{\lambda}$.






\subsection{Bounding $ET_\lambda(N)$}

\begin{prop}\label{ErrTermProp}
For any partition $\lambda$ and any $N\geq r\geq 2$ such that $q\equiv 1 \bmod{2r}$, we have
$$ET_\lambda(N) \ll q^{\left(1-\frac{1}{r}\right)|\lambda|-\frac{N}{2}}$$
\end{prop}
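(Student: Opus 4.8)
The plan is to bound $ET_\lambda(N)$ directly from its definition \eqref{ErrTerm} by controlling the character average $\langle \chi_{F(P_j)}(G)\rangle_{\Hh_r(N)}$ when $\chi_{F(P_j)}$ is nontrivial, i.e. when $F(P_j)$ is not a perfect $r$-th power. The main input will be a Polya--Vinogradov / large-sieve style estimate: for a fixed nontrivial character $\chi_{F(P_j)}$ of conductor (roughly) $F(P_j)$, the sum $\sum_{G\in\Hh_r(N)}\chi_{F(P_j)}(G)$ should be $O(q^{\,(\text{something like }\deg F(P_j) + N)/2})$ by Weil's bounds applied after sieving out the $r$-th-power-free condition (which itself is handled by Möbius over $r$-th powers, exactly as in the generating-function computation in Proposition \ref{CoprSetSize}). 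Dividing by $|\Hh_r(N)|\asymp q^N$ gives $\langle \chi_{F(P_j)}(G)\rangle_{\Hh_r(N)}\ll q^{(\deg F(P_j)-N)/2}$, up to lower-order factors.

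Next I would insert this into \eqref{ErrTerm}. Recall $F(P_j)=\prod_{j=1}^\ell P_j^{n_j/\deg(P_j)}$, so $\deg F(P_j)=\sum_{j=1}^\ell n_j = |\lambda|$ whenever all the $P_j$ are distinct (the generic case); in degenerate cases $\deg F(P_j)\le|\lambda|$, which only helps. Thus each nontrivial term contributes $\ll q^{(|\lambda|-N)/2}$ from the character average. It then remains to count the tuples $(P_j)$ and to account for the prefactor $q^{-|\lambda|/r}\prod_{j=1}^\ell\deg(P_j)$. The number of choices of $P_j$ with $\deg(P_j)\mid n_j$ is $\sum_{d\mid n_j}\pi_q(d)=O(q^{n_j})$, and Lemma \ref{PPPT} (or the crude bound $\sum_{\deg(P)\mid n}\deg(P)\ll q^n$) lets us absorb the $\deg(P_j)$ weights, so the total sum over tuples of $\prod_j\deg(P_j)$ is $\ll q^{\sum_j n_j}=q^{|\lambda|}$. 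Multiplying: $q^{-|\lambda|/r}\cdot q^{|\lambda|}\cdot q^{(|\lambda|-N)/2}$. This does not immediately give the claimed exponent $\left(1-\frac1r\right)|\lambda|-\frac N2$, so the bookkeeping has to be sharper: one should \emph{not} lose a full $q^{|\lambda|}$ from the tuple count and simultaneously $q^{|\lambda|/2}$ from $\deg F(P_j)$ in the character bound, because these are linked — the character average already ``sees'' the size of $F(P_j)$. The cleaner route is to fix $F$ (equivalently the squarefree-ish kernel of $F(P_j)$), bound $\langle\chi_{F(P_j)}(G)\rangle\ll q^{(\deg F(P_j))/2 - N/2}$, and then sum over the tuples $(P_j)$ giving rise to a fixed $F(P_j)$ — there are $O(1)$ such tuples for each admissible $F(P_j)$ — so the sum over tuples of the character averages is really $\ll \sum_{F(P_j)} q^{(\deg F(P_j))/2-N/2}\ll q^{|\lambda|/2-N/2}$ after again using the prime polynomial count to bound the number of admissible $F(P_j)$ of each degree, weighted appropriately.

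Carrying the $\deg(P_j)$ weights and the exact exponent through, the target $q^{(1-1/r)|\lambda|-N/2}$ suggests that the honest accounting is: $q^{-|\lambda|/r}$ (prefactor) $\times\ q^{|\lambda|}$ (number of tuples times the $\deg(P_j)$ weights, via Lemma \ref{PPPT}, since each of the $\ell$ sums over $\deg(P_j)\mid n_j$ contributes $q^{n_j}$ with the weight absorbed) $\times\ q^{-N/2}$ (the Weil cancellation in $G$, with the $q^{\deg F(P_j)/2}$ from Polya--Vinogradov being \emph{cancelled} against part of the tuple count rather than added — i.e. one gets $q^{-N/2}$ \emph{net} once the dependence on $\deg F(P_j)$ is tracked consistently), giving $q^{(1-1/r)|\lambda|-N/2}$ as stated. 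So the real work, and the main obstacle, is the careful large-sieve/Weil estimate for $\sum_{G\in\Hh_r(N),\,(G,\cdot)}\chi_{F}(G)$ over $r$-th-power-free $G$ with the right exponent in both the conductor of $\chi_F$ and in $N$, together with a clean accounting that does not double-count the size of $F(P_j)$. I expect one proves the character sum bound by writing the $r$-th-power-free indicator via $\sum_{E^r\mid G}\mu(E)$, reducing to sums $\sum_{\deg H = N - r\deg E}\chi_F(H)$ over \emph{all} monic $H$, each of which is $O(q^{\deg F/2})$ by Weil once $\chi_F$ is nontrivial, and then summing the geometric-type series in $E$; the $N$-dependence enters only through the range of $E$ and ultimately yields the stated saving of $q^{-N/2}$ after comparison with the main-term normalization. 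Everything else — the $\ell=O(1)$, $\tau(F(P_j))=O(1)$ bookkeeping — is routine as in Corollaries \ref{CoprCor} and \ref{MainTermCor}.
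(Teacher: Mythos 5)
Your overall skeleton matches the paper's: square-root cancellation in $N$ for the average of $\chi_{F(P_j)}$ over $G\in\Hh_r(N)$ when $F(P_j)$ is not an $r$-th power, followed by a trivial summation over the prime tuples via Lemma \ref{PPPT}, which contributes $\prod_j q^{n_j}=q^{|\lambda|}$ against the prefactor $q^{-|\lambda|/r}$, with $\tau(F(P_j))=O(1)$ bookkeeping. But the decisive estimate is never actually established in your write-up, and that is a genuine gap. What the argument needs is $\langle\chi_{F(P_j)}(G)\rangle_{\Hh_r(N)}\ll 2^{\deg F(P_j)}q^{-N/2}$, i.e.\ square-root cancellation in $N$ with the dependence on $\deg F(P_j)$ confined to a $q$-independent constant (harmless, since $\deg F(P_j)=|\lambda|=O(1)$). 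Your stated main input, $\langle\chi_F(G)\rangle\ll q^{(\deg F-N)/2}$, is strictly weaker and, as you yourself note, only yields $q^{(1-1/r)|\lambda|+|\lambda|/2-N/2}$. Your attempted repair is incorrect: every admissible $F(P_j)$ has degree exactly $|\lambda|$ (each factor $P_j^{n_j/\deg P_j}$ has degree $n_j$, so there is no ``generic/degenerate'' distinction), and there are on the order of $q^{|\lambda|}$ of them, so $\sum_{F(P_j)}q^{\deg F(P_j)/2-N/2}$ is of size $q^{3|\lambda|/2-N/2}$, not $q^{|\lambda|/2-N/2}$; the fact that each $F$ arises from $O(1)$ tuples does not shrink the set of $F$'s. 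The subsequent ``honest accounting'' then simply asserts the net saving $q^{-N/2}$ to match the target rather than deriving it.

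The correct mechanism is the one you only gesture at in your last paragraph, and it needs more than ``each complete sum is $O(q^{\deg F/2})$ by Weil''. Writing the $r$-th-power-free condition as $\sum_{E^r\mid G}\mu(E)$, the complete sum $\sum_{\deg H=N-r\deg E}\chi_F(H)$ is the coefficient of $u^{N-r\deg E}$ in $L(u,\chi_F)$, hence vanishes once $N-r\deg E\geq\deg F$ and is at most $2^{\deg F}q^{(N-r\deg E)/2}$ otherwise; so only $\deg E$ within $O(\deg F)$ of $N/r$ survives, and each surviving term satisfies $q^{\deg E}\cdot q^{(N-r\deg E)/2}\leq q^{N/2}$ because $r\geq 2$. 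If instead you sum the flat bound $O(q^{\deg F/2})$ over all $\deg E\leq N/r$, as your sketch suggests, you get $O(q^{N/r+\deg F/2})$, which does not give the uniform $q^{N/2}$ saving needed in the stated range (for $r=2$ it reproduces the too-weak bound you started with). The paper proves exactly the needed estimate (Lemma \ref{ErrTermLem}) by the equivalent generating-function route: $\mathcal{G}(u,\chi_F)=\prod_{P\mid F}\left(1-u^{r\deg P}\right)^{-1}L(u,\chi_F)/\zeta_q(u^r)$, a contour integral over $|u|=q^{-1/2}$, and the Riemann Hypothesis bound $|L(u,\chi_F)|=|\det(1-\sqrt{q}u\Theta_F)|\leq 2^{\deg F}$ on that circle, then divides by $|\Hh_r(N)|\asymp q^N$ from Proposition \ref{CoprSetSize}. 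Once that lemma is in place, your remaining bookkeeping coincides with the paper's proof of Proposition \ref{ErrTermProp}.
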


This leads to an immediate corollary
\begin{cor}
For any $N\geq r \geq 2$ such that $q\equiv1\bmod{2r}$ and any partition $\lambda$ such that $|\lambda|< \frac{rN}{2r-2}$, we have
$$\lim_{q\to\infty} ET_{\lambda}(N) =0$$
\end{cor}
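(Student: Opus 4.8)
The plan is to bound the inner sum in the definition \eqref{ErrTerm} of $ET_\lambda(N)$ trivially, using the crucial fact that the coprimality average $\left\langle \chi_{F(P_j)}(G)\right\rangle_{\Hh_r(N)}$ is small precisely because $\chi_{F(P_j)}$ is a non-trivial character. First I would recall that $\chi_{F(P_j)}(G) = \chi_G(F(P_j))$ by reciprocity, so the average over $G \in \Hh_r(N)$ is a character sum over $r$-th power free polynomials of degree $N$. Since $F(P_j)$ is not a perfect $r$-th power in this sum, $\chi_{F(P_j)}$ is a non-principal character modulo $F(P_j)$ (of conductor dividing $F(P_j)$), and the $L$-function $L(u, \chi_{F(P_j)})$ is a nontrivial polynomial whose coefficients satisfy Weil's bound. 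Sieving out the $r$-th power free condition via a generating-function argument analogous to the proof of Proposition \ref{CoprSetSize} (writing $\sum^* \chi_{F(P_j)}(G) u^{\deg G}$ as an Euler product and relating it to $L(u,\chi)/L(u^r, \chi^r)$ or similar), one finds that the sum $\sum_{G \in \Hh_r(N)} \chi_{F(P_j)}(G)$ has no pole at $u = q^{-1}$ and hence is $O\!\left(q^{N/2} \cdot (\text{something polynomial in } \deg F(P_j))\right)$; dividing by $|\Hh_r(N)| \asymp q^N$ gives $\left\langle \chi_{F(P_j)}(G)\right\rangle_{\Hh_r(N)} \ll q^{-N/2}$ up to factors polynomial in $\deg F(P_j) \leq |\lambda|$, which are $O(1)$ as $q \to \infty$ for fixed $\lambda$.

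Next I would estimate the number of terms and the size of the weights $\prod_j \deg(P_j)$ in the sum. The sum runs over tuples $(P_1,\dots,P_\ell)$ with $\deg(P_j) \mid n_j$; there are at most $\tau(n_j) \leq n_j$ choices for each $\deg(P_j) = d_j$, and for each such $d_j$ at most $\pi_q(d_j) \ll q^{d_j}/d_j$ primes, contributing a weight $d_j$. So $\sum_{\deg(P_j) \mid n_j} \deg(P_j) = \sum_{d_j \mid n_j} d_j \pi_q(d_j) \ll q^{n_j}$ by Lemma \ref{PPPT} (the $k=1$ case), uniformly. Multiplying over $j = 1, \dots, \ell$ gives that the full weighted sum over tuples is $\ll q^{n_1 + \cdots + n_\ell} = q^{|\lambda|}$. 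Combining with the prefactor $q^{-|\lambda|/r}$ and the bound $q^{-N/2}$ on each average yields
$$ET_\lambda(N) \ll q^{-|\lambda|/r} \cdot q^{|\lambda|} \cdot q^{-N/2} = q^{(1 - 1/r)|\lambda| - N/2},$$
which is exactly the claimed bound, and the corollary follows immediately since $|\lambda| < \frac{rN}{2r-2}$ is equivalent to $(1-\frac1r)|\lambda| - \frac N2 < 0$.

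The main obstacle I expect is establishing the uniform square-root cancellation bound $\left\langle \chi_{F(P_j)}(G)\right\rangle_{\Hh_r(N)} \ll q^{N/2}/q^N$ with explicit (polynomial in $\deg F$) dependence on the modulus, since one must handle the interaction between the $r$-th power free sieve and the character sum carefully. The cleanest route is the contour/generating-function method already used in Proposition \ref{CoprSetSize}: write the Dirichlet series $\sum^*_{G} \chi_{F(P_j)}(G)\, u^{\deg G}$ as an Euler product, factor it as a ratio involving $L(u,\chi_{F(P_j)})$ and $L(u^r, \chi_{F(P_j)}^r)$ times a finite correction over primes dividing $F(P_j)$, observe that because $\chi_{F(P_j)}$ is non-principal this ratio is holomorphic on a disc of radius strictly larger than $q^{-1/2}$ (using RH for these $L$-functions), and extract the coefficient of $u^N$ by a contour integral over $|u| = q^{-1/2 + \varepsilon}$. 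One must check that the number of "bad" primes and the degrees of the relevant polynomials stay bounded in terms of $|\lambda|$ only — which they do, since $\deg F(P_j) \leq \sum_j n_j \cdot \frac{n_j}{\deg P_j} \leq |\lambda|^2$ at worst, still $O(1)$ for fixed $\lambda$ — so all such factors are absorbed into the implied constant as $q \to \infty$. A subtlety worth flagging is the case where $\chi_{F(P_j)}^r$ is itself principal or $F(P_j)$ has a square factor, but this only affects the finitely many correction Euler factors and the location of poles of $L(u^r, \chi^r)$ at $|u| = q^{-1/r} > q^{-1/2}$, which is harmless for the contour at radius $q^{-1/2+\varepsilon}$ provided $r \geq 2$; one simply needs $\varepsilon$ small enough that no pole is crossed.
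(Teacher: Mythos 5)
Your proposal is correct and follows essentially the same route as the paper: square-root cancellation for $\left\langle \chi_{F(P_j)}(G)\right\rangle_{\Hh_r(N)}$ via the Euler-product/contour argument of Lemma \ref{ErrTermLem} (the sieved series factoring through $L(u,\chi_{F(P_j)})/\zeta_q(u^r)$ with a finite correction over $P\mid F(P_j)$), combined with the trivial bound $\sum_{\deg(P_j)\mid n_j}\deg(P_j)\ll q^{n_j}$ from Lemma \ref{PPPT}, yielding $ET_\lambda(N)\ll q^{(1-\frac1r)|\lambda|-\frac N2}$ as in Proposition \ref{ErrTermProp} and hence the limit $0$ when $|\lambda|<\frac{rN}{2r-2}$. (Minor quibbles only: $\deg F(P_j)=\sum_j n_j=|\lambda|$ exactly, not $\le|\lambda|^2$, and the modulus dependence that actually appears is $2^{\deg F}$ rather than polynomial, but both are $O(1)$ for fixed $\lambda$, so nothing is affected.)
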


We  first prove square root cancellation for $\left\langle\chi_F(G)\right\rangle_{\Hh_r(N)}$ in the case that $F$ is not an $r$-th power. 

\begin{lem}\label{ErrTermLem}
If $F$ is not an $r$-th power then
$$\left\langle\chi_F(G)\right\rangle_{\Hh_r(N)} \ll \frac{2^{\deg(F)}}{q^{N/2}}\left(1+\frac{\tau(F)}{q}\right)$$
where the implicit constant depends on $r$. 
\end{lem}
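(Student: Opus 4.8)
The plan is to express the average $\langle \chi_F(G) \rangle_{\Hh_r(N)}$ as a sum over $r$-th power free $G$ of a non-trivial character and extract square-root cancellation by a generating function argument mirroring the proof of Proposition~\ref{CoprSetSize}. First I would write
\[
\sum_{G \in \Hh_r(N)} \chi_F(G) = \sideset{}{^*}\sum_{\deg(G) = N} \chi_F(G),
\]
with $*$ denoting $r$-th power free, and form the Dirichlet series $\Ll_F(u) := \sum_{N \geq 0} \left( \sum_{G \in \Hh_r(N)} \chi_F(G) \right) u^N$. Since $\chi_F$ is completely multiplicative and $r$-th power freeness is a local condition, this has an Euler product
\[
\Ll_F(u) = \prod_{P \nmid F} \left( 1 + \chi_F(P) u^{\deg P} + \cdots + \chi_F(P)^{r-1} u^{(r-1)\deg P} \right) \prod_{P \mid F}\left( 1 + 0 + \cdots \right),
\]
and since $\chi_F$ has order dividing $r$, each local factor at $P \nmid F$ is $\frac{1 - \chi_F(P)^r u^{r\deg P}}{1 - \chi_F(P) u^{\deg P}} = \frac{1 - u^{r \deg P}}{1 - \chi_F(P) u^{\deg P}}$. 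Hence
\[
\Ll_F(u) = \frac{L(u,\chi_F)}{\zeta_q(u^r)} \prod_{P \mid F} \frac{1}{1 - u^{r\deg P}},
\]
where $L(u,\chi_F)$ is the Dirichlet $L$-function. This is where the hypothesis that $F$ is not an $r$-th power enters: then $\chi_F$ is a non-trivial character, so by Weil's Riemann Hypothesis $L(u,\chi_F)$ is a polynomial of degree $\leq \deg F$ whose zeros have absolute value $q^{-1/2}$, hence $L(u,\chi_F)$ is holomorphic and nonzero on $|u| < q^{-1/2}$.

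Next I would read off the coefficient by contour integration. The function $\Ll_F(u)$ is holomorphic on the disc $|u| < q^{-1/2}$ except that $\zeta_q(u^r) = (1-qu^r)^{-1}$ contributes no poles there (its poles are at $|u| = q^{-1/r} > q^{-1/2}$) and the finite product $\prod_{P\mid F}(1-u^{r\deg P})^{-1}$ has poles only on $|u| = 1$. So $\Ll_F(u)$ is in fact holomorphic on $|u| < q^{-1/2}$, and on a circle $\Gamma = \{|u| = \rho\}$ with $\rho$ slightly less than $q^{-1/2}$ we get
\[
\sum_{G \in \Hh_r(N)} \chi_F(G) = \frac{1}{2\pi i} \oint_{\Gamma} \frac{\Ll_F(u)}{u^{N+1}} \, du \ll \rho^{-N} \max_{|u| = \rho} |\Ll_F(u)|.
\]
Letting $\rho \to q^{-1/2}$ gives the factor $q^{N/2}$ in the denominator. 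It remains to bound $|\Ll_F(u)|$ on $|u| = q^{-1/2}$: the factor $1/|\zeta_q(u^r)| = |1 - q u^r| \leq 1 + q^{1 - r/2} = O(1)$ since $r \geq 2$; the factor $\prod_{P\mid F} |1 - u^{r\deg P}|^{-1} = \prod_{P\mid F}(1 - q^{-r\deg(P)/2})^{-1} = 1 + O(\tau(F)/q)$ (or more crudely $\leq 2^{\omega(F)} \leq 2^{\deg F}$); and $|L(u,\chi_F)| \leq \sum_{\deg G \leq \deg F} q^{-\deg(G)/2} \leq \tau\text{-type bound} \ll 2^{\deg F}$ by trivially bounding the polynomial of degree $\leq \deg F$ via its defining sum $\sum_{\deg G < \deg F}\chi_F(G)u^{\deg G}$ with $|u| = q^{-1/2}$, giving at most $\sum_{j=0}^{\deg F - 1} q^{j} q^{-j/2} \ll q^{(\deg F)/2} \cdot \text{const}$ — actually here I would be a bit more careful and use that $|L(u,\chi_F)| = \prod |1 - \sqrt q u x_i| \leq 2^{\deg F}$ on $|u| = q^{-1/2}$ since each eigenvalue $x_i$ has modulus $1$. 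Combining, $\Ll_F(u) \ll 2^{\deg F}(1 + \tau(F)/q)$ on the circle, which yields the claimed bound
\[
\left\langle \chi_F(G) \right\rangle_{\Hh_r(N)} \ll \frac{2^{\deg F}}{q^{N/2}}\left(1 + \frac{\tau(F)}{q}\right),
\]
after dividing by $|\Hh_r(N)| \asymp q^N$ — wait, dividing by $q^N$ would give $q^{-N/2}/q^{N}$, so in fact the cleanest route is: $\sum_{G} \chi_F(G) \ll 2^{\deg F} q^{N/2}(1 + \tau(F)/q)$ and $|\Hh_r(N)| = q^N - q^{N+1-r} \gg q^N$, whence the average is $\ll 2^{\deg F} q^{-N/2}(1+\tau(F)/q)$, as desired; with the implicit constant depending on $r$ through the bound on $1/|\zeta_q(u^r)|$ and the number of prime factors handling.

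The main obstacle is the bound on $|L(u, \chi_F)|$ on the critical circle $|u| = q^{-1/2}$: one must be careful that the crude bound $2^{\deg F}$ (from the $\deg F$ eigenvalues each of modulus $1$, so $|\det(1 - \sqrt q u \Theta_{\chi_F})| \leq 2^{\deg F}$ when $|\sqrt q u| = 1$) is genuinely available, which requires knowing $L(u,\chi_F)$ has degree at most $\deg F$ and satisfies RH — both guaranteed by Weil for the non-trivial character $\chi_F$, which is exactly the place the hypothesis "$F$ not an $r$-th power" is used. A secondary subtlety is justifying the contour shift: one needs $\Ll_F(u)$ holomorphic on the closed disc of radius $\rho < q^{-1/2}$, which follows since the only possible singularities come from $1 - q u^r = 0$ at $|u| = q^{-1/r}$ and from $P \mid F$ factors at $|u| = 1$, all outside. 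I would also double check the trivial bound $|L(u,\chi_F)| \le \sum_{j < \deg F} q^j \cdot q^{-j/2}$ versus the eigenvalue bound and use whichever gives the cleaner constant; either way the dependence on $\deg F$ is at worst $2^{\deg F}$, matching the statement.
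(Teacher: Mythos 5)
Your proposal follows essentially the same route as the paper's proof: the same generating series with Euler product factorization $\prod_{P\mid F}(1-u^{r\deg(P)})^{-1}\,L(u,\chi_F)/\zeta_q(u^r)$, the same contour integration on the circle $|u|=q^{-1/2}$, the same bounds on each factor (in particular the Riemann Hypothesis bound $|L(u,\chi_F)|\leq 2^{\deg(F)}$ from eigenvalues of modulus one), and the same final division by $|\Hh_r(N)|\gg q^N$ via Proposition \ref{CoprSetSize}. Your extra care about the contour radius and the choice between the trivial coefficient bound and the eigenvalue bound are fine refinements but do not change the argument.
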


\begin{proof}

If $F$ is not an $r$-th power, then we let us consider the generating series
\begin{align*}
    \mathcal{G}(u,\chi_F) &= \sum_{N=0}^{\infty} \sum_{G\in \Hh_r(N)} \chi_F(G) u^{\deg(G)} \\
    & = \prod_P \left( 1 + \chi_F(P)u^{\deg(P)} + \chi_F(P^2)u^{2\deg(P)} + \cdots +\chi_F(P^{r-1}) u^{(r-1)\deg(P)} \right) \\
    & = \prod_{P\nmid F} \frac{1-u^{r\deg(P)}}{1-\chi_F(P)u^{\deg(P)} } \\
    & = \prod_{P|F} \left(\frac{1}{1-u^{r\deg(P)}}\right) \frac{L(u,\chi_F)}{\zeta_q(u^r)}.
\end{align*} 

Let $\Gamma = \{u: |u| = q^{-1/2}\}$. Then we see that $\mathcal{G}(u,\chi_F)$ is analytic in the region contained by $\Gamma$ so that
$$\sum_{G\in\Hh_r(N)} \chi_F(G) = \frac{1}{2\pi i} \oint_{\Gamma} \frac{\mathcal{G}(u,\chi_F)}{u^{N+1}}du \leq \max_{u\in \Gamma} \left|\frac{\mathcal{G}(u,\chi_F)}{u^{N+1}}\right| =q^{N/2} \max_{u\in \Gamma} |\mathcal{G}(u,\chi_F)| .$$
So it remains to bound $\mathcal{G}(u,\chi_F)$. Indeed we see that
$$\max_{u\in \Gamma} \left| \prod_{P|F} \frac{1}{1-u^{r\deg(P)}} \right| \leq \prod_{P|F} \frac{1}{1-q^{-\frac{r}{2}\deg(P)}} = \prod_{P|F} \left(1 + \frac{1}{q^{\frac{r}{2}\deg(P)} -1 } \right) = 1 +O\left(\frac{\tau(F)}{q}\right)$$
where the last estimate comes from a similar method as in Corollary \ref{CoprCor}. Further,
$$ \max_{u\in\Gamma} \left|\frac{1}{\zeta_q(u^r)}\right| = \max_{u\in\Gamma} \left|1-qu^r\right| \leq 1+q^{1-\frac{r}{2}} \ll 1+O\left(\frac{1}{q}\right).$$
Lastly, since we may write $L(u,\chi_F) = \det(1-\sqrt{q}u\Theta_F)$ for some $\deg(F)\times\deg(F)$ unitary matrix $\Theta_F$, we get that
$$\max_{u\in \Gamma} \left|L(u,\chi_F)\right| = \max_{u\in \Gamma} \left|\det(1-\sqrt{q}u\Theta_F)\right| \leq \left|\det(1-\Theta_F)\right| \leq 2^{\deg(F)}.$$

The result then follows from Proposition \ref{CoprSetSize}.

\end{proof}

\begin{proof}[Proof of Proposition \ref{ErrTermProp}]

Recall that
$$ET_{\lambda}(N) := \frac{(-1)^{\ell}}{q^{\frac{|\lambda|}{r}}} \sum_{\substack{\deg(P_j)|n_j \\ j=1,\dots,\ell \\ F(P_j) \not= F^r }} \left(\prod_{j=1}^{\ell} \deg(P_j)\right) \left\langle \chi_{F(P_j)}\left(G\right)\right\rangle_{\Hh_r(N)}  \mbox{ where }  F(P_j) := \prod_{j=1}^{\ell} P_j^{\frac{n_j}{\deg(P_j)}}$$
so that $\deg(F(P_j)) = |\lambda| = O(1)$ and $\tau(F) = 2^{\ell}=O(1)$. Then applying Lemma \ref{ErrTermLem}, we get
\begin{align*}
    ET_{\lambda}(N) & \ll \frac{1}{q^{\frac{\lambda}{r} + \frac{N}{2}}} \sum_{\substack{\deg(P_j)|n_j \\ j=1,\dots,\ell \\ F(P_j) \not= F^r }} \left(\prod_{j=1}^{\ell} \deg(P_j)\right) \\
    & \ll  \frac{1}{q^{\frac{\lambda}{r} + \frac{N}{2}}} \prod_{j=1}^{\ell} \left( \sum_{\deg(P)|n_j} \deg(P) \right) \\
    & = \frac{1}{q^{\frac{\lambda}{r} + \frac{N}{2}}} \prod_{j=1}^{\ell} q^{n_j} = q^{(1-\frac{1}{r})|\lambda|-\frac{N}{2}}
\end{align*}

\end{proof}

\section{Combinatorial Description of the Main Term}\label{CombSec}

In this section we will determine for which partitions $\lambda$ does 
$$\lim_{q\to\infty} MT_{\lambda} \not=0$$
and moreover give a combinatorial description of the limit.

\subsection{Set Partitions}
Recall that $MT_\lambda$ can be written as a sum over tuples of primes $(P_1,P_2,\dots,P_{\ell})$ such that $F(P_j) := \prod_{j=1}^{\ell} P_j^{\frac{n_j}{\deg(P_j)}}$ is an $r$-th power. Since the $P_j$ are primes this happens only when we can find disjoint subsets $J_1,\dots,J_t\subset \{1,\dots,\ell\}$ and primes $Q_1,\dots,Q_t$ such that
\begin{primenumerate}
    \item $\{1,\dots,\ell\} = \bigsqcup_{i=1}^{t} J_i$
    \item $P_j=Q_i$ for all $j\in J_i$
    \item $\sum_{j\in J_i} \frac{n_j}{\deg(P_j)} = \frac{\sum_{j\in J_i} n_j }{\deg(Q_i)} \equiv 0 \bmod{r}.$
\end{primenumerate}

Firstly, we see that condition $(1')$ is the definition for a set of subsets to form a \textit{set partition} of $\{1,\dots,\ell\}$. Secondly, we always have $\deg(P_j)|n_j$; hence if $Q_i=P_j$ for all $j\in J_i$, then $\deg(Q_i)|n_j$ for all $j$. Further, condition $(3')$ imposes the extra condition that $r | \sum_{j\in J_i} n_j$ and $\deg(Q_i)|\frac{1}{r}\sum_{j\in J_i}n_j$. Thus, we may replace conditions $(1')$, $(2')$, and $(3')$ with the following new conditions $(1)$, $(2)$ and $(3)$ 
\begin{enumerate}
    \item $J_1,\dots,J_t$ is a set partition of $\{1,\dots,\ell\}$;
    \item $\deg(Q_i)| \gcd(\frac{N_i}{r},\gcd_{j\in J_i} (n_j))$ for $i=1,\dots,t$;
    \item $N_i:=\sum_{j\in J_i} n_j \equiv 0 \bmod{r}$ for $i=1,\dots,t$.
\end{enumerate}

We would like to then write $MT_\lambda$ as a sum over all such sets and primes satisfying $(1)-(3)$. One obvious way to do this is to insist that the primes $Q_i$ are distinct. However, the notation needed quickly gets messy. A more convenient way to do it is to insist that the $J_i$ are as small as can be by the following fourth condition:
\begin{enumerate} \setcounter{enumi}{3}
    \item For any $J_i$ there does not exist a $J'_i \subsetneq J_i$ such that $\sum_{j\in J'_i} n_j \equiv 0 \bmod{r}.$
\end{enumerate}

Thus we get
\begin{align*}
    MT_\lambda & = \frac{(-1)^{\ell}}{q^{\frac{|\lambda|}{r}}} \sum_{\substack{\deg(P_j)|n_j \\ j=1,\dots,\ell \\ F(P_j) = F^r }} \prod_{j=1}^{\ell} \deg(P_j) =\frac{(-1)^{\ell}}{q^{\frac{|\lambda|}{r}}} \sum_{t=1}^{\ell} \sideset{}{^*}\sum_{\substack{J_i,Q_i \\ i=1,\dots,t}} \prod_{i=1}^t \deg(Q_i)^{|J_i|} 
\end{align*}
where the $^*$ indicates we are imposing the conditions $(1)-(4)$ above.

Before we continue with computing $MT_\lambda$, we will prove a brief Lemma showing that under the conditions imposed the sets $J_i$ have size bounded by $r$.

\begin{lem}\label{leqrLem}
For any sequence of positive integers $m_1,\dots,m_k$ such that $\sum_{j=1}^k m_j \equiv 0 \bmod{r}$, then we can find a subset $J\subset \{1,\dots,k\}$ with $|J|\leq r$ such that $\sum_{j\in J} m_j \equiv 0\bmod{r}$.
\end{lem}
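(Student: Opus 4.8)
The plan is to use a pigeonhole argument on the partial sums modulo $r$. First I would consider the $k+1$ partial sums $s_0 := 0$ and $s_i := m_1 + m_2 + \cdots + m_i$ for $i = 1, \dots, k$, and reduce each modulo $r$. If $k \leq r$ we are already done, taking $J = \{1,\dots,k\}$, since the hypothesis gives $s_k \equiv 0 \bmod r$; so assume $k > r$. Then among the $r+1$ residues $s_0 \bmod r, s_1 \bmod r, \dots, s_r \bmod r$ there must be a repetition by pigeonhole: there exist $0 \leq a < b \leq r$ with $s_a \equiv s_b \bmod r$. Setting $J := \{a+1, a+2, \dots, b\}$ gives $\sum_{j \in J} m_j = s_b - s_a \equiv 0 \bmod r$, and $|J| = b - a \leq r$. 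This $J$ is a nonempty subset of $\{1,\dots,k\}$ with the desired properties.

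The one point that needs a little care is ensuring $J$ is nonempty and genuinely a subset of $\{1,\dots,k\}$: nonemptiness follows from $a < b$, and $J \subseteq \{1,\dots,k\}$ follows from $b \leq r \leq k$ in the case we are considering. I do not expect any real obstacle here — the statement is a standard Erd\H{o}s--Ginzburg--Ziv-flavoured pigeonhole fact, and the only subtlety is bookkeeping the index ranges so that exactly $r+1$ partial sums (including the empty one) are in play, which forces $|J| \leq r$ rather than $|J| \leq r+1$. It may be worth remarking in the proof that this is precisely why condition $(4)$ above forces each $J_i$ to have $|J_i| \leq r$: given any $J_i$ with $\sum_{j \in J_i} n_j \equiv 0 \bmod r$, the lemma produces $J_i' \subseteq J_i$ with $|J_i'| \leq r$ and the same divisibility, so minimality under $(4)$ gives $|J_i| = |J_i'| \leq r$.
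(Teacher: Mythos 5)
Your proof is correct and is essentially the paper's own argument: both handle $k\leq r$ trivially and otherwise apply pigeonhole to $r+1$ partial sums modulo $r$, taking $J$ to be the interval of indices between two congruent partial sums (you include the empty sum $s_0=0$ and use $s_0,\dots,s_r$, while the paper uses $s_1,\dots,s_{r+1}$ — an immaterial shift). No gaps; the bookkeeping you flag is handled correctly.
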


\begin{proof}
Firstly, if $k\leq r$, then we take $J = \{1,\dots,k\}$ and the statement holds. Now if $k>r$, then consider the sums $s_i := \sum_{j=1}^i m_j$. By the pigeonhole principle, there must be an $1\leq i<i' \leq r+1$ such that $s_i\equiv s_{i'}\bmod{r}$. Hence taking $J = \{i+1,\dots,i'\}$  suffices.
\end{proof}

Now, for a fixed $J_1,\dots,J_t$, if we denote
$$g_i := \gcd\left(\frac{N_i}{r}, \gcd_{j\in J_i} (n_j) \right)$$
then we can now easily isolate condition $2$ and apply Lemma \ref{PPPT} to get that 
\begin{align*}
\sum_{\substack{\deg(Q_i) | g_i \\ i=1,\dots t }} \prod_{i=1}^t \deg(Q_i)^{|J_i|} & = \prod_{i=1}^t \sum_{\deg(Q)|g_i} \deg(Q)^{|J_i|} = \prod_{i=1}^t \left(g_i^{|J_i|-1}q^{g_i} + O\left(q^{\frac{g_i}{2}}\right)\right) \\
& = \prod_{i=1}^t g_i^{|J_i|-1}q^{g_i}\left(1+O\left(q^{-1/2}\right)\right)
\end{align*}

By definition we have that $g_i\leq \frac{N_i}{r}\leq\frac{|\lambda|}{r}$ and so if one $g_i\leq \frac{N_i}{r}-1$, then we get that
$$\prod_{i=1}^t g_i^{|J_i|-1} q^{g_i} \leq \frac{1}{q} \prod_{i=1}^t \left(\frac{N_i}{r}\right)^{|J_i|-1} q^{\frac{N_i}{r}} \leq \frac{1}{q} \left(\frac{|\lambda|}{r}\right)^\ell q^{\frac{|\lambda|}{r}}.$$
Thus the contribution from the case where one of the $g_i \leq \frac{N_i}{r}-1$ will be bounded by
\begin{align*}
    \frac{1}{q} \left(1+O(q^{-1/2})\right) \left(\frac{|\lambda|}{r}\right)^\ell \sum_{t=1}^{\ell} \sum_{\substack{J_1,\dots,J_t \\ (1),(3),(4)}} 1 \ll \frac{1}{q}
\end{align*}
where the implied constant depends only on $\lambda$. 

Conversely, if $g_i = \frac{N_i}{r}$ for all $i$, then we get
$$\prod_{i=1}^t g_i^{|J_i|-1}q^{g_i} = q^{\frac{|\lambda|}{r}} \prod_{i=1}^t \left(\frac{N_i}{r}\right)^{|J_i|-1}.$$
Therefore, if we create the last condition
\begin{enumerate}\setcounter{enumi}{4}
\item $g_i = \frac{N_i}{r}$ for $i=1,\dots,t.$
\end{enumerate}
Then we may conclude that 
\begin{align}\label{MTqlim}
    MT_\lambda = (-1)^{\ell} \sum_{t=1}^{\ell} \sideset{}{^{**}}\sum_{J_i} \prod_{i=1}^t \left(\frac{N_i}{r}\right)^{|J_i|-1} + O\left(q^{-1/2}\right)
\end{align}
where the $^{**}$ denotes that we are summing over all $J_1,\dots,J_t$ that satisfy conditions $(1),(3),(4)$ and $(5)$. 

We can now write down the left hand side of Theorem \ref{MainThm} in a maybe not so clear way.

\begin{prop}\label{SubsetProp}
For any $N\geq r\geq 2$ such that $q\equiv 1 \bmod{2r}$ and any partition $\lambda$ such that $|\lambda|<\frac{rN}{2r-2}$ then
$$\lim_{q\to\infty} \left\langle q^{\frac{r-2}{r}|\lambda|} P_\lambda(\Theta_L) \right\rangle_{\F_r(N)} = (-1)^{\ell} \sum_{t=1}^{\ell} \sideset{}{^{**}}\sum_{J_i} \prod_{i=1}^t \left(\frac{N_i}{r}\right)^{|J_i|-1}$$
where the notation for the right hand side in defined in the subsection above.
\end{prop}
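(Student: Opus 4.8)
The plan is to assemble Proposition~\ref{SubsetProp} directly from the pieces already in place. Recall the decomposition
$$\left\langle q^{\frac{r-2}{2r}|\lambda|} P_\lambda(\Theta_G)\right\rangle_{\Hh_r(N)} = MT_\lambda(N) + ET_\lambda(N),$$
which follows by splitting the sum in \eqref{Plambform2} into the terms where $F(P_j)$ is an $r$-th power (giving $MT_\lambda(N)$, see \eqref{MainTerm}) and those where it is not (giving $ET_\lambda(N)$, see \eqref{ErrTerm}). Here I note that $\F_r(N)$ and $\Hh_r(N)$ index the same average, so $\left\langle \cdot \right\rangle_{\F_r(N)} = \left\langle \cdot\right\rangle_{\Hh_r(N)}$. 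Also I should be careful that the exponent in the statement of Proposition~\ref{SubsetProp} reads $q^{\frac{r-2}{r}|\lambda|}$, whereas \eqref{Plambform2} has $q^{\frac{r-2}{2r}|\lambda|}$; I would either correct this typo to $\frac{r-2}{2r}$ or note that the statement is meant with the normalization of Theorem~\ref{MainThm}. Modulo that, the left-hand side is exactly $MT_\lambda(N)+ET_\lambda(N)$.

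Next I would dispose of the error term. By Proposition~\ref{ErrTermProp}, $ET_\lambda(N) \ll q^{(1-\frac1r)|\lambda| - \frac N2}$, and the hypothesis $|\lambda| < \frac{rN}{2r-2}$ is exactly the condition under which $(1-\frac1r)|\lambda| - \frac N2 < 0$ (multiply through: $\frac{2r-2}{2r}|\lambda| < \frac N2 \iff (2r-2)|\lambda| < rN \iff |\lambda| < \frac{rN}{2r-2}$). Hence $\lim_{q\to\infty} ET_\lambda(N) = 0$, as already recorded in the corollary following Proposition~\ref{ErrTermProp}. So the limit of the left-hand side equals $\lim_{q\to\infty} MT_\lambda(N)$.

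Then I would replace $MT_\lambda(N)$ by $MT_\lambda$: Corollary~\ref{MainTermCor} shows $MT_\lambda(N) = MT_\lambda(1 + O(q^{-1}))$ with $MT_\lambda$ as in \eqref{MT}, independent of $N$; in particular $\lim_{q\to\infty} MT_\lambda(N) = \lim_{q\to\infty} MT_\lambda$ provided this last limit exists. Finally, \eqref{MTqlim} — obtained from the set-partition analysis via conditions $(1')$--$(3')$, their reformulation as $(1)$--$(3)$, the minimality condition $(4)$, Lemma~\ref{leqrLem}, and the application of Lemma~\ref{PPPT} to isolate the dominant contribution when $g_i = N_i/r$ (condition $(5)$) — gives
$$MT_\lambda = (-1)^\ell \sum_{t=1}^\ell \sideset{}{^{**}}\sum_{J_i} \prod_{i=1}^t \left(\frac{N_i}{r}\right)^{|J_i|-1} + O(q^{-1/2}),$$
whose limit as $q\to\infty$ is the claimed finite combinatorial sum. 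Chaining these equalities proves the proposition.

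There is no serious obstacle here: this proposition is a bookkeeping assembly of results that have all been proved earlier in the section. The only points needing care are (i) confirming the index-set identification $\F_r(N)\leftrightarrow\Hh_r(N)$ so the averages literally agree, (ii) the arithmetic translating $|\lambda| < \frac{rN}{2r-2}$ into negativity of the $ET_\lambda$ exponent, and (iii) making sure the three successive "$\lim$" replacements are legitimate — each error introduced is $O(q^{-1/2})$ or $O(q^{-1})$ times a quantity that is $O(1)$ as $q\to\infty$ (the combinatorial sums depend only on $\lambda$, not $q$), so they all vanish in the limit. The mild exponent discrepancy in the displayed normalization is a typo to reconcile, not a mathematical difficulty.
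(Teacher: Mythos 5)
Your proof is correct and follows essentially the same route as the paper's own (very brief) proof: split the average into $MT_\lambda(N)+ET_\lambda(N)$, discard the error term via Proposition \ref{ErrTermProp} using $|\lambda|<\frac{rN}{2r-2}$, pass from $MT_\lambda(N)$ to $MT_\lambda$ via Corollary \ref{MainTermCor}, and take the $q$-limit of \eqref{MTqlim}. Your remark that the displayed normalization $q^{\frac{r-2}{r}|\lambda|}$ should read $q^{\frac{r-2}{2r}|\lambda|}$ to match \eqref{Plambform2} and Theorem \ref{MainThm} correctly identifies a typo in the paper rather than a mathematical issue.
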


\begin{proof}
Indeed, combining the work from Section \ref{TraceSect} with the above subsection we get
\begin{align*}
    \lim_{q\to\infty} \left\langle q^{\frac{r-2}{r}|\lambda|} P_\lambda(\Theta_L) \right\rangle_{\F_r(N)} & = \lim_{q\to\infty} \left(MT_\lambda(N)+ET_\lambda(N)\right) \\
    & = \lim_{q\to\infty} MT_{\lambda} \\ 
    & = (-1)^{\ell} \sum_{t=1}^{\ell} \sideset{}{^{**}}\sum_{J_i} \prod_{i=1}^t \left(\frac{N_i}{r}\right)^{|J_i|-1}
\end{align*}
\end{proof}


\subsection{Partition Decomposition}

As we saw in Lemma \ref{leqrLem}, conditions $(3)$ and $(4)$ imply that $|J_i|\leq r$. Now, we prove that condition $(5)$ implies that the $J_i$ must actually look like partitions of $r$.

\begin{lem}\label{PartLem1}
Let $m_1,\dots,m_k$ be positive integers such that 
\begin{enumerate}
    \item $M:=\sum_{j=1}^k m_j \equiv 0 \bmod{r}$
    \item $\frac{M}{r} = \gcd\left(\frac{M}{r}, m_1,\dots,m_k\right).$
\end{enumerate}
Then there must be positive integers $m'_1,\dots,m'_k$ such that $m'_1+\cdots+m'_k = r$ and $m_i = \frac{M}{r}m'_i$.
\end{lem}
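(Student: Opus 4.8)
The plan is to show that condition (2) forces every $m_i$ to be a multiple of $\frac{M}{r}$, after which the quotients $m'_i := m_i / \frac{M}{r}$ are automatically positive integers summing to $r$. So the whole lemma reduces to the divisibility claim $\frac{M}{r} \mid m_i$ for each $i$.

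Write $g := \frac{M}{r}$ for brevity. The hypothesis (2) says $g = \gcd(g, m_1, \dots, m_k)$, i.e. $g$ divides $\gcd(g, m_1, \dots, m_k)$; since $\gcd(g, m_1, \dots, m_k)$ obviously divides $g$, the two are equal, and in particular $g \mid \gcd(m_1, \dots, m_k)$ — wait, that's not immediate, so let me be careful. The correct reading is: $\gcd(g, m_1, \dots, m_k) = g$, and a gcd of a collection equals $g$ only if $g$ divides every member of the collection; hence $g \mid m_i$ for all $i$. That is the key step, and it is essentially a one-line unwinding of the definition of gcd. Then set $m'_i := m_i/g \in \Z_{\geq 1}$ (positivity because $m_i \geq 1$ and $m_i$ is a positive multiple of $g$). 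Summing, $\sum_i m'_i = \frac{1}{g}\sum_i m_i = \frac{M}{g} = \frac{M}{M/r} = r$, which is exactly the claim.

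I would present it in that order: first restate (2) as "$g$ divides $m_i$ for every $i$", then define $m'_i$, then check positivity and the sum. There is essentially no obstacle here — the only thing to be slightly careful about is the logical direction in unwinding $\gcd(g, m_1, \dots, m_k) = g$ (one must note $g \mid m_i$ follows because the left side, being a common divisor hypothesis turned equality, divides each $m_i$; equivalently, if some $m_i$ were not divisible by $g$ the gcd would be a proper divisor of $g$). Everything else is a routine division. The only mild subtlety worth flagging is why $m'_i \geq 1$ rather than merely $\geq 0$: each $m_i$ is a \emph{positive} integer divisible by $g$, hence $m_i \geq g$, hence $m'_i \geq 1$; this is what guarantees we land on genuine compositions of $r$ into exactly $k$ positive parts, matching the later use of the lemma.

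\begin{proof}
Write $g := \frac{M}{r}$, which is a positive integer by hypothesis $(1)$. Hypothesis $(2)$ states that $\gcd(g, m_1, \dots, m_k) = g$. Since the greatest common divisor of a collection of integers equals $g$ only if $g$ divides each member of that collection, we conclude $g \mid m_i$ for every $i = 1, \dots, k$. Define $m'_i := m_i / g$. Each $m'_i$ is a positive integer: it is an integer by the divisibility just established, and it is positive (indeed $\geq 1$) because $m_i$ is a positive integer divisible by $g$, hence $m_i \geq g$. Finally,
$$\sum_{j=1}^k m'_j = \frac{1}{g}\sum_{j=1}^k m_j = \frac{M}{g} = \frac{M}{M/r} = r,$$
and by construction $m_i = g\, m'_i = \frac{M}{r} m'_i$, as claimed.
\end{proof}
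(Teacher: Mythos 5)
Your proof is correct and follows the same route as the paper's: condition $(2)$ forces $\frac{M}{r}\mid m_i$ for every $i$, after which the quotients $m'_i$ are positive integers summing to $r$ by dividing the identity $M=\sum_j m_j$. You merely spell out the divisibility unwinding and the positivity of the $m'_i$ more explicitly than the paper does, which is fine.
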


\begin{proof}
Supposition $(2)$ implies that $\frac{M}{r} | m_i$ for all $i$ and so we can find positive integers $m'_i$ such that $m_i = \frac{M}{r} m'_i$. Now, we see that
$$M = \sum_{j=1}^k m_j = \frac{M}{r} \sum_{j=1}^k m'_j$$
which concludes the proof.
\end{proof}

The sets $J_i$  were constructed as the ways to decompose the partition $\lambda = (n_1,\dots,n_\ell)$ into subsets that contribute in the $q$-limit. Further, we have just now seen that these $J_i$ are themselves related to partitions of $r$. This then motivates us to talk about generating partitions out of a finite set of partition.

We can define an operation on partition in the following way. If we have two partitions, $\lambda,\mu$ such that  $\lambda = 1^{\lambda_1}2^{\lambda_2} \cdots k^{\lambda_k}$ and $\mu = 1^{\mu_1}2^{\mu_2}\cdots k^{\mu_k}$, then we define
$$\lambda \cdot \mu := 1^{\lambda_1+\mu_1} 2^{\lambda_2+\mu_2} \cdots k^{\lambda_k+\mu_k}.$$
By repetition we can then define for any positive integer, $a$
$$\lambda^a := \lambda\cdot\lambda\cdots\lambda = 1^{a\lambda_1}2^{a\lambda_2}\cdots k^{a\lambda_k}.$$
We extend this also to the case $a=0$ by setting $\lambda^0 = 1^02^0\cdots k^0 = (0)$, the $0$-partition. We can further define an action of a positive integer, $j$, by setting
$$j\lambda := j^{\lambda_1} (2j)^{\lambda_2} \cdots (kj)^{\lambda_k}.$$

Now, if we let $P_r$ be the set of partitions of $r$, we define now the set of partitions generated by $P_r$:
\begin{align}\label{PSr}
\WP_r := \left\{ \prod_{j=1}^{\infty} \prod_{\mu\in S_r} (j\mu)^{a_{j\mu}} : a_{j\mu}\geq 0 \mbox{ and only finitely many $a_{j\mu}\not=0$ }  \right\}.
\end{align}
Given any tuple $(a_{j\mu})_{j,\mu}$ where $j=1,\dots,\infty$, $\mu\in P_r$ and only finitely many $a_{j\mu}\not=0$, we define
$$\lambda(a_{j\mu}) = \lambda((a_{j\mu})_{j,\mu}) := \prod_{j=1}^{\infty} \prod_{\mu\in P_r} (j\mu)^{a_{j\mu}}.$$

Of course there may be multiple different tuples $(a_{j\mu})$ that give the same partition $\lambda$ in this way. For example, if $r=2$ then $P_2 = \{(1,1),(2)\}$ and if we set $a_{2(1,1)}=1$ and $a_{j\mu}=0$ otherwise  while we set $b_{(2)}=2$ and $b_{j\mu}=0$ otherwise, then we get
$$\lambda(a_{j\mu}) = (2,2) = \lambda(b_{j\mu}).$$

Before continuing, we will define one more bit of notation. If $\lambda = (n_1,n_2\dots,n_\ell)$, then we will denote $\{\lambda\} = \{n_1,n_2,\dots,n_\ell\}$ as the multiset consisting of the distinguished components of $\lambda$. Further, we will recall that two multisets are equal if and only if each element occurs in both multisets with equal multiplicity.  

\begin{lem}\label{PartSetCorr}
For a given partition $\lambda$ there is a correspondence between sets $J_1,\dots,J_t$ that satisfy conditions $(1)$, $(3)$, $(4)$ and $(5)$ and ways to write $\lambda = \lambda(a_{j\mu})$
\end{lem}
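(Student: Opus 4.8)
The plan is to set up an explicit bijection between the two sides. Given a collection $J_1,\dots,J_t$ satisfying $(1),(3),(4),(5)$, I would associate to each block $J_i$ a partition of $r$ as follows: by Lemma \ref{PartLem1} applied to the integers $(n_j)_{j\in J_i}$ (whose sum is $N_i$, divisible by $r$ by $(3)$, and with $\gcd(N_i/r,\gcd_{j\in J_i}n_j)=N_i/r$ by $(5)$), there are positive integers $m'_j$ with $\sum_{j\in J_i}m'_j=r$ and $n_j = \tfrac{N_i}{r}m'_j$. Thus $\mu^{(i)} := (m'_j)_{j\in J_i}$ is a partition of $r$, and every $n_j$ with $j\in J_i$ equals $\tfrac{N_i}{r}$ times a part of $\mu^{(i)}$; equivalently $\{n_j : j\in J_i\}$ (as a multiset) equals $\tfrac{N_i}{r}\mu^{(i)}$ in the notation $j\mu$ introduced above. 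Collecting over all blocks, the multiset $\{\lambda\} = \{n_1,\dots,n_\ell\}$ is the disjoint union of the multisets $\tfrac{N_i}{r}\mu^{(i)}$, which is exactly the statement that $\lambda = \prod_i (\tfrac{N_i}{r}\mu^{(i)})$, i.e. $\lambda = \lambda(a_{j\mu})$ where $a_{j\mu}$ counts how many blocks $J_i$ have $N_i/r = j$ and $\mu^{(i)}=\mu$.

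Conversely, given a way to write $\lambda = \lambda(a_{j\mu})$, I would unpack the product $\prod_{j,\mu}(j\mu)^{a_{j\mu}}$ into a list of $a_{j\mu}$ copies of the partition $j\mu$ for each pair $(j,\mu)$; each such copy is a block of parts of $\lambda$, and since the parts of $\lambda$ are indexed by $\{1,\dots,\ell\}$ this yields a set partition $J_1,\dots,J_t$ of $\{1,\dots,\ell\}$ (here one has to track \emph{which} indices $j$ go to which block — this is part of the data of writing $\lambda$ as a product, since the multiset equality must be upgraded to an actual assignment of indices; I will need to be slightly careful about stating the correspondence at the level of indexed tuples rather than multisets, or else count with the appropriate multiplicities). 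For a block coming from a copy of $j\mu$ with $\mu\in P_r$ a partition of $r$, one checks: its parts sum to $j\cdot r$, so $N_i = jr \equiv 0 \bmod r$, giving $(3)$; $N_i/r = j$ and each part of the block is $j$ times a part of $\mu$, so $\gcd(N_i/r, \gcd_{\text{parts}}) = \gcd(j, j\cdot\gcd(\mu)) = j = N_i/r$, giving $(5)$; and $(4)$ — no proper nonempty sub-block has part-sum divisible by $r$ — follows because a proper subset of the parts of $j\mu$ has part-sum $j\cdot(\text{proper subsum of }\mu)$ which is divisible by $r=|\mu|$ only if the subsum of $\mu$ is a multiple of $r/\gcd(j,r)$; since $\mu$ is a partition of $r$ with all parts positive, any proper nonempty subsum lies strictly between $0$ and $r$, and I need this to force non-divisibility. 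Actually this last point requires $\gcd(j,r)=1$ or a more careful argument, so I would either restrict the generating set or re-examine condition $(4)$ — see below.

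The two constructions are mutually inverse essentially by unwinding definitions: starting from $J_1,\dots,J_t$, forming the $\mu^{(i)}$ and $j_i = N_i/r$, then re-expanding $\prod(j_i\mu^{(i)})$ recovers the original blocks (with their index sets), and vice versa. So the core of the write-up is: (i) invoke Lemma \ref{PartLem1} for the forward direction; (ii) verify conditions $(3),(4),(5)$ for the backward direction; (iii) check that the maps compose to the identity. I expect step (ii), specifically verifying condition $(4)$ (minimality of the blocks), to be the main obstacle: one must show that a block equal to a copy of $j\mu$ with $\mu\vdash r$ has \emph{no} proper nonempty subset with part-sum $\equiv 0 \bmod r$. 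The cleanest route is probably to observe that $(4)$ is in fact \emph{automatic} given $(3)$ and $(5)$ together with minimality being interpreted correctly — or to note, as the text already did via Lemma \ref{leqrLem}, that $(3)$ and $(4)$ force $|J_i|\le r$, and then argue that for such small blocks condition $(5)$ pins down the structure completely. If $(4)$ genuinely fails for some $j\mu$ (e.g. $r=4$, $j=2$, $\mu=(1,1,1,1)$ has the proper subset $(1,1,1,1)$... no; but $(2,2)$ as parts of $2\mu$ where $\mu=(1,1)$ vs sub-block summing to... ), I would instead prove a weaker correspondence: a surjection with controlled fibers, which is all that is needed for computing $\lim_{q\to\infty}MT_\lambda$ — but I anticipate the bijection as stated does hold once the indexing subtlety is handled, and I would verify the small cases $r=2,3$ explicitly as a sanity check before committing to the general argument.
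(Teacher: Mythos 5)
Your construction is the same one the paper itself uses (forward direction via Lemma \ref{PartLem1}, backward direction by unpacking the copies of $j\mu$ into blocks of indices), and the obstacle you isolated at condition $(4)$ is genuine: $(4)$ simply fails for blocks of the form $j\mu$ once $\gcd(j,r)>1$. The smallest example is $r=2$, $\lambda=(2,2)$: the decomposition $\lambda=2\cdot(1,1)$ forces the single block $J_1=\{1,2\}$, which violates $(4)$ because the proper subset $\{1\}$ already has $n_1=2\equiv 0\bmod 2$ (your half-finished example $r=4$, $j=2$, $\mu=(2,2)$, block $(4,4)$, works as well). So the backward direction cannot produce blocks satisfying $(1),(3),(4),(5)$, and no amount of care with the indexing subtlety repairs this: a decomposition using such a $j\mu$ corresponds to no $(4)$-admissible set partition at all. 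The paper's own proof glosses over exactly this point, asserting that Lemma \ref{PartLem1} yields $(3)$, $(4)$ and $(5)$, which is true only for $(3)$ and $(5)$; your hope that ``the bijection as stated does hold once the indexing subtlety is handled'' is therefore misplaced.

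Your proposed fallback, weakening the statement by dropping $(4)$ and proving a surjection with controlled fibers, is the correct resolution, and it is what the rest of the paper implicitly does: the ``type'' bookkeeping in Lemmas \ref{BijLem} and \ref{NumBijs} never imposes $(4)$, and the fiber size is exactly the multiplicity $C_r(a_{j\mu})$ (this also answers your worry about indexed tuples versus multisets: the correspondence is many-to-one, not a bijection). Discarding $(4)$ is not merely permissible but necessary: for $r=2$, $\lambda=(2,2)$, a direct evaluation of $MT_\lambda$ (the diagonal tuples $P_1=P_2=Q$ with $\deg Q=2$ contribute $2$, the off-diagonal degree-one tuples contribute $1$) gives limit $3$, agreeing with Theorem \ref{CombThm} and with $\int_{USp(2N)}\Tr(U^2)^2\,dU=3$, whereas restricting to $(4)$-admissible set partitions would give $1$. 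So the statement you should actually prove is a correspondence, with multiplicity $C_r(a_{j\mu})$, between ways of writing $\lambda=\lambda(a_{j\mu})$ and set partitions satisfying $(1),(3),(5)$ only; correspondingly, the earlier reduction of $MT_\lambda$ to ``minimal'' set partitions (where $(4)$ was introduced to avoid insisting the $Q_i$ be distinct) must be reworked so that $(5)$, not $(4)$, carries the load. Carrying out the $r=2$ sanity check you planned would have surfaced all of this.
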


\begin{proof}
Indeed if we have such sets $J_1,\dots,J_t$ then Lemma \ref{PartLem1} implies that there exists an $M_i$ and partition $\mu_i$ of $r$ that 
$$\{n_j : j\in J_i\} = \{M_i\mu_i\}$$
are equal as multisets where we have set $\lambda=(n_1,n_2,\dots,n_\ell)$. Then by setting
$$a_{j\mu} = \#\{1\leq i \leq t : M_i=j, \mu_i=\mu\}$$ we get that $\lambda = \lambda(a_{j\mu})$. 

Conversely, if 
$$\lambda = \lambda(a_{j\mu}) = \prod_{j=1}^\infty \prod_{\mu\in S_r} (j\mu)^{a_{j\mu}}$$
then in particular, we have
\begin{align}\label{MultiSetEq}
\{\lambda\} = \bigcup_{j=1}^{\infty} \bigcup_{\mu\in S_r} \left(\{j\mu\} \cup \{j\mu\} \cup \cdots \cup \{j\mu\}\right)
\end{align}
are equal as multisets where there are $a_{j\mu}$ copies of $\{j\mu\}$. Thus if $\lambda= (n_1,n_2,\dots,n_\ell)$ then each copy of $\{j\mu\}$ will then be equal to $\{n_i : i\in J_{j\mu}\}$ for some $J_{j\mu}\subset\{1,\dots,\ell\}$. Further, since \eqref{MultiSetEq} is an equality of multisets, we may chose the $J_{j\mu}$ disjoint (so they satisfy condition $(1)$) and Lemma \ref{PartLem1} implies that they must also satisfy conditions $(3)$, $(4)$ and $(5)$.  
\end{proof}

We see that we can immediately determine the left hand side of Theorem \ref{MainThm} for a large class of partitions $\lambda$.

\begin{cor}
For any $N\geq r\geq 2$  such that $q\equiv 1 \bmod{2r}$ and partition $\lambda$ such that $|\lambda| < \frac{rN}{2r-2}$, then if $\lambda \not\in\WP_r$, we get
$$\lim_{q\to\infty} \left\langle q^{\frac{r-2}{2r}|\lambda|} P_\lambda(\Theta_L) \right\rangle_{\F_r(N)} = 0  $$
\end{cor}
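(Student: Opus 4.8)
The plan is to read this corollary off directly from Proposition \ref{SubsetProp} together with Lemma \ref{PartSetCorr}; no new estimates are required. By Proposition \ref{SubsetProp}, under the stated hypotheses $N\geq r\geq 2$, $q\equiv 1\bmod{2r}$ and $|\lambda|<\frac{rN}{2r-2}$, we have
$$\lim_{q\to\infty} \left\langle q^{\frac{r-2}{2r}|\lambda|} P_\lambda(\Theta_L) \right\rangle_{\F_r(N)} = (-1)^{\ell} \sum_{t=1}^{\ell} \sideset{}{^{**}}\sum_{J_i} \prod_{i=1}^t \left(\frac{N_i}{r}\right)^{|J_i|-1},$$
where the $^{**}$-sum ranges over all set partitions $J_1,\dots,J_t$ of $\{1,\dots,\ell\}$ obeying conditions $(1)$, $(3)$, $(4)$ and $(5)$ introduced above. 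Thus the entire task reduces to showing that this index set of set partitions is empty whenever $\lambda\notin\WP_r$, in which case the resulting empty sum is $0$ and the limit vanishes.

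First I would invoke Lemma \ref{PartSetCorr}, which places the family of such set partitions $J_1,\dots,J_t$ of $\{1,\dots,\ell\}$ in correspondence with the ways of writing $\lambda=\lambda(a_{j\mu})$ for an admissible tuple $(a_{j\mu})$. In particular, the mere existence of one admissible family $(J_i)$ would, via the construction in that lemma (taking $a_{j\mu} := \#\{1\leq i\leq t : M_i=j,\ \mu_i=\mu\}$), produce at least one representation $\lambda=\lambda(a_{j\mu})$. But by the definition \eqref{PSr} of $\WP_r$, the existence of such a representation is precisely the statement $\lambda\in\WP_r$. Contrapositively, if $\lambda\notin\WP_r$ then no admissible family $(J_i)$ exists, the $^{**}$-sum is over the empty set, and hence the normalized $q$-limit is $0$.

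There is essentially no obstacle here: all of the substance is already contained in Proposition \ref{SubsetProp} and Lemma \ref{PartSetCorr}. The only points worth flagging are that an empty sum is to be read as $0$ — consistent with the heuristic that the normalized $q$-limit of $P_\lambda$ should vanish unless $\lambda$ is ``built out of'' partitions of $r$ — and that we only need one direction of the correspondence in Lemma \ref{PartSetCorr}, namely that any admissible $(J_i)$ forces $\lambda\in\WP_r$; the converse direction is not used in this argument.
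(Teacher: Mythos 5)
Your proposal is correct and follows the same route as the paper: reduce via Proposition \ref{SubsetProp} to the combinatorial sum over set partitions satisfying conditions $(1)$, $(3)$, $(4)$, $(5)$, and then use the direction of Lemma \ref{PartSetCorr} asserting that any such admissible family $(J_i)$ yields a representation $\lambda=\lambda(a_{j\mu})$, so that $\lambda\notin\WP_r$ forces the sum to be empty and the limit to vanish. Your added remarks (empty sum equals $0$, only one direction of the correspondence is needed) are accurate and consistent with the paper's argument.
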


\begin{proof}
Lemma \ref{PartSetCorr} shows that if $\lambda\not\in \WP_r$, then no such subsets $J_1,\dots,J_t$ can exist that satisfy conditions $(1)$, $(3)$, $(4)$ and $(5)$ and so the sum in Proposition \ref{SubsetProp} is empty and hence $0$. 
\end{proof}

\subsection{Types of Set Partitions}

Similarly to how if $\lambda\in \WP_r$, then there could be multiple ways to generate it with elements of $P_r$, there could also be multiple different sets $J_1,\dots,J_t$ that correspond to a given decomposition into partitions of $r$ as described in Lemma \ref{PartSetCorr}.

For example, if $r=3$ then $P_3 = \{(1,1,1),(1,2),(3)\}$. Setting $a_{(1,2)}=1$, $a_{2(1,1,1)}=1$ and $a_{j\lambda}=0$ otherwise, we see that
$$\lambda(a_{j\mu}) = (1,2,2,2,2).$$
So, setting $n_1=1$, $n_2=n_3=n_4=n_5=2$, we get $4$ different sets $J_1,J_2$ that give the same decomposition into partitions of $3$. That is, we can choose
\begin{align*}
J_1 = \{1,2\}, J_2 = \{3,4,5\} \mbox{ or } J_1 = \{1,3\}, J_2 = \{2,4,5\} \mbox{ or } \\
J_1 = \{1,4\}, J_2 = \{2,3,5\} \mbox{ or } J_1 = \{1,5\}, J_2 = \{2,3,4\}.
\end{align*}

With this and Lemma \ref{PartSetCorr} in mind, we define what it means for a set partition to be of a certain type.

\begin{defn}
For a given $\lambda = (n_1,\dots,n_\ell)$ we will say a set partition $J_1,\dots,J_t$ of $\{1,\dots,\ell\}$ is of \textbf{type $(a_{j\mu})_{j,\mu}$} if it corresponds to a way to write $\lambda = \lambda(a_{j\mu})$ as described in Lemma \ref{PartSetCorr}. 

More concretely, the set partition is of type $(a_{j\mu})_{j,\mu}$ if
$$a_{j\mu} = \#\left\{ 1\leq i \leq t : j|n_k \mbox{ for all } k\in J_i \mbox{ and } \left(\frac{n_k}{j} : k\in J_i\right)= \mu   \right\}.$$
\end{defn}

We can now use the notion of types of set partitions to rewrite Proposition \ref{SubsetProp}. That is, recalling the statement of Proposition \ref{SubsetProp}, we get that 
\begin{align*}
    \lim_{q\to\infty} \left\langle q^{\frac{r-2}{r}|\lambda|} P_\lambda(\Theta_L) \right\rangle_{\F_r(N)} & = (-1)^{\ell} \sum_{t=1}^{\ell} \sideset{}{^{**}}\sum_{J_i} \prod_{i=1}^t \left(\frac{N_i}{r}\right)^{|J_i|-1} \\
    & = (-1)^{\ell} \sum_{\substack{ (a_{j\mu})_{j,\mu} \\ \lambda(a_{j\mu}) = \lambda }} \sum_{t=1}^{\ell} \sum_{\substack{ J_1,\dots,J_t \\ \mbox{ of type } (a_{j\mu})_{j,\mu} }} \prod_{i=1}^t \left(\frac{N_i}{r}\right)^{|J_i|-1}.
\end{align*}

Now, if $J_1,\dots,J_t$ is of type $(a_{j\mu})_{j,\mu}$, then there will be $a_{j\mu}$ sets $J_i$ such that $\frac{N_i}{r} = j$ and $|J_i|=\ell(\mu)$, the length of $\mu$, for all $j=1,\dots,\infty$ and all $\mu\in P_r$. In particular, we get that
$$\prod_{i=1}^t \left(\frac{N_i}{r}\right)^{|J_i|-1} = \prod_{j=1}^{\infty} \prod_{\mu\in S_r} j^{(\ell(\mu)-1)a_{j\mu}}$$
depends only on the type of the set partition. Hence, we get the immediate Proposition.

\begin{prop}\label{PartProp}
For any $N\geq r\geq 2$ such that $q\equiv 1 \bmod{2r}$ and any partition $\lambda$ with $|\lambda|<\frac{rN}{2r-2}$,
$$\lim_{q\to\infty} \left\langle q^{\frac{r-2}{r}|\lambda|}P_\lambda(\Theta_L)  \right\rangle_{\F_r(N)} =  (-1)^{\ell} \sum_{\substack{ (a_{j\mu})_{j,\mu} \\ \lambda(a_{j\mu}) = \lambda }} C_r(a_{j\mu}) \prod_{j=1}^{\infty} \prod_{\mu\in P_r} j^{(\ell(\mu)-1)a_{j\mu}}$$
where $C_r(a_{j\mu})$ is the number of set partitions of $\{1,\dots,\ell\}$ of type $(a_{j\mu})_{j,\mu}$.
\end{prop}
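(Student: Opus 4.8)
The plan is to combine the combinatorial bookkeeping already assembled in this section --- specifically the chain of identities culminating in Proposition \ref{SubsetProp} --- with the correspondence of Lemma \ref{PartSetCorr} and the definition of the type of a set partition. The starting point is the formula
$$\lim_{q\to\infty} \left\langle q^{\frac{r-2}{r}|\lambda|} P_\lambda(\Theta_L) \right\rangle_{\F_r(N)} = (-1)^{\ell} \sum_{t=1}^{\ell} \sideset{}{^{**}}\sum_{J_i} \prod_{i=1}^t \left(\frac{N_i}{r}\right)^{|J_i|-1},$$
valid under the hypothesis $|\lambda| < \frac{rN}{2r-2}$. First I would partition the set of all set partitions $J_1,\dots,J_t$ satisfying conditions $(1),(3),(4),(5)$ according to their type: by Lemma \ref{PartSetCorr} every such set partition corresponds to a way of writing $\lambda = \lambda(a_{j\mu})$, and by the \emph{definition} of type the tuple $(a_{j\mu})_{j,\mu}$ recording that decomposition is exactly the type. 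So the outer sum over $t$ and set partitions breaks as a double sum: an outer sum over tuples $(a_{j\mu})_{j,\mu}$ with $\lambda(a_{j\mu}) = \lambda$, and an inner sum over all set partitions of that fixed type.

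Next I would show that the summand $\prod_{i=1}^t (N_i/r)^{|J_i|-1}$ depends only on the type, not on the individual set partition. This is the observation already recorded in the paragraph preceding the statement: if $J_1,\dots,J_t$ has type $(a_{j\mu})_{j,\mu}$ then, reading off condition $(5)$ and Lemma \ref{PartLem1}, for each pair $(j,\mu)$ there are precisely $a_{j\mu}$ indices $i$ with $N_i/r = j$ and $|J_i| = \ell(\mu)$. Hence
$$\prod_{i=1}^t \left(\frac{N_i}{r}\right)^{|J_i|-1} = \prod_{j=1}^{\infty} \prod_{\mu\in P_r} j^{(\ell(\mu)-1)a_{j\mu}},$$
a quantity independent of the particular set partition of that type. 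The inner sum therefore collapses to this quantity times the number of set partitions of type $(a_{j\mu})_{j,\mu}$, which is by definition $C_r(a_{j\mu})$. Substituting gives exactly the claimed identity, so the Proposition follows.

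Honestly this statement is essentially a repackaging of work already done in the section, so there is no serious obstacle --- the ``proof'' is really just the act of organizing the preceding computations into one clean line and invoking the definition of $C_r(a_{j\mu})$. The one point that deserves a sentence of care is the interchange of summation order: one must note that the set partitions satisfying $(1),(3),(4),(5)$ are genuinely partitioned (not merely covered) by the types, i.e.\ every such set partition has exactly one type, which is immediate since the type is defined as a function of the set partition. With that remark in place the rest is formal. I would present the proof as a short paragraph quoting Proposition \ref{SubsetProp}, citing Lemma \ref{PartSetCorr} for the reindexing, citing the displayed identity above for the type-independence of the summand, and concluding by the definition of $C_r$.
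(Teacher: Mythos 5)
Your proposal is correct and follows essentially the same route as the paper: the paper likewise takes Proposition \ref{SubsetProp}, regroups the $^{**}$-sum over set partitions according to their type via Lemma \ref{PartSetCorr}, observes that $\prod_{i=1}^t (N_i/r)^{|J_i|-1} = \prod_{j,\mu} j^{(\ell(\mu)-1)a_{j\mu}}$ depends only on the type, and then counts the set partitions of each type by $C_r(a_{j\mu})$. Your extra remark that each set partition has exactly one type (so the regrouping is a genuine partition of the index set) is the same implicit point the paper relies on, so nothing is missing.
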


\subsection{Combinatorial Statement}

Therefore, to find a combinatorial formula for the left hand side of Theorem \ref{MainThm} it remains to determine a combinatorial formula for the coefficients $C_r(a_{j\mu})$.

\begin{lem}\label{BijLem}
For any tuple $(a_{j\mu})_{j,\mu}$ such that only finitely many are non-zero and any partition $\lambda = \lambda(a_{j\mu})$ there is a 
$$\prod_{j=1}^{\infty} \prod_{\mu\in P_r} \prod_{k=1}^r (\mu_k!)^{a_{j\mu}} \mbox{ to } 1$$
correspondence from bijections of multisets
$$\phi: \{\lambda\} \to \prod_{j=1}^{\infty} \prod_{\mu\in P_r} \{j\mu\}^{a_{j\mu}}$$ 
and set partitions of type $(a_{j\mu})_{j,\mu}$
where for every $\mu\in P_r$, we write $\mu=1^{\mu_1}2^{\mu_2}\cdots r^{\mu_r}$.
\end{lem}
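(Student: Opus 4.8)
The plan is to exhibit an explicit surjection $\Psi$ from the set of bijections to the set of type-$(a_{j\mu})$ set partitions and to check that every fibre has size $\prod_{j}\prod_{\mu\in P_r}\prod_{k=1}^r(\mu_k!)^{a_{j\mu}}$. First fix concrete models: realize $\{\lambda\}$ as the index set $\{1,\dots,\ell\}$ with $k$ carrying the value $n_k$, and realize the target as a disjoint union $T=\bigsqcup_{j}\bigsqcup_{\mu\in P_r}\bigsqcup_{c=1}^{a_{j\mu}}S_{j,\mu,c}$, where each $S_{j,\mu,c}$ is a labelled copy of the multiset of parts of $j\mu$; thus $|S_{j,\mu,c}|=\ell(\mu)$ and the value $kj$ occurs in $S_{j,\mu,c}$ with multiplicity $\mu_k$ for $k=1,\dots,r$. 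Since $\lambda=\lambda(a_{j\mu})$, the multiset of values carried by $T$ equals $\{\lambda\}$, so value-preserving bijections $\phi\colon\{\lambda\}\to T$ exist; given one, let $\Psi(\phi)$ be the set partition of $\{1,\dots,\ell\}$ whose blocks are the preimages $\phi^{-1}(S_{j,\mu,c})$.

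The first thing to verify is that $\Psi(\phi)$ has type $(a_{j\mu})$. The block $\phi^{-1}(S_{j,\mu,c})$ has value-multiset equal to the parts of $j\mu$; hence $j\mid n_k$ for every $k$ in it and, after dividing by $j$, these values form the multiset $\mu$ — exactly the condition in the definition of type. Conversely a single block can meet this condition for at most one pair $(j,\mu)$, since the sum of the values in the block equals $jr$ (determining $j$), after which dividing by $j$ determines $\mu$; here one uses crucially that $\mu$ must be a partition of $r$. So the $(j,\mu)$-multiplicities among the blocks of $\Psi(\phi)$ are exactly the $a_{j\mu}$. Surjectivity is then immediate: starting from a type-$(a_{j\mu})$ set partition, pair its $a_{j\mu}$ blocks of type $(j,\mu)$ with the copies $S_{j,\mu,c}$ and put any value-preserving bijection on each matched pair; assembling gives a preimage under $\Psi$.

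For the fibre count, fix a type-$(a_{j\mu})$ set partition $P$. A bijection $\phi$ with $\Psi(\phi)=P$ amounts to specifying, for each block $B$ of $P$, the value-preserving bijection $\phi|_B$ onto the copy $S_{j,\mu,c}$ that $B$ maps to; once that target copy is fixed, the only remaining freedom is to permute, within $B$, the $\mu_k$ elements sharing each common value $kj$, i.e.\ $\prod_{k=1}^r\mu_k!$ choices for a block of type $(j,\mu)$. Multiplying over all blocks produces the claimed factor $\prod_j\prod_{\mu\in P_r}\prod_{k=1}^r(\mu_k!)^{a_{j\mu}}$.

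The step I expect to be the main obstacle is this last one — specifically, keeping the bookkeeping of the $a_{j\mu}$ copies $S_{j,\mu,c}$ of a given $\{j\mu\}$ consistent between the two sides, i.e.\ being careful that the assignment of each block to a particular copy is part of the data encoded by "of type $(a_{j\mu})$" rather than an additional degree of freedom in the fibre; getting that right is exactly what fixes the constant in the correspondence. Everything else reduces to routine multiset bookkeeping and the elementary "sum determines $j$, then $j$ determines $\mu$" observation above.
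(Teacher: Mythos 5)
Your construction is the same as the paper's (blocks are the preimages $\phi^{-1}(S_{j,\mu,c})$ of the copies), and your added checks — that a block determines $(j,\mu)$ because its value-sum is $jr$, and that the map onto type-$(a_{j\mu})$ set partitions is surjective — are correct and welcome. The gap is in the fibre count, and it is exactly the point you flagged at the end but then resolved the wrong way. A set partition of type $(a_{j\mu})$ records only \emph{how many} blocks have each normalized shape $(j,\mu)$; it does not encode an assignment of those blocks to particular copies $S_{j,\mu,c}$. So in your model, where the target $T=\bigsqcup_{j,\mu,c}S_{j,\mu,c}$ has \emph{labelled} copies, a bijection $\phi$ in the fibre over a fixed partition $P$ consists of (i) a choice, for each $(j,\mu)$, of which of the $a_{j\mu}$ blocks of shape $(j,\mu)$ goes to which copy $S_{j,\mu,c}$ — that is $\prod_{j,\mu}a_{j\mu}!$ choices — and (ii) the within-block value-preserving bijections you counted, $\prod_k\mu_k!$ per block. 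Hence with labelled copies the correspondence is $\bigl(\prod_{j,\mu}a_{j\mu}!\prod_{k=1}^{r}(\mu_k!)^{a_{j\mu}}\bigr)$-to-$1$, not the stated $\prod_{j,\mu}\prod_k(\mu_k!)^{a_{j\mu}}$-to-$1$. A minimal check: $r=2$, $\lambda=(1,1,1,1)$, $a_{1,(1,1)}=2$; there are $4!=24$ value-preserving bijections onto two labelled copies of $\{1,1\}$ and $3$ set partitions into two pairs, so each fibre has size $8=2!\cdot(2!)^2$, while the lemma's factor is $(2!)^2=4$.

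The statement is only correct under the convention the paper actually uses: the target $\prod_{j,\mu}\{j\mu\}^{a_{j\mu}}$ is a multiset, so the $a_{j\mu}$ copies of $\{j\mu\}$ are indistinguishable and two bijections differing by a permutation of identical copies are identified (equivalently, one quotients your labelled count by $\prod_{j,\mu}a_{j\mu}!$). This convention is what makes the companion count of bijections equal to $\prod_j\lambda_j!/\prod_{j,\mu}a_{j\mu}!$ rather than $\prod_j\lambda_j!$, and together the two counts give the correct number of set partitions of a given type; your labelled bookkeeping would also give the right final answer if carried through consistently, but then both the bijection count and the fibre size differ from the lemma by the same factor $\prod_{j,\mu}a_{j\mu}!$. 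As written, your claim that ``the assignment of each block to a particular copy is part of the data encoded by `of type $(a_{j\mu})$''' is false, and without either that identification of bijections or the compensating factor, the proof of the stated constant does not go through.
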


\begin{proof}
For any map $\phi$, define the sets
$$J_{j\mu,i}(\phi) = \left\{k : \phi(n_k) \mbox{ lies in the $i$-th copy of $\{j\mu\}$}\right\}$$
for all $j=1,\dots,\infty$, $\mu\in P_r$ and $i=1,\dots,a_{j\mu}$. Then this defines a set partition of $\{1,\dots,\ell\}$ of type $(a_{j\mu})_{j,\mu}$. Moreover, if $\phi$ and $\widetilde{\phi}$ are two bijections then we see that $J_{j\mu,i}(\phi) = J_{j\mu,i}(\widetilde{\phi})$ if and only if $\widetilde{\phi}\phi^{-1}$ fixes the $i$-th copy of $\{j\mu\}$. We see that if $\mu=1^{\mu_1}2^{\mu_2}\cdots r^{\mu_r}$ then there are
$$\prod_{k=1}^r \mu_k!$$
ways to fix $\{j\mu\}$. Therefore, two bijections correspond to the same set partition if and only if they fix each copy of $\{j\mu\}$ for each $j$ and each $\mu$ of which there are 
$$\prod_{j=1}^{\infty} \prod_{\mu\in P_r} \prod_{k=1}^r (\mu_k!)^{a_{j\mu}}$$
such maps.
\end{proof}

\begin{lem}\label{NumBijs}
The number of bijections of multisets as in  Lemma \ref{BijLem} is
$$\frac{\prod_{j=1}^{\infty}\lambda_j!}{\prod_{j=1}^{\infty} {\prod_{\mu\in P_r} a_{j\mu}!}}$$
\end{lem}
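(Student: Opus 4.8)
The plan is to count the bijections of multisets
$$\phi: \{\lambda\} \to \prod_{j=1}^{\infty} \prod_{\mu\in P_r} \{j\mu\}^{a_{j\mu}}$$
directly by comparing multiplicities of each part on the two sides. First I would recall that $\{\lambda\}$ is the multiset $\{n_1,\dots,n_\ell\}$, and since $\lambda = 1^{\lambda_1}2^{\lambda_2}\cdots$, the integer $j$ appears in $\{\lambda\}$ with multiplicity $\lambda_j$. On the target side, $\{j\mu\}$ for $\mu = 1^{\mu_1}2^{\mu_2}\cdots r^{\mu_r}$ is the multiset in which the integer $jk$ appears with multiplicity $\mu_k$. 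So the number of bijections is the number of ways to match, for each part value, the elements of $\{\lambda\}$ having that value bijectively with the elements of the target having that value. For a fixed value $v$, this is a bijection between two sets of equal size (equality of size is guaranteed since $\lambda(a_{j\mu}) = \lambda$), so the count of such matchings over all values $v$ is $\prod_v (\text{mult}_v)! = \prod_{j=1}^\infty \lambda_j!$, because the distinguished parts of $\lambda$ are precisely the $n_i$ and the multiplicity of $j$ among them is $\lambda_j$.

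However, this overcounts. The target $\prod_{j}\prod_{\mu} \{j\mu\}^{a_{j\mu}}$ is a product of $a_{j\mu}$ \emph{identical} copies of $\{j\mu\}$ for each pair $(j,\mu)$, and a ``bijection of multisets'' into such a product should be counted up to permuting the identical copies — or, more precisely, the statement of Lemma \ref{NumBijs} (in light of how it is used together with Lemma \ref{BijLem} to produce $C_r(a_{j\mu})$) treats the copies as unordered. So I would next argue that the raw count $\prod_j \lambda_j!$ must be divided by the number of ways to permute the identical copies, namely $\prod_{j=1}^\infty \prod_{\mu\in P_r} a_{j\mu}!$. Thus
$$\#\{\phi\} = \frac{\prod_{j=1}^{\infty}\lambda_j!}{\prod_{j=1}^{\infty} \prod_{\mu\in P_r} a_{j\mu}!},$$
as claimed.

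Concretely, I would carry out the proof in three steps. Step one: compute the multiplicity generating data — establish that for each positive integer $v$, the multiplicity of $v$ in $\{\lambda\}$ equals $\sum_{jk = v} \sum_{\mu \in P_r} \mu_k\, a_{j\mu}$, which equals the multiplicity of $v$ in the (ordered) product of the target multisets, this being exactly the hypothesis $\lambda(a_{j\mu}) = \lambda$ unpacked value-by-value. Step two: count bijections respecting an \emph{ordered} labelling of the $a_{j\mu}$ copies; since a multiset bijection is the same as a family of bijections between the value-fibres, and each value-fibre has size $\lambda_v := \text{mult}_v(\lambda)$, the ordered count is $\prod_v \lambda_v!$; reindexing by $v = j$ (noting the distinguished parts of $\lambda$ are labelled by $j$ with multiplicity $\lambda_j$, i.e. the nonzero $\lambda_j$) gives $\prod_{j=1}^\infty \lambda_j!$. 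Step three: pass from ordered copies to unordered copies by dividing by $\prod_{j,\mu} a_{j\mu}!$, the order of the group permuting copies within each $(j,\mu)$-block, which acts freely on the set of ordered bijections.

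The main obstacle, and the point requiring the most care, is step three: making precise the sense in which the target is a product of \emph{unordered} identical copies and verifying that the copy-permutation action is free on ordered bijections (so that the division is exact, not merely an inequality). One must check that no ordered bijection $\phi$ is fixed by a nontrivial permutation of copies — this holds because the copies $\{j\mu\}$ and $\{j\mu\}$ are literally equal as multisets but their \emph{preimages} under $\phi$ (the sets $J_{j\mu,i}(\phi)$ of Lemma \ref{BijLem}) are disjoint hence distinct, so permuting copies genuinely changes $\phi$ — and then the orbit-counting is exact. I would want to phrase the statement so it dovetails cleanly with Lemma \ref{BijLem}: there the $\prod\prod\prod (\mu_k!)^{a_{j\mu}}$-to-$1$ map is from ordered-copy bijections, and here the count $\prod_j \lambda_j! \big/ \prod_{j,\mu} a_{j\mu}!$ should be consistent with whichever convention (ordered or unordered copies) is used downstream when computing $C_r(a_{j\mu})$; a remark reconciling the two conventions may be warranted.
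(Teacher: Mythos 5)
Your argument is essentially the paper's own proof: the paper likewise counts the $\prod_{j}\lambda_j!$ value-preserving bijections of $\{\lambda\}$ to itself (equivalently, bijections with the copies of $\{j\mu\}$ labelled) and then observes that precisely the $\prod_{j}\prod_{\mu\in P_r} a_{j\mu}!$ permutations of the identical copies yield the same multiset bijection $\phi$, giving the stated quotient. Your added check that the copy-permutation action is free, so the division is exact, is a detail the paper leaves implicit, but the route is the same.
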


\begin{proof}
We may view every bijection $\phi$ as a bijection $\psi$ from $\{\lambda\}$ to itself of which there are $\prod_j \lambda_j!$ of them. Now each of these bijections $\psi$ correspond to the same bijection $\phi$ if and only if they permute the $a_{j\mu}$ copies of $\{j\mu\}$ for each $j=1,\dots,\infty$ and $\mu\in P_r$. There are $\prod_{j=1}^{\infty} \prod_{\mu\in P_r} a_{j\mu}!$ ways of doing this and the result follows.
\end{proof}

This now leads to us being able to write down a purely combinatorial formula for the left hand side of Theorem \ref{MainThm}.

\begin{thm}\label{CombThm}
For any $N\geq r\geq 2$ such that $q\equiv 1 \bmod{2r}$ and any partition $\lambda=1^{\lambda_1}2^{\lambda_2}\cdots$ with $|\lambda|<\frac{rN}{2r-2}$, then
$$\lim_{q\to\infty} \left\langle q^{\frac{r-2}{r}|\lambda|}P_\lambda(\Theta_L)  \right\rangle_{\F_r(N)} =  \prod_{j=1}^{\infty} \left((-1)^{\lambda_j} \lambda_j!\right) \sum_{\substack{ (a_{j\mu})_{j,\mu} \\ \lambda(a_{j\mu}) = \lambda }}  \prod_{j=1}^{\infty} \prod_{\mu\in P_r} \frac{1}{a_{j\mu}!} \left( \frac{j^{\ell(\mu)-1}}{\prod_{k=1}^r \mu_k!} \right)^{a_{j\mu}} $$
where for any $\mu\in S_r$, we write $\mu = 1^{\mu_1}2^{\mu_2} \cdots r^{\mu_r}$.
\end{thm}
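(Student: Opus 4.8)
The plan is to start from Proposition \ref{PartProp}, which already expresses the limit as
$$(-1)^\ell \sum_{\substack{(a_{j\mu})_{j,\mu}\\ \lambda(a_{j\mu})=\lambda}} C_r(a_{j\mu}) \prod_{j=1}^\infty \prod_{\mu\in P_r} j^{(\ell(\mu)-1)a_{j\mu}},$$
so the only remaining task is to substitute an explicit formula for $C_r(a_{j\mu})$, the number of set partitions of type $(a_{j\mu})_{j,\mu}$. The two lemmas \ref{BijLem} and \ref{NumBijs} are designed precisely for this: Lemma \ref{BijLem} gives a $\prod_{j}\prod_{\mu}\prod_k (\mu_k!)^{a_{j\mu}}$-to-$1$ map from multiset bijections onto set partitions of the given type, and Lemma \ref{NumBijs} counts the multiset bijections as $\frac{\prod_j \lambda_j!}{\prod_j \prod_\mu a_{j\mu}!}$. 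Dividing, I would conclude
$$C_r(a_{j\mu}) = \frac{\prod_{j=1}^\infty \lambda_j!}{\prod_{j=1}^\infty \prod_{\mu\in P_r} a_{j\mu}! \prod_{k=1}^r (\mu_k!)^{a_{j\mu}}}.$$

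Next I would plug this into Proposition \ref{PartProp}. The factor $\prod_j \lambda_j!$ is independent of the tuple $(a_{j\mu})$, so it pulls outside the sum over tuples; together with $(-1)^\ell$ and the observation that $\ell = \sum_j \lambda_j$ (so $(-1)^\ell = \prod_j (-1)^{\lambda_j}$), this produces the prefactor $\prod_{j=1}^\infty \left((-1)^{\lambda_j}\lambda_j!\right)$. What remains inside the sum is
$$\prod_{j=1}^\infty \prod_{\mu\in P_r} \frac{1}{a_{j\mu}! \prod_{k=1}^r (\mu_k!)^{a_{j\mu}}} \cdot j^{(\ell(\mu)-1)a_{j\mu}} = \prod_{j=1}^\infty \prod_{\mu\in P_r} \frac{1}{a_{j\mu}!}\left(\frac{j^{\ell(\mu)-1}}{\prod_{k=1}^r \mu_k!}\right)^{a_{j\mu}},$$
which is exactly the claimed expression. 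So the proof is essentially a bookkeeping exercise in combining the already-established results.

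I do not expect a serious obstacle here, since the hard combinatorial content lives in Lemmas \ref{BijLem}, \ref{NumBijs}, and \ref{PartSetCorr}, all proved earlier. The one point that deserves care is the step $C_r(a_{j\mu}) = (\text{number of bijections})/(\text{fibre size})$: this is only legitimate if every fibre of the correspondence in Lemma \ref{BijLem} has the same cardinality $\prod_j\prod_\mu\prod_k (\mu_k!)^{a_{j\mu}}$, and that the correspondence is surjective onto set partitions of type $(a_{j\mu})$. Both are asserted in Lemma \ref{BijLem}, so I would simply cite it, but I would make explicit in the write-up that "$m$-to-$1$ correspondence" means a surjection with all fibres of size $m$, so the division is valid. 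A secondary cosmetic point is reconciling the index set $S_r$ (used in a couple of displays) with $P_r$ (the set of partitions of $r$ defined in \eqref{PSr}); these denote the same object and I would just use $P_r$ throughout.

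Concretely, the proof I would write is short:
\begin{proof}
By Lemmas \ref{BijLem} and \ref{NumBijs}, the number of set partitions of $\{1,\dots,\ell\}$ of type $(a_{j\mu})_{j,\mu}$ is
$$C_r(a_{j\mu}) = \frac{1}{\prod_{j=1}^\infty \prod_{\mu\in P_r}\prod_{k=1}^r (\mu_k!)^{a_{j\mu}}} \cdot \frac{\prod_{j=1}^\infty \lambda_j!}{\prod_{j=1}^\infty \prod_{\mu\in P_r} a_{j\mu}!} = \prod_{j=1}^\infty \lambda_j! \prod_{j=1}^\infty \prod_{\mu\in P_r} \frac{1}{a_{j\mu}! \prod_{k=1}^r (\mu_k!)^{a_{j\mu}}}.$$
Substituting this into Proposition \ref{PartProp}, using $\ell = \sum_{j} \lambda_j$ so that $(-1)^\ell = \prod_{j}(-1)^{\lambda_j}$, and noting that $\prod_j \lambda_j!$ does not depend on the tuple $(a_{j\mu})$, we obtain
$$\lim_{q\to\infty} \left\langle q^{\frac{r-2}{r}|\lambda|}P_\lambda(\Theta_L) \right\rangle_{\F_r(N)} = \prod_{j=1}^\infty \left((-1)^{\lambda_j}\lambda_j!\right) \sum_{\substack{(a_{j\mu})_{j,\mu}\\ \lambda(a_{j\mu})=\lambda}} \prod_{j=1}^\infty \prod_{\mu\in P_r} \frac{j^{(\ell(\mu)-1)a_{j\mu}}}{a_{j\mu}! \prod_{k=1}^r (\mu_k!)^{a_{j\mu}}},$$
and collecting the $\mu$-dependent terms into a single power $\left(\frac{j^{\ell(\mu)-1}}{\prod_{k=1}^r \mu_k!}\right)^{a_{j\mu}}$ gives the stated formula.
\end{proof}
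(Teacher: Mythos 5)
Your proposal is correct and follows exactly the paper's own argument: divide the bijection count of Lemma \ref{NumBijs} by the fibre size from Lemma \ref{BijLem} to get $C_r(a_{j\mu})$, then substitute into Proposition \ref{PartProp} using $\ell=\sum_j\lambda_j$. Your extra remarks about the fibres all having equal size and about the $S_r$/$P_r$ notation are reasonable polish but do not change the substance.
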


\begin{proof}

Lemmas \ref{BijLem} and \ref{NumBijs} imply that
$$C_r(a_{j\mu}) = \prod_{j=1}^{\infty} \frac{\lambda_j!}{\prod_{\mu\in S_r} a_{j\mu}! \prod_{k=1}^{r} (\mu_k!)^{a_{j\mu}} }.$$
The result now follows from Proposition \ref{PartProp} and the fact that $\ell = \sum_{j=1}^{\infty} \lambda_j$.
\end{proof}

\section{Deriving the Weight Function}\label{WtFuncSec}

\subsection{Arbitrary Valued Weight Function}

In Theorem \ref{CombThm}, we determined a combinatorial formula for the left hand side of the equation in Theorem \ref{MainThm}. So it remains to show that the right hand side, with the given weight function, matches. We first prove a Proposition that shows how one can find a weight function that will give you any values.

\begin{prop}\label{WeightProp}
Given a real number $A_{\lambda}$ for every partition $\lambda$ then the weight function defined by
$$\omega(U) := \sum_{\lambda} \frac{A_{\lambda}}{z_{\lambda}} P_{\lambda}(U) \quad \quad \mbox{where} \quad \quad z_{\lambda} = \prod_{j=1}^{\infty} j^{\lambda_j} \lambda_j!,$$
and the sum is over all partitions written in the form $\lambda = 1^{\lambda_1}2^{\lambda_2}\cdots$, has the property that for every $|\lambda|\leq N$
$$\int_{U(N)} P_{\lambda}(U) \overline{\omega(U)} dU = A_{\lambda}.$$
\end{prop}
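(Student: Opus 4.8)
The plan is to use the orthogonality of power sum symmetric functions under the Haar measure on $U(N)$, which is the matrix-integral analogue of the classical inner product on symmetric functions. Specifically, the key input is the Diaconis--Shahshahani / Diaconis--Evans result that for partitions $\lambda, \mu$ with $|\lambda|, |\mu| \leq N$ one has
\[
\int_{U(N)} P_\lambda(U) \overline{P_\mu(U)}\, dU = \delta_{\lambda\mu}\, z_\lambda,
\]
where $z_\lambda = \prod_j j^{\lambda_j}\lambda_j!$, and that $\int_{U(N)} P_\lambda(U)\,dU = 0$ for $1 \le |\lambda| \le N$. (This is exactly the regime in which the traces $\Tr(U^j)$ behave like independent complex Gaussians, and it is the result attributed to \cite{DE} in the excerpt.) Granting this, the proof is a one-line computation once the definition of $\omega$ is unwound.

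First I would fix a partition $\lambda$ with $|\lambda| \le N$ and substitute the defining series $\omega(U) = \sum_\mu \frac{A_\mu}{z_\mu} P_\mu(U)$ into the integral, taking the complex conjugate inside the (finite, for the relevant terms) sum:
\[
\int_{U(N)} P_\lambda(U) \overline{\omega(U)}\, dU = \sum_\mu \frac{A_\mu}{z_\mu} \int_{U(N)} P_\lambda(U)\overline{P_\mu(U)}\, dU.
\]
Then I would invoke the orthogonality relation. The only subtlety is that orthogonality as stated requires \emph{both} $|\lambda| \le N$ and $|\mu| \le N$; for terms with $|\mu| > N$ the integral $\int_{U(N)} P_\lambda \overline{P_\mu}\,dU$ need not vanish. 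So I would split the sum over $\mu$ into the part with $|\mu| \le N$, where orthogonality gives $\delta_{\lambda\mu} z_\mu$ and hence picks out exactly $A_\lambda$, and the part with $|\mu| > N$, which must be shown to contribute nothing.

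The main obstacle, then, is controlling the tail $\sum_{|\mu| > N} \frac{A_\mu}{z_\mu}\int_{U(N)} P_\lambda(U)\overline{P_\mu(U)}\,dU$. The resolution is that $\overline{P_\mu(U)} = \prod_j \overline{\Tr(U^j)}^{\mu_j} = \prod_j \Tr(U^{-j})^{\mu_j}$, and for $U \in U(N)$ the power sums $\Tr(U^{-j})$ are (up to the sign/involution $p_j \mapsto p_j$ in the relevant variables) polynomial in the $\Tr(U^{-k})$; more to the point, $P_\lambda(U)\overline{P_\mu(U)}$ is, as a function on $U(N)$, a $\Z$-linear combination of irreducible characters, and when $|\mu| > N \ge |\lambda|$ one checks that the trivial character cannot appear — equivalently, writing everything in terms of Schur functions via the dual Cauchy/Jacobi--Trudi identities, the product $s_\nu \cdot \overline{s_\rho}$ with $\ell(\nu) , \ell(\rho) \le N$ and $|\rho| > |\nu|$ has no constant term in its expansion in characters of $U(N)$. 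Hence each such integral vanishes and the tail drops out. This ``no trivial component'' statement is precisely (a mild extension of) what Diaconis--Evans prove, so I would either cite it directly or, if a self-contained argument is wanted, reduce to it by noting that $\overline{P_\mu(U)}$ for $U \in U(N)$ agrees with $P_\mu$ evaluated at the inverse eigenvalues and apply their orthogonality with the roles of $\lambda$ and $\mu$ and the $N$-dependence handled symmetrically. Once the tail is dispatched, the identity $\int_{U(N)} P_\lambda(U)\overline{\omega(U)}\,dU = A_\lambda$ follows immediately, completing the proof.
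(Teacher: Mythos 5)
Your proposal is correct and is essentially the paper's argument: substitute the defining series for $\omega$, apply the Diaconis--Evans orthogonality $\int_{U(N)} P_\lambda(U)\overline{P_\mu(U)}\,dU = \delta_{\lambda\mu} z_\lambda$, and dispose of all terms with $|\mu|\not=|\lambda|$ (in particular the tail $|\mu|>N$) via the fact, also taken from the proof of Theorem 2.1 in \cite{DE}, that the integral vanishes whenever $|\mu|\not=|\lambda|$. The paper simply cites that theorem in the form valid when $|\lambda|$ \emph{or} $|\mu|\leq N$ together with this unequal-degree vanishing, so the tail-splitting and the optional Schur-function justification you sketch collapse to the same one-line computation.
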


\begin{proof}

This follows almost immediately from Theorem 2.1 of \cite{DE} that states for any partitions $\lambda,\mu$ with $|\lambda|$ or $|\mu|\leq N$
\begin{align}\label{D-S}
\int_{U(N)} P_{\lambda}(U) \overline{P_{\mu}(U)} dU = \delta_{\lambda\mu} z_{\lambda}
\end{align}
where $\delta_{\lambda\mu}$ is $1$ if $\lambda=\mu$ and $0$ otherwise. Further, within the proof of Theorem 2.1 in \cite{DE} they show that the integral in \eqref{D-S} is always $0$ in the case that $|\mu|\not=|\lambda|$. Applying this result, we get that if $|\lambda|\leq N$ then
\begin{align*}
    \int_{U(N)} P_{\lambda}(U) \overline{\omega(U)} dU & = \sum_{\mu} \frac{A_\mu}{z_\mu} \int_{U(N)} P_\lambda(U) \overline{P_\mu(U)} dU \\
    & = \sum_{|\mu| =|\lambda| } \frac{A_\mu}{z_\mu} \delta_{\lambda\mu} z_\lambda \\
    & = A_\lambda
\end{align*}

\end{proof}

\subsection{Proof of Theorem \ref{MainThm}}

Theorem \ref{CombThm} and Proposition \ref{WeightProp} then implies that Theorem \ref{MainThm} is true with 
\begin{align}\label{omega1}
    \omega_{r}(U) & = \sum_{\lambda} \prod_{j=1}^{\infty} \left(\frac{-1}{j}\right)^{\lambda_j}\sum_{\substack{(a_{j\mu})_{j,\mu} \\ \lambda = \lambda(a_{j\mu}) }}  \prod_{j=1}^{\infty} \prod_{\mu\in P_r}\frac{1}{a_{j\mu}!} \left(\frac{j^{\ell(\mu)-1}}{\prod_{k=1}^r \mu_k!}\right)^{a_{j\mu}} P_\lambda(U)
\end{align}
and so it remains to show that this function has the desired equivalent definition. 

We first note that if $\lambda = \lambda(a_{j\mu})$ then 
\begin{align*}
    \{\lambda\} &= \{n_1,\dots,n_\ell\} = \bigcup_{j=1}^{\infty} \bigcup_{\mu\in P_r} \left(\{j\mu\}\cup\{j\mu\}\cup\dots\cup\{j\mu\} \right)
\end{align*}
so that if for every $\mu\in P_r$ we denote $\mu = (m_1(\mu),m_2(\mu),\cdots)$ then
$$P_{\lambda}(U) := \prod_{k=1}^{\ell} \Tr(U^{n_k})= \prod_{j=1}^{\infty}\prod_{\mu \in P_r} \left(\prod_{i=1}^{\ell(\mu)}  \Tr(U^{jm_i(\mu)})\right)^{a_{j\mu}} = \prod_{j=1}^{\infty} \prod_{\mu \in P_r} P^{a_{j\mu}}_{j\mu}(U).$$
Further, if we for every $\mu\in P_r$, we denote $\mu = 1^{\mu_1} 2^{\mu_2} \cdots r^{\mu_r}$ then we get that
$$\lambda = \lambda(a_{j\mu})= \prod_{j=1}^{\infty} \prod_{\mu\in P_r} (j\mu)^{a_{j\mu}} = \prod_{j=1}^{\infty} \prod_{\mu\in P_r} \prod_{k=1}^r (jk)^{\mu_ka_{j\mu}} = \prod_{J=1}^{\infty} J^{\sum_{k|J} \sum_{\mu\in P_r} \mu_k a_{\frac{J}{k}\mu} }.$$
Hence,
$$\lambda_j = \sum_{k|j} \sum_{\mu\in P_r} \mu_k a_{\frac{j}{k}\mu},$$
and therefore
\begin{align*}
    \prod_{j=1}^{\infty} \left(\frac{-1}{j}\right)^{\lambda_j} & = \prod_{j=1}^{\infty} \prod_{k|j} \sum_{\mu\in P_r} \left(\frac{-1}{j}\right)^{\mu_k a_{\frac{j}{k}\mu}} \\
    & = \prod_{j'=1}^{\infty} \prod_{k=1}^{r} \prod_{\mu\in P_r} \left(\frac{-1}{kj'} \right)^{\mu_k a_{j'\mu}} \\
    & = \prod_{j=1}^{\infty} \prod_{\mu\in P_r} \left(\frac{1}{j}\right)^{\ell(\mu)a_{j\mu}} \prod_{k=1}^r \left(\frac{-1}{k} \right)^{\mu_ka_{j\mu}}.
\end{align*}

For ease of notation, we will denote 
$$w(\mu) := \prod_{k=1}^r \frac{(-1)^{\mu_k}} {k^{\mu_k}\mu_k!}.$$
Plugging this into \eqref{omega1}, we find that
\begin{align*}
    \omega_r(U) & = \sum_{\lambda} \sum_{\substack{(a_{j\mu})_{j\mu} \\ \lambda = \lambda(a_{j\mu}) }} \prod_{j=1}^{\infty} \prod_{\mu\in P_r} \frac{1}{a_{j\mu}!} \left( \frac{w(\mu) P_{j\mu}(U)}{j} \right)^{a_{j\mu}} = \prod_{j=1}^{\infty} \prod_{\mu\in P_r} \sum_{a_{j\mu}=0}^{\infty}\frac{1}{a_{j\mu}!} \left(  \frac{w(\mu) P_{j\mu}(U)}{j} \right)^{a_{j\mu}} \\
    & = \prod_{j=1}^{\infty} \prod_{\mu\in P_r} \exp\left( \frac{w(\mu)P_{j\mu}(u)}{j} \right) =  \exp\left( \sum_{j=1}^{\infty} \frac{1}{j} \sum_{\mu\in P_r} w(\mu)P_{j\mu}(U) \right).
\end{align*}

We first note that 
\begin{align}\label{Pjmu}
P_{j\mu}(U) = \prod_{k=1}^r \Tr(U^{jk})^{\mu_k} = P_{\mu}(U^j)
\end{align}
so it is enough to solve the inner sum in the exponential for $j=1$ and $U$ arbitrary.

\begin{lem}\label{PartSumLem}
For any $r$ and any $N\times N$ unitary matrix $U$ with eigenvalues $x_1,\dots,x_N$, we have
$$\sum_{\mu\in P_r} w(\mu) P_{\mu}(U) = (-1)^r \sum_{1\leq i_1<\cdots<i_r\leq N } x_{i_1}x_{i_2}\cdots x_{i_r}.$$
Note that the right hand side is exactly the elementary symmetric polynomial $e_r(x_1,\dots,x_N)$.
\end{lem}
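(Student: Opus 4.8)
The plan is to prove the identity by recognizing the left-hand side as the coefficient extraction in a generating-function identity for symmetric functions, specifically the classical relation between power sums and elementary symmetric polynomials. Recall that $P_\mu(U) = \prod_k p_k(x)^{\mu_k}$ where $p_k(x) = \Tr(U^k) = \sum_i x_i^k$ is the $k$-th power sum, and that the weight $w(\mu) = \prod_{k=1}^r \frac{(-1)^{\mu_k}}{k^{\mu_k}\mu_k!}$ factors over the parts of $\mu$. First I would observe that summing $w(\mu)P_\mu(U)$ over all partitions $\mu$ of $r$ is exactly the coefficient of $z^r$ in a product of exponentials: since a partition of $r$ with $\mu_k$ parts equal to $k$ contributes $\prod_k \frac{1}{\mu_k!}\left(\frac{-p_k}{k}\right)^{\mu_k} z^{k\mu_k}$, we get
\begin{align*}
\sum_{r=0}^{\infty} z^r \sum_{\mu\in P_r} w(\mu) P_\mu(U) = \prod_{k=1}^{\infty} \sum_{\mu_k=0}^{\infty} \frac{1}{\mu_k!}\left(\frac{-p_k z^k}{k}\right)^{\mu_k} = \exp\left(-\sum_{k=1}^{\infty} \frac{p_k z^k}{k}\right).
\end{align*}

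The key step is then to identify this exponential. The standard symmetric function identity gives $\sum_{k\geq 1} \frac{p_k z^k}{k} = -\sum_{i}\log(1 - x_i z) = -\log\prod_i(1-x_iz)$, so that
\begin{align*}
\exp\left(-\sum_{k=1}^{\infty}\frac{p_k z^k}{k}\right) = \prod_{i=1}^N (1 - x_i z) = \sum_{r=0}^{N} (-1)^r e_r(x_1,\dots,x_N) z^r,
\end{align*}
where $e_r$ is the $r$-th elementary symmetric polynomial. Comparing coefficients of $z^r$ yields $\sum_{\mu\in P_r} w(\mu)P_\mu(U) = (-1)^r e_r(x_1,\dots,x_N) = (-1)^r \sum_{1\leq i_1<\cdots<i_r\leq N} x_{i_1}\cdots x_{i_r}$, which is exactly the claim.

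I would present this either via the generating function manipulation above, or — if a more self-contained argument is preferred — by directly invoking the Newton–Girard formula $p_k - e_1 p_{k-1} + \cdots + (-1)^{k-1} e_{k-1} p_1 + (-1)^k k e_k = 0$ and the fact that $(-1)^r e_r = \sum_{\mu\vdash r} \prod_k \frac{(-1)^{\mu_k}}{k^{\mu_k}\mu_k!} p_k^{\mu_k}$ is the textbook expression of $e_r$ in the power-sum basis (e.g. via the exponential formula / the determinantal formula for $e_r$ in terms of the $p_k$). The only mild subtlety — and the place to be careful — is bookkeeping: checking that the combinatorial weight $w(\mu)$ as defined in the paper matches the coefficient appearing in the standard power-sum expansion of $e_r$, including getting the overall sign $(-1)^r$ right (note $\sum_k \mu_k \not\equiv r$ in general, so the sign in $w(\mu)$ is $(-1)^{\sum_k \mu_k}$, not $(-1)^r$; the missing factor is supplied precisely because $\sum_k k\mu_k = r$ forces the generating-function identity to carry the clean sign $(-1)^r$ after summation). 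I do not expect any genuine obstacle here; this is a classical identity and the proof is a short generating-function computation once the notation is unwound.
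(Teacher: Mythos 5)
Your proof is correct and follows essentially the same route as the paper: both expand $\exp\bigl(-\sum_{k\geq 1} p_k/k\bigr)$ (the paper by summing $w(\mu)P_\mu(U)$ over \emph{all} partitions and extracting the homogeneous degree-$r$ part, you by inserting a bookkeeping variable $z$ and taking the coefficient of $z^r$) and identify it with $\prod_i(1-x_i)$, i.e.\ the elementary symmetric polynomials. Your variant with $z$ is, if anything, a slightly cleaner way to perform the degree-$r$ extraction, but it is the same argument.
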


\begin{proof}
First we will calculate the sum over all partitions. That is,
\begin{align*}
    \sum_{\mu} w(\mu) P_\mu(U) & = \sum_{\mu_1,\mu_2,\cdots } \prod_{k=1}^{\infty} \frac{(-1)^{\mu_k}}{k^{\mu_k}\mu_k!} \Tr(U^k)^{\mu_k}  = \prod_{k=1}^{\infty} \sum_{\mu_k=0}^{\infty} \frac{1}{\mu_k!} \left(\frac{-\Tr(U^k)}{k}\right)^{\mu_k} \\
    & = \prod_{k=1}^{\infty} \exp\left( \frac{-\Tr(U^k)}{k} \right)  = \exp\left(-\sum _{k=1}^{\infty} \frac{\Tr(U^k)}{k}\right)
\end{align*}

Now, we find that
\begin{align*}
    -\sum_{k=1}^{\infty} \frac{\Tr(U^k)}{k} & = -\sum_{k=1}^{\infty} \frac{1}{k} \sum_{i=1}^N x_i^k  = - \sum_{i=1}^N \sum_{k=1}^{\infty} \frac{x_i^k}{k} \\
    & = \sum_{i=1}^N \log(1-x_i) = \log\left(\prod_{i=1}^k (1-x_i)\right)
\end{align*}

To conclude we get that 
$$\sum_{\mu} w(\mu)P_\mu(U) = \prod_{i=1}^N (1-x_i)$$
and $\sum_{\mu\in P_r} w(\mu)P_\mu(U)$ is the homogenous part of degree $r$, which is the elementary symmetric polynomial
$$e_r(x_1,\dots,x_N) = (-1)^r \sum_{1\leq i_1<\cdots<i_r\leq N} x_{i_1}x_{i_2}\cdots x_{i_r}.$$

\end{proof}

Applying \eqref{Pjmu} and Lemma \ref{PartSumLem} to the sum inside the exponential of $\omega_r(U)$, we find that
\begin{align*}
    \sum_{j=1}^{\infty} \frac{1}{j} \sum_{\mu\in P_r} \omega(\mu)P_{j\mu}(U) & = (-1)^r \sum_{j=1}^{\infty} \frac{1}{j} \sum_{1\leq i_1< \cdots < i_r\leq N} x^j_{i_1}x^j_{i_2}\cdots x^j_{i_r} \\
    & = (-1)^r \sum_{1<i_1<\cdots<i_r\leq N} \sum_{j=1}^{\infty} \frac{(x_{i_1}x_{i_2}\cdots x_{i_r})^j}{j} \\
    & = (-1)^{r+1} \sum_{1\leq i_1<\cdots<i_r\leq N} \log\left( 1 - x_{i_1}x_{i_2}\cdots x_{i_r} \right)
\end{align*}

Therefore, we conclude the proof of Theorem \ref{MainThm} with the final calculation that
\begin{align*}
    \omega_r(U) & = \exp\left(\sum_{j=1}^{\infty} \frac{1}{j} \sum_{\mu\in P_r} w(\mu)P_{j\mu}(U)\right) \\
    & = \exp\left((-1)^{r+1} \sum_{1\leq i_1<\cdots < i_r\leq N} \log(1-x_{i_1}x_{i_2}\cdots x_{i_r})\right)\\
    & = \prod_{1\leq i_1<\cdots <i_r\leq N} \left(1-x_{i_1}x_{i_2}\cdots x_{i_r}\right)^{(-1)^{r+1}}.
\end{align*}

\section{Symplectic and Orthogonal Weight Functions}

\subsection{Symplectic Weight Function}\label{SympWtSec}

Diaconis and Shahshahani \cite{DS} show that for a partition $\lambda=1^{\lambda_1}2^{\lambda_2}\cdots k^{\lambda_k}$ such that $\lambda_1+\lambda_2+\cdots+\lambda_k \leq N$, then
\begin{align}\label{DSSymp}
    \int_{USp(2N)} P_{\lambda}(U) dU = \prod_{j=1}^k (-1)^{(j-1)\lambda_j} g_j(\lambda_j)
\end{align}
where
\begin{align}\label{gjfunc}
g_j(\lambda) = \begin{cases} 0 & j \mbox{ and } \lambda \mbox{ is odd} \\ \left(\frac{j}{2}\right)^{\frac{\lambda}{2}}\frac{\lambda!}{(\lambda/2)!} & j \mbox{ is odd and } \lambda \mbox{ is even} \\ \sum_k \binom{\lambda}{2k} \left(\frac{j}{2}\right)^k \frac{(2k)!}{k!} & j \mbox{ is even} \end{cases}.
\end{align}
Diaconis and Evans \cite{DE} then sketch an argument as to how to extend this result to all $\lambda$ such that $|\lambda|\leq N$. 

\begin{rem}
We can now conclude that the well known product on the right hand side of \ref{DSSymp} can be written in terms of partitions of $2$ as in the right hand side of Theorem \ref{CombThm} by using Theroem \ref{MainThm} and the result of Katz and Sarnak mentioned in \eqref{KSr=2}. However, one is able to prove this directly in a fun and interesting combinatorial exercise. 
\end{rem}

Thus, we see that Theorem \ref{ThmSymp} follows from Proposition \ref{WeightProp} by setting 
\begin{align*}
    w_{Sp}(U)  = \sum_{\lambda} P_{\lambda}(U) \prod_{j=1}^{\infty} \frac{(-1)^{(j-1)\lambda_j}g_j(\lambda_j)}{j^{\lambda_j}\lambda_j!} 
\end{align*}
Noting that 
$$P_{\lambda}(U) = \prod_{j=1}^{\infty} \Tr(U^j)^{\lambda_j} = \prod_{j=1}^{\infty} P^{\lambda_j}_{(j)}(U),$$
where $(j)$ is the partition of $j$ consisting solely of $j$, we get that
\begin{align*}
    w_{Sp}(U) & = \sum_{\lambda} \prod_{j=1}^{\infty} \frac{(-1)^{(j-1)\lambda_j}g_j(\lambda_j)}{j^{\lambda_j}\lambda_j!} P^{\lambda_j}_{(j)}(U) \\
    & = \prod_{j=1}^{\infty} \sum_{\lambda_j=0}^{\infty} \frac{(-1)^{(j-1)\lambda_j}g_j(\lambda_j)}{j^{\lambda_j}\lambda_j!} P^{\lambda_j}_{(j)}(U). 
\end{align*}

Now, if $j$ is odd,
\begin{align*}
    \sum_{\lambda_j=0}^{\infty} \frac{(-1)^{(j-1)\lambda_j}g_j(\lambda_j)}{j^{\lambda_j}\lambda_j!} P^{\lambda_j}_{(j)}(U) & =  \sum_{\lambda_j=0}^{\infty} \frac{1}{\lambda_j!} \left(\frac{P^2_{(j)}(U)}{2j}\right)^{\lambda_j} = \exp\left( \frac{P^2_{(j)}(U)}{2j} \right) 
\end{align*}
while, if $j$ is even,
\begin{align*}
    \sum_{\lambda_j=0}^{\infty} \frac{(-1)^{(j-1)\lambda_j}g_j(\lambda_j)}{j^{\lambda_j}\lambda_j!} P^{\lambda_j}_{(j)}(U) & = \sum_{\lambda_j=0}^{\infty} \frac{(-1)^{\lambda_j}P^{\lambda_j}_{(j)}(U)}{j^{\lambda_j}\lambda_j!} \sum_{k=0}^{\infty} \binom{\lambda_j}{2k} \left(\frac{j}{2}\right)^k \frac{(2k)!}{k!}  \\
    & = \sum_{k=0}^{\infty} \frac{1}{k!} \left(\frac{j}{2}\right)^k \sum_{\lambda_j=2k}^{\infty}  \frac{1}{(\lambda_j-2k)!} \left(\frac{-P_{(j)}(U)}{j}\right)^{\lambda_j} \\
    & = \sum_{k=0}^{\infty} \frac{1}{k!} \left(\frac{P^2_{(j)}}{2j}\right)^k \sum_{\lambda_j=0}^{\infty} \frac{1}{\lambda_j!} \left(\frac{-P_{(j)}}{j}\right)^{\lambda_j} \\
    & = \exp\left( \frac{P^2_{(j)} - 2P_{(j)}}{2j} \right).
\end{align*}

Plugging this in, we then get
\begin{align*}
    w_{Sp}(U) & = \prod_{j \mbox{ odd}} \exp\left( \frac{P^2_{(j)}(U)}{2j} \right)  \prod_{j \mbox{ even}} \exp\left( \frac{P^2_{(j)} - 2P_{(j)}}{2j} \right) \\
    & = \exp\left( \frac{1}{2} \sum_{j=1}^{\infty} \frac{P_{(j)}^2(U) - P_{(2j)}(U)}{j} \right).
\end{align*}

Finally, we get find that
\begin{align*}
    P_{(j)}^2(U) - P_{(2j)}(U) = \left(\sum_{i=1}^N x^j_i\right)^2 - \sum_{i=1}^N x_i^{2j} = 2 \sum_{1\leq i<k\leq N} x^j_ix_k^j
\end{align*}
to conclude that
\begin{align*}
    w_{Sp}(U) & = \exp\left( \sum_{1\leq i<k\leq N} \sum_{j=1}^{\infty} \frac{(x_ix_k)^j}{j} \right) \\
    & = \exp\left( - \sum_{1\leq i<k\leq N} \log(1-x_ix_k) \right) \\
    & = \prod_{1\leq i < k \leq N} \frac{1}{1-x_ix_k} \\
    & = w_2(U)
\end{align*}
which completes the proof of Theorem \ref{ThmSymp}.

\subsection{Orthogonal Weight Function}\label{OrthWtSec}

Similarly Diaconis and Shahshahani \cite{DS} and then extended by Diaconis and Evans \cite{DE} showed that for every partition $\lambda$ with $|\lambda|\leq N$, we have
\begin{align}\label{DSORth}
    \int_{USp(2N)} P_{\lambda}(U) dU = \prod_{j=1}^k  g_j(\lambda_j)
\end{align}
where $g_j(\lambda)$ is as defined in \ref{gjfunc}. Hence, the exact same computations as in Section \ref{SympWtSec} will carry over. The only difference is that when $j$ is even we will get a factor of
$$\exp\left( \frac{P_{(j)}^2+2P_{(j)}}{2j} \right).$$

Thus we get
$$w_O(U) = \exp\left( \frac{1}{2} \sum_{j=1}^{\infty} \frac{P_{(j)}^2(U) + P_{(2j)}(U)}{j} \right)$$
and
\begin{align*}
    P_{(j)}^2(U) + P_{(2j)}(U) = \left(\sum_{i=1}^N x^j_i\right)^2 + \sum_{i=1}^N x_i^{2j} = 2 \sum_{1\leq i\leq k\leq N} x^j_ix_k^j
\end{align*}
from whence we may conclude
$$w_O(U) = \prod_{1\leq i\leq k\leq N} \frac{1}{1-x_ix_k}$$
proving Theorem \ref{ThmOrth}.

\bibliography{main}

\newcommand{\etalchar}[1]{$^{#1}$}
\begin{thebibliography}{BCD{\etalchar{+}}18}

\bibitem[AK12]{AK}
JC~Andrade and JP~Keating.
\newblock The mean value of l (12, $\chi$) in the hyperelliptic ensemble.
\newblock {\em Journal of Number Theory}, 132(12):2793--2816, 2012.

\bibitem[BCD{\etalchar{+}}18]{BCDGL}
Alina Bucur, Edgar Costa, Chantal David, Joao Guerreiro, and DAVID LOWRY-DUDA.
\newblock Traces, high powers and one level density for families of curves over
  finite fields.
\newblock In {\em Mathematical Proceedings of the Cambridge Philosophical
  Society}, volume 165, pages 225--248. Cambridge University Press, 2018.

\bibitem[BF18a]{BF}
H~Bui and A~Florea.
\newblock Zeros of quadratic dirichlet l-functions in the hyperelliptic
  ensemble.
\newblock {\em Transactions of the American Mathematical Society},
  370(11):8013--8045, 2018.

\bibitem[BF18b]{BF2}
HM~Bui and A~Florea.
\newblock Hybrid euler-hadamard product for quadratic dirichlet l--functions in
  function fields.
\newblock {\em Proceedings of the London Mathematical Society}, 117(1):65--99,
  2018.

\bibitem[CP19]{CP}
Peter~J Cho and Jeongho Park.
\newblock Low-lying zeros of cubic dirichlet l-functions and the ratios
  conjecture.
\newblock {\em Journal of Mathematical Analysis and Applications},
  474(2):876--892, 2019.

\bibitem[DE01]{DE}
P~Diaconis and S~Evans.
\newblock Linear functionals of eigenvalues of random matrices.
\newblock {\em Transactions of the American Mathematical Society},
  353(7):2615--2633, 2001.

\bibitem[DFL19]{DFL}
Chantal David, Alexandra Florea, and Matilde Lalin.
\newblock The mean values of cubic l-functions over function fields.
\newblock {\em arXiv preprint arXiv:1901.00817}, 2019.

\bibitem[DS94]{DS}
P~Diaconis and M~Shahshahani.
\newblock On the eigenvalues of random matrices.
\newblock {\em Journal of Applied Probability}, 31(A):49--62, 1994.

\bibitem[EP21]{EP}
Alexei Entin and Noam Pirani.
\newblock Local statistics for zeros of artin-schreier l-functions.
\newblock {\em arXiv preprint arXiv:2107.02131}, 2021.

\bibitem[KS99]{KS}
N~Katz and P~Sarnak.
\newblock {\em Random matrices, Frobenius eigenvalues, and monodromy},
  volume~45.
\newblock American Mathematical Soc., 1999.

\bibitem[Mei20]{Meis}
Patrick Meisner.
\newblock Lower order terms for expected value of traces of frobenius of a
  family of cyclic covers of $\mathbb{P}^1_{\mathbb{f}_q}$ and one-level
  densities.
\newblock {\em arXiv preprint arXiv:2006.16886}, 2020.

\bibitem[Mil04]{Mil}
S~Miller.
\newblock One-and two-level densities for rational families of elliptic curves:
  evidence for the underlying group symmetries.
\newblock {\em Compositio Mathematica}, 140(4):952--992, 2004.

\bibitem[{\"O}S93]{OS}
Ali~E {\"O}zl{\"u}k and Chip Snyder.
\newblock Small zeros of quadratic l-functions.
\newblock {\em Bulletin of the Australian Mathematical Society},
  47(2):307--319, 1993.

\bibitem[Ram95]{Ram}
A~Ram.
\newblock Characters of brauer’s centralizer algebras.
\newblock {\em Pacific journal of Mathematics}, 169(1):173--200, 1995.

\bibitem[Ros02]{Rose}
Michael Rosen.
\newblock {\em Number theory in function fields}, volume 210.
\newblock Springer Science \& Business Media, 2002.

\bibitem[Rud10]{Rud}
Ze{\'e}v Rudnick.
\newblock Traces of high powers of the frobenius class in the hyperelliptic
  ensemble.
\newblock {\em Acta Arithmetica}, 143(1):81--99, 2010.

\bibitem[Sou00]{Sound}
Kannan Soundararajan.
\newblock Nonvanishing of quadratic dirichlet l-functions at s= 1/2.
\newblock {\em Annals of Mathematics}, 152(2):447--488, 2000.

\bibitem[SST16]{SST}
P~Sarnak, SW~Shin, and N~Templier.
\newblock Families of l-functions and their symmetry.
\newblock In {\em Families of automorphic forms and the trace formula}, pages
  531--578. Springer, 2016.

\bibitem[Wei48]{wei}
A~Weil.
\newblock Sur les courbes al{\'e}gbriques et les vari{\'e}t{\'e}s qui s'en
  d{\'e}duisent.
\newblock {\em Actualiti{\'e}s Sci. Ind.}, (1041), 1948.

\bibitem[You06]{You}
M~Young.
\newblock Low-lying zeros of families of elliptic curves.
\newblock {\em Journal of the American Mathematical Society}, 19(1):205--250,
  2006.

\end{thebibliography}
\bibliographystyle{alpha}

\end{document}